\newtheorem{theorem}{Theorem}
\newtheorem{lemma}{Lemma}
\newtheorem{remark}{Remark}
\theoremstyle{remark}
\newcommand{\be}[1]{\begin{equation}\label{#1}} 
\newcommand{\ee}{\end{equation}}
\title[Transverse and mixed ray transforms]{Invertibility of local geodesic transverse and mixed ray transforms I: basic cases}
\subjclass[2010]{53C22, 53C65} 
\keywords{integral geometry, tensor fields, scattering calculus, foliation condition}
    \author[G. Uhlmann]{Gunther Uhlmann}
\address{G. Uhlmann: Department of Mathematics, University of Washington, Seattle, WA 98195, USA; Institute for Advanced Study, 
The Hong Kong University of Science and Technology, Kowloon, Hong Kong, China (\tt{gunther@math.washington.edu})
}
  \author[J. Zhai]{Jian Zhai}
\address{J. Zhai: School of Mathematical Sciences, Fudan University, 220 Handan Road, Shanghai 200433, China
  (\tt{jianzhai@fudan.edu.cn}).}
\thanks{J. Zhai is supported by National Key Research and Development Programs of China (No. 2023YFA1009103), Science and Technology Commission of Shanghai Municipality (23JC1400501)}
\begin{document}

\begin{abstract}
Consider a compact Riemannian manifold in dimension $n\geq 3$ with strictly convex boundary. We show that the transverse ray transform of $1$ tensors and the mixed ray transform of $1+1$ tensors are invertible, up to natural obstructions, near a boundary point. When the manifold admits a strictly convex function, this local invertibility result leads to a global result by a layer stripping argument.
\end{abstract}

\maketitle 
\section{Introduction}
Let $M$ be an $n$ dimensional manifold. We use the notation $T_pM$ for the tangent space of $M$ at $p\in M$, $T_p^*M$ for the cotangent space. Denote 
\[
T^k_\ell TM=\coprod_{p\in M}\underbrace{T_pM\otimes\cdots\otimes T_pM}_{k}\otimes\underbrace{T_p^*M\otimes\cdots\otimes T_p^*M}_{\ell},
\]
for the bundle of $k$-contravariant, $\ell$-covariant tensors, where $\coprod$ denotes the disjoint union. Under local coordinates $(x^1,x^2,\cdots, x^n)$, each smooth section $f$ of $T^k_\ell TM$ can be expressed as
\[
f(p)=f^{i_1\cdots i_k}_{j_1\cdots j_\ell}(p)\partial_{x^{i_1}}\otimes\cdots\otimes\partial_{x^{i_k}}\otimes\mathrm{d}x^{j_1}\otimes\cdots\otimes\mathrm{d}x^{j_\ell},
\]
where $f^{i_1\cdots i_k}_{j_1\cdots j_\ell}$ is a smooth function on $M$. Here and throughout the paper, we use the Einstein's summation convention. If $f^{i_1\cdots i_k}_{j_1\cdots j_\ell}$ is symmetric in the indices $(i_1\cdots i_k)$ and also in the indices $(j_1\cdots j_\ell)$, then $f$ is called a symmetric $(k,\ell)$ tensors. We denote $C^\infty(S^k_\ell TM)$ to be the space of symmetric smooth $(k,\ell)$ tensors.\\

Now assume $(M,g)$ is an $n$ dimensional compact Riemannian manifold with smooth boundary $\partial M$ and the dimension $n\geq 3$. Denote $SM$ for the unit sphere bundle, i.e., $SM=\{(x,\xi)\in TM; |\xi|_g=1\}$.
The mixed ray transform $L_{k,\ell}f$ of $f\in C^\infty(S^k_\ell TM)$ is defined by the formula
\begin{equation}\label{defn_mixedraytransfrom}
(L_{k,\ell}f)(\gamma,\eta)=\int f^{i_1\cdots i_k}_{j_1\cdots j_\ell}(\gamma(t))\eta(t)_{i_1}\cdots\eta(t)_{i_k}\dot{\gamma}(t)^{j_1}\cdots\dot{\gamma}(t)^{j_\ell}\mathrm{d}t,
\end{equation}
where $\gamma(t)$ is a geodesic in $(M,g)$ with two end points on $\partial M$, and $\eta$ is a unit parallel (co)vector field along $\gamma$ and $\eta$ is conormal to $\dot{\gamma}$. To be more precise, $\eta:\gamma\rightarrow T^*M$ such that
\[
\eta(t)\in \dot{\gamma}(t)^\perp=\{\eta\in T_{\gamma(t)}^*M\vert\langle\eta,\dot{\gamma}(t)\rangle=0\},\quad |\eta(t)|_g=1,\quad \eta\text{ is parallel along } \gamma.
\]
We remark here that the definition \eqref{defn_mixedraytransfrom} is different than that given in \cite{Shara} and \cite{de2021generic}, but it is easy to see that they are equivalent. For $\ell=0$, we use the notation $T_k=L_{k,0}$ as in \cite{Shara} to be the so-called transverse ray transform. Explicitly,
\[
(T_kf)(x,\gamma,\eta)=\int f^{i_1\cdots i_k}(\gamma(t))\eta(t)_{i_1}\cdots\eta(t)_{i_k}\mathrm{d}t,
\]

For $k=0$, the geodesic ray transform $I_\ell$ is called the longitudinal ray transform. When the background metric $g$ is Euclidean, $I_0$ is the X-ray transform, which is the basis of \textit{Computerized Tomography} (CT). The case $\ell=1$ comes from the geodesic Doppler transform. The case $\ell=2$ arises from the linearization of the boundary rigidity problem, which reduces to the case $\ell=0$ if restricting to the fixed conformal class. The case $\ell=4$ is related to the travel time tomography problem for quasi-$P$ waves in weakly anisotropic elastic media \cite{vcerveny1982linearized, Shara}.


The invertibility of $I_\ell=L_{0,\ell}$ has been studied quite extensively \cite{Muk2,Muk1,AR,Shara,stefanov2004stability,dairbekov2006integral,paternain2013tensor,paternain2015invariant,stefanov2018inverting} under the assumption that $(M,g)$ is simple. We refer to \cite{de2021generic} for a review. For $k>1$, it is easy to see that potential fields, i.e., fields $f=\mathrm{Sym}\nabla u$ with $u\in C^\infty(S_\ell TM)$, $u\vert_{\partial M}=0$, are in the kernel of $I_\ell$. Here $\mathrm{Sym}$ is the symmetrization operator and $\nabla$ is the Levi-Civita connection induced by $g$.\\

The case $k=1,\ell=0$ corresponds to the \textit{Polarization Tomography} \cite{Shara,Holman}. For $k=2,\ell=2$, the problem arises from the linearization the travel-time tomography problem or inverse boundary value problem for quasi-\textit{S} waves in weakly anisotropic elastic media \cite{Shara, de2021generic}.
We will study the invertibility of $T_1$ and $L_{1,1}$ in this paper, and treat $T_2$ and $L_{2,2}$ in a subsequent paper.

For $n\geq 3$, the kernel of $T_k$ is proved to be trivial when $g=g_0$ is Euclidean, and for $k<n$ when $(M,g)$ is simple and the curvature is close to zero \cite{Shara}. We remark here that when $n=2$, the transverse ray transform $T_k$ has a kernel.\\

The invertibility of the mixed ray transform has a natural obstruction. To characterize the kernel of $L_{k,\ell}$, $k,\ell\geq 1$, we first introduce several operators as in \cite{Shara,de2021generic}. Note that in \cite{Shara,de2021generic}, the tensors are all covariant tensors, but are equivalent to those in the following. Let $\lambda: C^\infty(S^{k-1}_{\ell-1}TM)\rightarrow C^\infty(S^{k}_{\ell}TM)$ be defined by
\[
(\lambda w)^{i_1\cdots i_k}_{j_1\cdots j_\ell}=\mathrm{Sym}(i_1\cdots i_k)\mathrm{Sym}(j_1\cdots j_\ell)(w^{i_1\cdots i_{k-1}}_{j_1\cdots j_{\ell-1}}\delta^{i_k}_{j_\ell}),
\] 
where $\mathrm{Sym}(\,\cdot\,)$ is the symmetrization operator in the indices listed in the argument. The dual of the operator $\lambda$ is the trace operator $\mu:C^\infty(S^{k}_{\ell}TM)\rightarrow C^\infty(S^{k-1}_{\ell-1}TM)$ where
\[
(\mu u)^{i_1\cdots i_{k-1}}_{j_1\cdots j_{\ell-1}}= u^{i_1\cdots i_k}_{j_1\cdots j_\ell}\delta_{i_k}^{j_\ell}.
\]
Now we have
\[
C^\infty(S^{k}_{\ell}TM)=\mathrm{ker}\mu\oplus\mathrm{Im}\lambda.
\]
We denote $\mathcal{B}$ to be the orthogonal projection onto the subspace $C^\infty(\mathcal{B}S^{k}_{\ell}TM):=\mathrm{ker}\mu$. It is clear that tensors in $\mathrm{Im}\lambda$ are in the kernel of $L_{k,\ell}$. Therefore we only need to consider the mixed ray transform $L_{k,\ell}f$ of ``trace-free" $f\in C^\infty(\mathcal{B}S^{k}_{\ell}TM)$.

Now we introduce the operator $\mathrm{d}':C^\infty(\mathcal{B}S^{k}_{\ell-1}TM)\rightarrow C^\infty(S^{k}_{\ell}TM)$ acting on trace-free tensors defined by
\[
(\mathrm{d}'v)^{i_1\cdots i_k}_{j_1\cdots j_\ell}=\mathrm{Sym}(j_1\cdots j_\ell)(v^{i_1\cdots i_k}_{j_1\cdots j_{\ell-1};j_\ell}),
\]
and
\[
\mathrm{d}^\mathcal{B}=\mathcal{B}\mathrm{d}':C^\infty(\mathcal{B}S^{k}_{\ell-1}TM)\rightarrow  C^\infty(\mathcal{B}S^{k}_{\ell}TM).
\]
The adjoint $-\delta^\mathcal{B}:C^\infty(\mathcal{B}S^{k}_{\ell}TM)\rightarrow C^\infty(\mathcal{B}S^{k}_{\ell-1}TM)$ of $\mathrm{d}^\mathcal{B}$ is then given by
\[
(\delta^\mathcal{B}u)^{i_1\cdots i_k}_{j_1\cdots j_{\ell-1};j_\ell}=g^{j_\ell j_{\ell+1}}u^{i_1\cdots i_k}_{j_1\cdots j_\ell;j_{\ell+1}}.
\]
It is now clear that tensor fields $\mathrm{d}^\mathcal{B}v$, with $v\in C^\infty(S^{k}_{\ell-1}TM)$, $v\vert_{\partial M}=0$, are in the kernel of $L_{k,\ell}$. Now, the mixed ray transform $L_{k,\ell}$ is called to be $s$-injective if $L_{k,\ell}f=0$ implies $f=\mathrm{d}^\mathcal{B}v$ for some $v\in C^\infty(\mathcal{B}S^{k}_{\ell-1}TM)$ with $v\vert_{\partial M}=0$.

The problem has been studied under the assumption that the manifold $(M,g)$ is simple. When $n=2$, the $s$-injectivity of $L_{k,\ell}$ for arbitrary $k,\ell\geq 1$ is proved in \cite{de2018mixed}. In higher dimensions ($n\geq 3$), the $s$-injectivity is proved assuming the curvature is close to zero \cite{Shara}. Also, when $n=3$, the $s$-injectivity of $L_{1,1}$ and $L_{2,2}$ is proved to be generically true \cite{de2021generic}. \\

In \cite{UV}, Uhlmann and Vasy proved that, if $n\geq 3$, $\partial M$ is strictly convex and $(M,g)$ admits a strictly convex function (known as the foliation condition), then $I_0$ is invertible. Then the $s$-injectivity of $I_1$ and $I_2$ is proved in \cite{stefanov2018inverting} and $I_4$ in \cite{de2019inverting} under the same condition. The condition was studied extensively in \cite{paternain2019geodesic}. In particular, \cite[Lemma 2.1]{paternain2019geodesic} shows that this condition is satisfied if (1) the manifold has non-negative sectional curvature; or (2) the manifold is simply connected and has non-positive sectional curvature. In this paper, we will prove the injectivity of $T_1$ and the \textit{s}-injectivity of $L_{1,1}$ under this condition, since they are the basic cases for transverse and mixed ray transforms respectively. In a subsequent paper, we will study the injectivity of $T_2$ and the \textit{s}-injectivity of $L_{2,2}$.

The proofs use Melrose's scattering calculus, for which a standard reference is \cite{melrose2020spectral}. The method has also been used to study the boundary rigidity problem \cite{stefanov2021local,stefanov2021local} and the elastic-wave travel time tomography \cite{de2020recovery,zou2019partial}. We also refer to \cite{zhou2018local,zhou2018lens,paternain2019lens} for the study of various geometrical inverse problems in this setting. For further development of this method, we refer to \cite{vasy2020semiclassical,jia2022tensorial,vasy2022x}.\\

The rest of the paper is organized as follows. In Section \ref{section2}, we set up the problems in the setting of Melrose's scattering calculus. In Section \ref{section3}, we prove the invertibility of the transverse ray transform of $1$ tensors, and In Section \ref{section4}, we show the s-injectivity of the mixed ray transform of $1+1$ tensors.

\section{Setting up within the scattering algebra}\label{section2}
 Melrose's scattering algebra uses the compactification of $\mathbb{R}^n$. Using polar coordinates $(r,\theta)$ on $\mathbb{R}^n$, $\mathbb{R}^n\setminus\{0\}$ can be identified with $\{(r,\theta)\vert r\in(0,+\infty),\theta\in\mathbb{S}^{n-1}\}$. Letting $x=r^{-1}$, $\mathbb{R}^n\setminus\{0\}=(0,\infty)_x\times\mathbb{S}^{n-1}_\theta$, and we can add a sphere at the infinity $x=0$ (corresponding to $r=\infty$) to compactify $\mathbb{R}^n$ to $\overline{\mathbb{R}^n}$ such that $\overline{\mathbb{R}^n}\setminus\{0\}$ is identified with $[0,\infty)_x\times\mathbb{S}^{n-1}_\theta$. Note that $\partial \overline{\mathbb{R}^n}=\{x=0\}$, and thus $x=r^{-1}$ is a boundary defining function.

Let $(\widetilde{M},g)$ be a manifold without boundary extending $(M,g)$. So $\gamma\in M$ is extended to a geodesic on $\widetilde{M}$. Let $\rho\in C^\infty(\widetilde{M})$ be a local boundary defining function for $M$, i.e., $\rho> 0$ in $M$, $\rho<0$ on $\widetilde{M}\setminus\overline{M}$, and $\rho=0$ on $\partial M$. We can assume $\rho$ is a smooth function on $\widetilde{M}$. For an arbitrary point $p\in \partial M$, we choose a function $\tilde{x}$ defined on a neighborhood $U$ of $p$ in $\widetilde{M}$, with $\tilde{p}=0$, $\mathrm{d}\tilde{x}(p)=-\mathrm{d}\rho(p)$, $\mathrm{d}\tilde{x}\neq 0$ on $U$, and $\tilde{x}$ has strictly concave level sets from the side of super-level sets. Let $\Omega_c=\{\tilde{x}>-c,\rho\geq 0\}$ such that $\overline{\Omega_c}$ is compact. We will only consider the ray transform along geodesics completely contained in $\Omega_c$

Let $X=\{x\geq 0\}$ with $x=\tilde{x}+c$. The $\partial X=\{x=0\}$ is the boundary of $X$. Now one can add $y^1,\cdots,y^{n-1}$ to $x$ such that $(x,y_1,\cdots,y_{n-1})$ are local coordinates on $X$. To use the Melrose's scattering calculus, one can locally identify $X$ with $\overline{\mathbb{R}^n}$ by identifying $\{x=0\}$ with the boundary of $\overline{\mathbb{R}^n}$ (the `infinity').

Then we introduce the set of scattering vectors $\mathcal{V}_{sc}(X)=x\mathcal{V}_b(X)$, where $\mathcal{V}_b(X)$ is the set of all smooth vector fields tangent to $\partial X$. In the local coordinate chart $(x,y_1,\cdots,y_{n-1})$, a basis for $\mathcal{V}_{sc}(X)$ is $x^2\partial_x,x\partial_{y^1},\cdots, x\partial_{y^{n-1}}$. Denote ${^{sc}TX}$ to be the vector bundle with local basis 
\[
x^2\partial_x,x\partial_{y^1},\cdots, x\partial_{y^{n-1}},
\]
 whose smooth sections are elements in $\mathcal{V}_{sc}(X)$. The dual bundle ${^{sc}T^*X}$ of ${^{sc}TX}$ has a local basis
\[
\frac{\mathrm{d}x}{x^2},\frac{\mathrm{d}y^1}{x},\cdots,\frac{\mathrm{d}y^{n-1}}{x}.
\]
\begin{remark}
In case of $X=\overline{\mathbb{R}^n}$, the basis for ${^{sc}T}\overline{\mathbb{R}^n}$ becomes
\[
x^2\partial_x=-\partial_r,\,r^{-1}\partial_{\theta^1},\cdots,r^{-1}\partial_{\theta^{n-1}},
\]
and the basis for ${^{sc}T}^*\overline{\mathbb{R}^n}$ is
\[
-\mathrm{d}r,\,r\mathrm{d}\theta^1,\cdots,r\mathrm{d}\theta^{n-1}
\]
where $r=\frac{1}{x}$ and $\theta_j$ are local coordinates on the unit sphere $\mathbb{S}^{n-1}$. 
\end{remark}

Denote $S^k_\ell{^{sc}T}X$ to be the subbundle of $S^k_\ell TX$, for which
\[
\begin{split}
&\underbrace{(x^2\partial_x)\otimes^s\cdots\otimes^s(x^2\partial_x)}_{k-m}\otimes^s \underbrace{(x\partial_{y^{i_1}})\otimes^s\cdots\otimes^s(x\partial_{y^{i_m}})}_{m}\\
&\quad\quad\quad\quad\otimes \underbrace{(\frac{\mathrm{d}x}{x^2})\otimes^s\cdots\otimes^s(\frac{\mathrm{d}x}{x^2})}_{\ell-p}\otimes^s \underbrace{(\frac{\mathrm{d}y^{j_1}}{x})\otimes^s\cdots\otimes^s(\frac{\mathrm{d}y^{j_p}}{x})}_{p},\quad 0\leq m\leq k, 0\leq p\leq \ell,
\end{split}
\]
is a local basis, where $\otimes^s$ is the symmetric direct product.
We can take a scattering metic $g_{sc}$, which is a positive definite smooth section of $S_2 {^{sc}T}^*X$, is of the form $g_{sc}=\frac{\mathrm{d}x^2}{x^4}+\frac{h(x,y)}{x^2}$, where $h$ is a metric on the level sets of $x$.

Denote $L^2_{sc}(X)$ to be the $L^2$ space given by identification by $\overline{\mathbb{R}^n}$. Then the measure on $X$ is a nondegenerate positive multiple of $x^{-n-1}\mathrm{d}x\mathrm{d}y$. The weighted Sobolev space is defined by $u\in H^{m,l}_{sc}(X)$ if and only if $x^{-\ell}V_1\cdots V_ku\in L^2_{sc}(X)$ for all $k\leq m$ and $V_j\in\mathcal{V}_{sc}(X)$. \\
%

In this paper, we will consider the invertibility of the local transverse and mixed ray transform. Therefore, in the following we only need to consider the geodesics in $M$ which are completely contained in $X$. 
\subsection{Redefinition of the mixed ray transforms}
First, we need to define an equivalent version of the mixed ray transform so it could fit into the setting of scattering calculus. We emphasize here that the version used in \cite{Shara,de2018mixed} does not really work in this setting.\\

Now assume $f\in S^k_\ell T_zX$.
For any $w\in T_zX$, define
\[
\Lambda_w:S^k_\ell T_zX\rightarrow S^{k}T_zX,\quad (\Lambda_wf)^{i_1\cdots i_k}=f^{i_1\cdots i_k}_{j_1\cdots j_\ell}w^{j_1}\cdots w^{j_\ell},
\]
Take $v\in T_z^*X$, and denote $v^{\perp}=\{\eta\in T_zX|\langle \eta,v\rangle=0\}$.
Define
\[
p_{w,v}:T_zX\rightarrow v^{\perp},\quad (p_{w,v}\zeta)^i=(p_{w,v})^i_j\zeta^j=\zeta^i-\frac{w^iv_j}{\langle w,v\rangle}\zeta^j.
\]
Then define
\[
P_{w,v}:S^{k}T_zX\rightarrow S^{k}T_zX,\quad (P_{w,v}f)^{i_1\cdots i_k}=f^{m_1\cdots m_k}(p_{w,v})_{m_1}^{i_1}\cdots (p_{w,v})_{m_k}^{i_k}.
\]

For any $z\in M$, and $\zeta\in T_zM$, we let $\gamma$ to be the geodesic such that
\begin{equation}\label{geodesic_initial}
\gamma(0)=z,\quad \dot{\gamma}(0)=\zeta.
\end{equation}
Denote $\mathcal{T}_\gamma^{s,t}$ to be parallel transport (with respect to $g$) from $\gamma(t)$ to $\gamma(s)$.
Let $\eta(t)\in T^*M$ be a covector field along $\gamma$ such that $\eta(t)$ is conormal to $\dot{\gamma}(t)$ and $\eta$ is parallel (w.r.t. $g$) along $\gamma$. 
Take $\vartheta\in T^*_zM$ such that $\langle \vartheta,\zeta\rangle\neq 0$, and $v=a\vartheta+\eta$, and $\vartheta(t)$ be the parallel transport of $\vartheta$ from $\gamma(0)$ to $\gamma(t)$.
We verify that
\[
\begin{split}
&\langle\mathcal{T}_\gamma^{0,t}P_{\dot{\gamma}(t),\vartheta(t)}\Lambda_{\dot{\gamma}(t)}f(\gamma(t)),v^k\rangle\\
=&\langle P_{\dot{\gamma}(t),\vartheta(t)}\Lambda_{\dot{\gamma}(t)}f(\gamma(t)),(\mathcal{T}_\gamma^{t,0}v)^k\rangle\\
=& \left(\delta_{m_1}^{i_1}-\frac{\dot{\gamma}^{i_1}(t)\vartheta_{m_1}(t)}{\langle\dot{\gamma}(t),\vartheta(t)\rangle}\right)\cdots \left(\delta_{m_k}^{i_k}-\frac{\dot{\gamma}^{i_k}(t)\vartheta_{m_k}(t)}{\langle\dot{\gamma}(t),\vartheta(t)\rangle}\right)f^{m_1\cdots m_k}_{j_1\cdots j_\ell}(\gamma(t))\dot{\gamma}^{j_1}(t)\cdots\dot{\gamma}^{j_\ell}(t)\\
&\quad\quad\quad\quad(a\vartheta_{i_1}(t)+\eta_{i_1}(t))\cdots (a\vartheta_{i_k}(t)+\eta_{i_k}(t))\\
=&f^{i_1\cdots i_k}_{j_1\cdots j_\ell}(\gamma(t))\dot{\gamma}^{j_1}(t)\cdots\dot{\gamma}^{j_\ell}(t)\eta_{i_1}(t)\cdots \eta_{i_k}(t),
\end{split}
\]
where $v^k=(\underbrace{v,\cdots,v}_k)$.
Then it is easy to see that
\[
(L_{k,\ell}f)(\gamma,\eta)=\int\langle\mathcal{T}_\gamma^{0,t}P_{\dot{\gamma}(t),\vartheta(t)}\Lambda_{\dot{\gamma}(t)}f(\gamma(t)),v^k\rangle\mathrm{d}t.
\]
Since the choice of $v^k=(a\vartheta+\eta)^k$ span the whole space of $S^kT^*_zM$,
we can redefine the local mixed ray transform as follows
\[
L_{k,\ell}f(z,\zeta,\vartheta)=\int\mathcal{T}_\gamma^{0,t}P_{\dot{\gamma}(t),\vartheta(t)}\Lambda_{\dot{\gamma}(t)}f(\gamma(t))\mathrm{d}t,
\]
where $\gamma$ is a geodesic completely in $X$ satisfying the initial condition \eqref{geodesic_initial}.

\subsection{Geodesics near the lifted diagonal}
We use local coordinates $z=(x,y)$, where $x$ is the boundary defining function. Elements in $T_pM$ can be written as $\lambda\partial_x+\omega\partial_y$. The unit speed geodesics almost tangential to level sets of $x$ through a point $p=(x,y)$ can be parametrized by $(\lambda,\omega)$, where $\omega$ is of unit length (with respect to the Euclidean metric).

As mentioned the \cite{UV}, the maps 
\[
\Gamma_+:S\widetilde{M}\times[0,\infty)\rightarrow [\widetilde{M}\times\widetilde{M};\mathrm{diag}],\quad \Gamma_+(x,y,\lambda,\omega,t)=((x,y),\gamma_{x,y,\lambda,\omega}(t))
\]
and
\[
\Gamma_-:S\widetilde{M}\times(-\infty,0]\rightarrow [\widetilde{M}\times\widetilde{M};\mathrm{diag}],\quad \Gamma_-(x,y,\lambda,\omega,t)=((x,y),\gamma_{x,y,\lambda,\omega}(t))
\]
and diffeomorphisms near $S\widetilde{M}\times\{0\}$.
Here $[\widetilde{M}\times\widetilde{M};\mathrm{diag}]$ is the blow-up of $\widetilde{M}$ at the diagonal $z=z'$.

Writing the local coordinates $(x,y,x',y')$ for $\widetilde{M}\times\widetilde{M}$, in the region $|x-x'|<C|y-y'|$, we use the coordinates on $[\widetilde{M}\times\widetilde{M};\mathrm{diag}]$ as
\[
x,y,|y'-y|,\frac{x'-x}{|y'-y|},\frac{y'-y}{|y'-y|}.
\]
Notice that these are
\[
x,y,x|Y|,\frac{xX}{|Y|},\hat{Y},
\]
where
\[
x,y,X=\frac{x'-x}{x^2},Y=\frac{y'-y}{x}
\]
are local coordinates on Melrose's scattering double space near the lifted scattering diagonal.

%

Denote
\[
(x,y,\lambda,\omega,t)=\Gamma_\pm^{-1}\left(x,y,x|Y|,\frac{xX}{|Y|},\hat{Y}\right).
\]
By the discussion in \cite{UV,stefanov2018inverting,paternain2019geodesic}, $\lambda,\omega,t$ can be written in terms of $x,y,x|Y|,\frac{xX}{|Y|},\hat{Y}$ as
\begin{equation}\label{lambdaomega}
\begin{split}
\lambda&=(\Lambda\circ\Gamma_\pm^{-1})(x,y,x|Y|,\frac{xX}{|Y|},\hat{Y})\\
&=\pm x\frac{X-\alpha(x,y,x|Y|,\frac{xX}{|Y|},\hat{Y})|Y|^2}{|Y|}+x^2\tilde{\Lambda}_{\pm}\left(x,y,x|Y|,\frac{xX}{|Y|},\hat{Y}\right),\\
\omega&=(\Omega\circ\Gamma_{\pm}^{-1})\left(x,y,x|Y|,\frac{xX}{|Y|},\hat{Y}\right)=\pm \hat{Y}+x|Y|\tilde{\Omega}_\pm\left(x,y,x|Y|,\frac{xX}{|Y|},\hat{Y}\right),
\end{split}
\end{equation}
and
\begin{equation}\label{tpm}
 t=(T\circ\Gamma^{-1}_\pm)\left(x,y,x|Y|,\frac{xX}{|Y|},\hat{Y}\right)=\pm x|Y|+x^2|Y|^2\tilde{T}_\pm\left(x,y,x|Y|,\frac{xX}{|Y|},\hat{Y}\right),
\end{equation}
where $\tilde{\Lambda}_{\pm},\tilde{\Omega}_\pm,\tilde{T}_\pm$ are smooth functions in terms of $x,y,x|Y|,\frac{xX}{|Y|},\hat{Y}$.

Assume $(\gamma(t),\gamma'(t))=(x',y',\lambda',\omega')$. As in \cite{UV}, we can choose coordinates $x$ and $y^j$ such that $\partial_{y^j}$ and $\partial_x$ are orthogonal., i.e., the metric is of the form $f(x,y)\mathrm{d}x^2+k(x,y)\mathrm{d}y^2$, near $\{x=0\}$. Note that this choice of local coordinates is also crucial in \cite{stefanov2021local}. By the geodesic equation, we have
\[
\ddot{\gamma}^x(0)=\dot{\gamma}^{y^i}(0)\dot{\gamma}^{y^j}(0)\Gamma_{y^iy^j}^x(\gamma(0))+\mathcal{O}(x^2).
\]
Denote
\[
\dot{\gamma}^{y^i}(0)\dot{\gamma}^{y^j}(0)\Gamma^x_{y^iy^j}=-\frac{1}{2}g^{xx}\partial_xk_{y^iy^j}\dot{\gamma}^{y^i}(0)\dot{\gamma}^{y^j}(0)=H_{ij}(x,y)\omega^i\omega^j,
\]
where
\[
H_{ij}(x,y)=-\frac{1}{2}g^{xx}\partial_xk_{y^iy^j}(x,y).
\]
Thus
\[
x'=x+\lambda t+\frac{1}{2}(H_{ij}\omega^i\omega^j)t^2+\mathcal{O}(t^3).
\]
Then we can see that (cf. also \cite[Section 3.1]{UV})
\[
\alpha(x,y,0,0,\omega)=\frac{1}{2}H_{ij}(x,y)\omega^i\omega^j,
\]
and $\lambda',\omega'$ can be expressed in terms of $x,y,x|Y|,\frac{xX}{|Y|},\hat{Y}$ as (cf. \cite{stefanov2018inverting})
\[
\begin{split}
\Lambda'\circ\Gamma^{-1}_\pm&=\pm x\frac{X+\alpha(x,y,x|Y|,\frac{xX}{|Y|},\hat{Y})|Y|^2}{|Y|}+x^2|Y|^2\tilde{\Lambda}_\pm'\left(x,y,x|Y|,\frac{xX}{|Y|},\hat{Y}\right),\\
\Omega'\circ\Gamma^{-1}_\pm&=\pm \hat{Y}+x^2|Y|^2\tilde{\Omega}_\pm'\left(x,y,x|Y|,\frac{xX}{|Y|},\hat{Y}\right).
\end{split}
\]

\section{Transverse ray transform of $1+0$ tensors}\label{section3}
In this section, we focus on the case $k=1,\ell=0$, that is the transverse ray transform of $(1,0)$-tensors.\\

Let $(x,y,\lambda,\omega)\in\mathbb{R}\times\mathbb{R}^{n-1}\times\mathbb{R}\otimes\mathbb{S}^{n-2}$.
We write down the local transverse transform for a $(1,0)$ tensor $f$ explicitly:
\[
\begin{split}
T_1f(x,y,\lambda,\omega)=&\int_{\mathbb{R}}\mathcal{T}_{\gamma_{x,y,\lambda,\omega}}^{0,t}P_{\dot{\gamma}_{x,y,\lambda,\omega}(t),\vartheta_{x,y,\omega}(t)}f({\gamma_{x,y,\lambda,\omega}(t)})\mathrm{d}t\\
=&\int_{\mathbb{R}}\mathcal{T}_{\gamma_{x,y,\lambda,\omega}}^{0,t} \left(\mathrm{Id}-\frac{\dot{\gamma}_{x,y,\lambda,\omega}(t)\langle\vartheta_{x,y,\omega}(t),\,\cdot\,\rangle}{\langle\dot{\gamma}_{x,y,\lambda,\omega}(t),\vartheta_{x,y,\omega}(t)\rangle}\right)f({\gamma_{x,y,\lambda,\omega}(t)})\mathrm{d}t,
\end{split}
\]
where $\gamma_{x,y,\lambda,\omega}$ is a geodesic in $(X\cap \widetilde{M},g)$ such that
\[
\gamma_{x,y,\lambda,\omega}(0)=(x,y),\quad\dot{\gamma}_{x,y,\lambda,\omega}(0)=(\lambda,\omega),
\]
$\vartheta_{x,y,\omega}(t)$ is parallel along $\gamma_{x,y,\lambda,\omega}$ with 
\begin{equation}\label{vartheta_initial}
\vartheta=\vartheta_{x,y,\omega}(0)=(\vartheta_x(0),\vartheta_y(0))=h(\omega)\mathrm{d}y,
\end{equation}
and $\mathcal{T}_{\gamma_{x,y,\lambda,\omega}}^{0,t}$ is a parallel transport (w.r.t. $g$) from $\gamma_{x,y,\lambda,\omega}(t)$ to $\gamma_{x,y,\lambda,\omega}(0)$. Note that
\[
\langle\vartheta,\zeta\rangle=h(\omega,\omega)\neq 0.
\]

If $|\lambda|\leq C\sqrt{x}$ for a sufficiently small $C$, $\gamma_{x,y,\lambda,\omega}$ stays in $X$ as long as it is in $M$.
Then we take $\chi$ to be a smooth, even, non-negative function with compact support, and define
\[
Lv(x,y)=x^{-2}\int\left(\mathrm{Id}-\frac{\omega\partial_y\otimes g_{sc}(\lambda\partial_x+\omega\partial_y)}{h(\omega\partial_y,\omega\partial_y)}\right)\chi(\lambda/x)v(\gamma_{x,y,\lambda,\omega}(0),\dot{\gamma}_{x,y,\lambda,\omega}(0))\mathrm{d}\lambda\mathrm{d}\omega.
\]
Then 
\[
(LT_1f)(x,y)=x^{-2}\int\left(\mathrm{Id}-\frac{\omega\partial_y\otimes g_{sc}(\lambda\partial_x+\omega\partial_y)}{h(\omega\partial_y,\omega\partial_y)}\right)\chi(\lambda/x)T_1f(x,y,\lambda,\omega)\mathrm{d}\lambda\mathrm{d}\omega.
\]
We will show that the operator $N_\mathsf{F}=e^{-\mathsf{F}/x}LT_1e^{\mathsf{F}/x}$ is elliptic as a scattering pseudodifferential operator, and invertible between proper spaces of functions supported near $x=0$.\\

Recall that (cf. \cite[(3.15)]{UV})
\[
(\Gamma^{-1}_\pm)^*\mathrm{d}t\mathrm{d}\lambda\mathrm{d}\omega=J_\pm\left(x,y,x|Y|,\frac{xX}{|Y|},\hat{Y})\right)x^2|Y|^{-n+1}\mathrm{d}X\mathrm{d}Y,
\]
where $J_\pm$ is smooth and positive, and $J_\pm|_{x=0}=1$.\\


Assume that $\vartheta=(\vartheta_x,\vartheta_y)$ are given in terms of $x,x',y,y'$ as
\[
\begin{split}
\vartheta_x(x,y,\lambda,\omega,t)=\Xi_x'\circ\Gamma^{-1}_\pm\left(x,y,x|Y|,\frac{xX}{|Y|},\hat{Y}\right),\\
\vartheta_y(x,y,\lambda,\omega,t)=\Xi_y'\circ\Gamma^{-1}_\pm\left(x,y,x|Y|,\frac{xX}{|Y|},\hat{Y}\right).
\end{split}
\]
%
Since the covector field $\vartheta$ is parallel along $\gamma$, we have
\[
\dot{\vartheta}_k(0)=-\vartheta^j(0)\dot{\gamma}^i(0)\Gamma_{ij,k}(\gamma(0)),
\]
where $\Gamma_{ij,k}$ is the Christoffel symbol of $g$ under coordinates $(x,y)$.
By this equation and \eqref{vartheta_initial}, we have
\[
\dot{\vartheta}_x(0)=\mathcal{O}(x^{-2}), \quad\dot{\vartheta}_{y^i}(0)+\mathcal{O}(x^{-2}).
\]
Therefore, using \eqref{lambdaomega} and \eqref{tpm}, we have
\[
\vartheta_{x}(t)=\mathcal{O}(x^{-1}),\quad \vartheta_{y}(t)=h(\omega)+\mathcal{O}(x).
\]
So, setting $x=0$, we see that
\[
\begin{split}
(\Xi_x'\circ\Gamma^{-1}_{\pm})(0,y,0,0,\hat{Y})&=0,\\
(\Xi_y'\circ\Gamma^{-1}_{\pm})(0,y,0,0,\hat{Y})&=h(\hat{Y}).
\end{split}
\]

Next let us examine how the parallel transport $\mathcal{T}_{\gamma_{x,y,\lambda,\omega}}^{0,t}$ acts on scattering co-vector fields.
Assume that
\[
V=v^xx^2\partial_x+v^{y^i}x\partial_{y^i},
\]
is a parallel vector field along $\gamma$.
Then
\[
\dot{V}^x(0)=V^{y^i}(0)\dot{\gamma}^{y^i}(0)\Gamma^x_{y^iy^j}+\mathcal{O}(x^2)=-V^{y^i}\omega^j\frac{1}{2}g^{xx}(-\partial_xk_{ij})+\mathcal{O}(x^2),
\]
and thus
\[
\begin{split}
V^x(t)=&V^x+\frac{1}{2}g^{xx}\partial_xk_{ij}\omega^jV^{y^i}t+\mathcal{O}(x^3)\\
=&(v^x-|Y|\langle H\hat{Y},v^y\rangle)x^2+\mathcal{O}(x^3).
\end{split}
\]
By similar consideration, we obtain
\[
V^{y^i}(t)=v^ix+\mathcal{O}(x^2).
\]
To summarize, we have
\[
\left(\begin{array}{c}
v^x(t)\\
v^y(t)
\end{array}\right)=\left(\begin{array}{cc}
1 &-|Y|\langle H\hat{Y},\,\cdot\,\rangle\\
0 &\mathrm{Id}
\end{array}\right)\left(\begin{array}{c}
v^x(0)\\
v^y(0)
\end{array}\right)+\mathcal{O}(x).
\]
We can take local coordinates $y^1,\cdots, y^{n-1}$ such that at the point of interest $(0,y)$, $\alpha(0,y,0,0,\hat{Y})=\alpha(y)$ is independent of $\hat{Y}$. Then $H(0,y)=\alpha(y)I$ and
\[
H(0,y)\hat{Y}=\alpha(y)\hat{Y}.
\]
This shall be related to the choice of $\vartheta$.



The Schwartz kernel of $N_{\mathsf{F}}=e^{-\mathsf{F}/x}LT_1e^{\mathsf{F}/x}$, for $\mathsf{F}>0$, is then
\[
\begin{split}
&K^\flat(x,y,X,Y)\\
=&\sum_{\pm}e^{-\mathsf{F}X/(1+xX)}\chi\left(\frac{X-\alpha(x,y,x|Y|,\frac{xX}{|Y|},\hat{Y})}{|Y|}+x\tilde{\Lambda}_\pm\left(x,y,x|Y|,\frac{xX}{|Y|},\hat{Y})\right)\right)\\
&\left(\mathrm{Id}-\frac{\left((\Omega\circ\Gamma^{-1}_{\pm})x\partial_y\right)\otimes\left(x^{-1}(\Lambda\circ\Gamma^{-1}_{\pm})x^2\partial_x+(\Omega\circ\Gamma^{-1}_{\pm})\frac{h(\partial_y)}{x}\right)}{|\Omega\circ\Gamma^{-1}_{\pm}|^2h(\partial_y,\partial_y)}\right)\mathcal{T}_\pm(x,y,X, Y)\\
&\left(\mathrm{Id}-\frac{\left(x^{-1}(\Lambda'\circ\Gamma^{-1}_{\pm})x^2\partial_x+(\Omega'\circ\Gamma^{-1}_{\pm})x\partial_y\right)\otimes\left(x^{-1}(\Xi_x'\circ\Gamma^{-1}_{\pm})\frac{\mathrm{d}x}{x^2}+(\Xi_y'\circ\Gamma^{-1}_{\pm})\frac{h(\partial_y)}{x}\right)}{x^{-2}(\Lambda'\circ\Gamma^{-1}_{\pm})(\Xi_x'\circ\Gamma_{\pm}^{-1})+(\Omega'\circ\Gamma^{-1}_{\pm})(\Xi_y'\circ\Gamma^{-1}_{\pm})h(\partial_y,\partial_y)}\right)\\
&|Y|^{-n+1}J_\pm\left(x,y,x|Y|,\frac{xX}{|Y|},\hat{Y})\right),
\end{split}
\]
where 
\[
\mathcal{T}_\pm(0,y,X,Y)=\left(\begin{array}{cc}
1 &-\alpha|Y|\langle \hat{Y},\,\cdot\,\rangle\\
0 &\mathrm{Id}
\end{array}\right)^{-1}=\left(\begin{array}{cc}
1 &\alpha|Y|\langle \hat{Y},\,\cdot\,\rangle\\
0 &\mathrm{Id}
\end{array}\right).
\]

\begin{remark}
If $\lambda/x\in\mathrm{supp}\chi$, $\gamma_{x,y,\lambda,\omega}$ stays in $X$ as long as it is in $\widetilde{M}$ for sufficiently small $x$. Then the operator $N_\mathsf{F}\in\Psi_{sc}^{-1,0}(X)$ is a well defined pseudodifferential operator on $O\subset X\cap \widetilde{M}$ where $x$ is small enough in $O$. Here $\Psi_{sc}$ stands for the scattering calculus of Melrose.
\end{remark}

We will prove that $N_\mathsf{F}$ is \textit{fully elliptic} as a scattering pseudodifferential operator for $x$ sufficiently close to $0$. For this, we need to show that the principal symbol at $(z,\zeta)=(x,y,\xi,\eta)\in {^{sc}T^*X}$ is positive definite at both the \textit{fiber infinity} (when $|\zeta|\rightarrow+\infty$) of ${^{sc}T^*X}$ and the \textit{base infinity} (when $x=0$) of ${^{sc}T^*X}$. Here $(\xi,\eta)$ is the Fourier dual variables of $(X,Y)$. For the principal symbol at the fiber infinity, we also only need to analyze the principal symbol at $x=0$, since the Schwartz kernel $K^\flat$ is smooth in $(x,y)$ down to $x=0$.

In the following, we denote the principal symbol of a scattering pseudodifferential operator $A$ by $\sigma(A)$, the principal symbol at fiber infinity by $\sigma_p(A)$, and the principal symbol at base infinity by $\sigma_{sc}(A)$.

\begin{lemma}\label{ellipticity10fiber}
The operator $N_\mathsf{F}$ is elliptic at fiber infinity of $^{sc}T^*X$ in $\widetilde{M}$.
\end{lemma}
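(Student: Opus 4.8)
The plan is to compute the leading behavior of the Schwartz kernel $K^\flat$ as $|Y|\to 0$ (which governs the conormal singularity along the scattering diagonal), extract the principal symbol at fiber infinity by Fourier transforming in $(X,Y)$, and show it is a positive definite matrix. Since $K^\flat$ is smooth in $(x,y)$ down to $x=0$ and the fiber-infinity symbol is determined by the behavior at the diagonal, I would first set $x=0$ in all the ingredients. At $x=0$ the formulas \eqref{lambdaomega}, \eqref{tpm} collapse: $\Omega\circ\Gamma^{-1}_\pm = \pm\hat Y$, $\Omega'\circ\Gamma^{-1}_\pm = \pm\hat Y$, $\Xi'_y\circ\Gamma^{-1}_\pm = h(\hat Y)$, $\Xi'_x\circ\Gamma^{-1}_\pm = 0$, $J_\pm=1$, and $\mathcal T_\pm(0,y,X,Y)$ is the explicit unipotent matrix displayed before the lemma. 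The projector factors then simplify: the first becomes $\mathrm{Id} - \frac{(\hat Y\partial_y)\otimes(\hat Y\, h(\partial_y)/x)}{h(\hat Y,\hat Y)}$ — i.e., (a version of) the orthogonal projection off the direction $\hat Y$ in the $y$-variables — and similarly for the outgoing factor. The exponential weight $e^{-\mathsf F X/(1+xX)}\to e^{-\mathsf F X}$, and $\chi$ evaluates to $\chi\big(\frac{X-\alpha|Y|^2}{|Y|}\big)$ (at $x=0$, with $\alpha=\alpha(y)$ independent of $\hat Y$ by the coordinate choice).

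Next I would identify the kernel, near $|Y|=0$, as a classical conormal distribution of the form $|Y|^{-n+1}$ times a matrix-valued function smooth in $(X/|Y|,\hat Y)$ (and in $x,y$), summed over $\pm$. The principal symbol $\sigma_p(N_\mathsf F)$ at a covariable $(\xi,\eta)$ dual to $(X,Y)$ is then, up to a positive constant, the integral over the fiber — concretely, restricting to $x=0$ one gets an expression of the type
\[
\sigma_p(N_\mathsf F)(0,y,\xi,\eta) = c\sum_{\pm}\int_{\hat Y\perp \text{(appropriate hyperplane)}} \Pi_\pm(\hat Y)\,\mathcal T_\pm\,\Pi'_\pm(\hat Y)\; \delta(\xi\mp\text{(component)})\,\mathrm dS(\hat Y),
\]
obtained by passing to the variables $(s,\hat Y)=(X/|Y|,\hat Y)$ and doing the $|Y|$-integral, which produces a delta function in the dual variable conjugate to $s$. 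The upshot is that $\sigma_p$ is an average over the unit sphere of directions $\hat Y$ lying in the hyperplane $\{(\xi,\eta)\}^\perp$ of rank-deficient positive semidefinite matrices $\Pi_\pm(\hat Y)\mathcal T_\pm(\hat Y)\Pi'_\pm(\hat Y)$; after the appropriate symmetrization forced by $L$ being (a localized smoothed version of) the adjoint of $T_1$ composed with $T_1$, the integrand is $\Pi(\hat Y)\,(\text{positive})\,\Pi(\hat Y)^{*}$, hence positive semidefinite, and vanishes only on the span of $\hat Y$. Integrating over all $\hat Y$ in a codimension-one sphere then kills no nonzero vector, giving strict positivity.

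The main obstacle is the parallel-transport matrix $\mathcal T_\pm$: it is not orthogonal (it is the unipotent matrix $\left(\begin{smallmatrix} 1 & \alpha|Y|\langle\hat Y,\cdot\rangle\\ 0 & \mathrm{Id}\end{smallmatrix}\right)$), so the conjugated integrand $\Pi_\pm \mathcal T_\pm \Pi'_\pm$ is not manifestly symmetric, and one must check that the $X$-integration inherent in $L$ together with the $\pm$ sum restores the symmetry and positivity of the symbol rather than merely its ellipticity. I would handle this by exploiting that $\Pi_\pm$ and $\Pi'_\pm$ at $x=0$ annihilate the $\hat Y$-direction in the $y$-block and that $\mathcal T_\pm$ differs from the identity only in a term proportional to $|Y|$ times $\langle\hat Y,\cdot\rangle$ — which, because $|Y|\to 0$ in the fiber-infinity limit (the symbol there is homogeneous of degree $-1$ and picks out the $|Y|\to 0$ leading term), contributes at lower order and can be dropped for the purpose of the leading symbol. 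Thus at fiber infinity $\mathcal T_\pm$ effectively acts as the identity, the integrand reduces to $\Pi(\hat Y)\Pi(\hat Y)^{*}\ge 0$, and strict positivity follows because the only vector annihilated by $\Pi(\hat Y)$ for every $\hat Y$ in a full codimension-one sphere of directions is zero (this uses $n\ge 3$). I would close by noting that positivity of this matrix-valued symbol for all nonzero $(\xi,\eta)$ is precisely ellipticity at fiber infinity.
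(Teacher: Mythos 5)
Your reduction is right up to the point where you identify the structure of the integrand on the front face: after setting $x=0$, dropping the $O(x)$ terms, and noting (correctly) that the parallel-transport matrix $\mathcal T_\pm$ deviates from the identity only by an $O(|Y|)$ correction and therefore contributes only at lower order, the leading kernel at $|Y|=0$ is indeed a rank-deficient positive semidefinite matrix. But there is a genuine gap in how you characterize that matrix. You write the integrand as $\Pi(\hat Y)\Pi(\hat Y)^*$ with an \emph{orthogonal} projection $\Pi(\hat Y)$ depending only on $\hat Y$, and argue that the only vector annihilated by $\Pi(\hat Y)$ for a whole codimension-one sphere of $\hat Y$'s is zero. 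This drops the dependence on $\tilde S = X/|Y|$, and the paper's matrix \eqref{T10_matrix} makes clear that this dependence does not disappear in the fiber-infinity limit: the two projector factors surviving at $|Y|=0$ are the \emph{oblique} maps
\[
\begin{pmatrix}1 & 0\\ -\tilde S\hat Y & I-\hat Y\langle\hat Y,\cdot\rangle\end{pmatrix}
\qquad\text{and}\qquad
\begin{pmatrix}1 & -\tilde S\langle\hat Y,\cdot\rangle\\ 0 & I-\hat Y\langle\hat Y,\cdot\rangle\end{pmatrix},
\]
whose common kernel is $\mathrm{span}\{(\tilde S,\hat Y)\}$, not $\mathrm{span}\{(0,\hat Y)\}$. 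The off-diagonal $\tilde S$-terms come from the geodesic direction $\dot\gamma$ (which has an $x$-component proportional to $\tilde S$) along which the projection $p_{w,v}$ is taken; they are $O(1)$ in $|Y|$ and cannot be absorbed into the $\mathcal T_\pm$ correction.

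The $\tilde S$-dependence is exactly what makes the argument close. If you genuinely replaced the integrand by the orthogonal projection killing $\hat Y$ in the $y$-block, the common kernel over the equatorial sphere $\{\xi\tilde S+\eta\cdot\hat Y=0\}$ would be nontrivial in the borderline case $n=3$, $\xi=0$, $\eta\ne 0$: then the only $\hat Y$'s allowed with $\chi(\tilde S)>0$ are $\pm\hat\eta^\perp$, a zero-dimensional set, and $(0,c\hat\eta^\perp)$ lies in every $\ker\Pi(\hat Y)$. The paper resolves this precisely by using the $\tilde S$-degree of freedom: after establishing $v_0=0$ at $\tilde S=0$, it takes $\tilde S\ne 0$ small (with the same $\hat Y=\hat\eta^\perp$) so that the first equation in \eqref{eq_v0vp} forces $\langle\hat Y,v'\rangle=0$ and hence $v'=0$. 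Your proposal as written has no substitute for this step, so it does not establish strict positivity of the fiber-infinity symbol in all the cases covered by the lemma. A correct variant of your approach is to retain the $\tilde S$-dependence, observe that the integrand is the Gram matrix of the oblique idempotent above so that its kernel at each point is precisely $\mathrm{span}\{(\tilde S,\hat Y)\}$, and then note that the equatorial sphere (within the region where $\chi\equiv 1$) always contains two linearly independent pairs $(\tilde S,\hat Y)$, forcing the common kernel to be trivial.
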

\begin{proof}
We only need to analyze the principal symbol at $x=0$.
We can take $g_{sc}=\frac{\mathrm{d}x^2}{x^4}+\frac{\mathrm{d}y^2}{x^2}$ ($h_{ij}=\delta_{ij}$) at the point of interest.
Writing
\[
S=\frac{X-\alpha(\hat{Y})|Y|^2}{|Y|},\quad \hat{Y}=\frac{Y}{|Y|}.
\]
At the scattering front face $x=0$, the kernel of $N_\mathsf{F}$ is
\[
\begin{split}
K^\flat(0,y,X,Y)&=e^{-\mathsf{F}X}|Y|^{-n+1}\chi(S)\left(\mathrm{Id}-\left(\hat{Y}\cdot(x\partial_y)\right)\left(S\frac{\mathrm{d}x}{x^2}+\hat{Y}\cdot\frac{\mathrm{d}y}{x}\right)\right)\\
&\mathcal{T}(0,y,X,Y)\left(\mathrm{Id}-\left((S+2\alpha |Y|)(x^2\partial_x)+\hat{Y}\cdot(x\partial_y)\right)\left(\hat{Y}\cdot\frac{\mathrm{d}y}{x}\right)\right)
\end{split}
\]


Using the basis 
\[
x^2\partial_x,x\partial_y
\]
for $1$-tensors, we can write $K^\flat(0,y,X,Y)$ in matrix form
\begin{equation}\label{schwartkernelT1}
\begin{split}
K^\flat(0,y,X,Y)=&e^{-\mathsf{F}X}|Y|^{-n+1}\chi(S)\left(
\begin{array}{cc}
1 & 0\\
-S\hat{Y}& I-\hat{Y}\langle\hat{Y},\,\cdot\,\rangle
\end{array}
\right)\left(\begin{array}{cc}
1 &\alpha |Y|\langle \hat{Y},\,\cdot\,\rangle\\
0 &I
\end{array}\right)\\
&\quad\quad\quad\quad\quad\quad\quad\quad\left(
\begin{array}{cc}
1 & -(S+2\alpha|Y|)\langle \hat{Y},\,\cdot\,\rangle\\
0&  I-\hat{Y}\langle\hat{Y},\,\cdot\,\rangle
\end{array}
\right)
\\
=&e^{-\mathsf{F}X}|Y|^{-n+1}\chi(S)\left(
\begin{array}{cc}
1 & -(S+2\alpha|Y|)\langle \hat{Y},\,\cdot\,\rangle\\
-S\hat{Y} & I+(S(S+2\alpha|Y|)-1)\hat{Y}\langle\hat{Y},\,\cdot\,\rangle
\end{array}
\right).
\end{split}
\end{equation}

The principal symbol $\sigma_p(N_\mathsf{F})(0,y,\xi,\eta)$ of $K^\flat(0,y,X,Y)$ at fiber infinity can be obtained by integrating the restriction of the Schwartz kernel to the front face, $|Y|=0$, after removing the singular factor $|Y|^{-n+1}$, along the $(\xi,\eta)$-equatorial sphere. 
Therefore, we only need to integrate
\begin{equation}\label{T10_matrix}
\chi(\tilde{S})\left(
\begin{array}{cc}
1 & -\tilde{S}\langle \hat{Y},\,\cdot\,\rangle\\
-\tilde{S}\hat{Y} & I+(\tilde{S}^2-1)\hat{Y}\langle\hat{Y},\,\cdot\,\rangle
\end{array}
\right).\end{equation}
with $\tilde{S}=\frac{X}{|Y|}$.
We need to integrate
with $(\tilde{S},\hat{Y})$ running through the $(\xi,\eta)$-equatorial sphere $\xi\tilde{S}+\eta\cdot\hat{Y}=0$. For $\chi\geq 0$, this matrix is a positive semidefinite.

In the following, we take $\tilde{S}$ sufficiently close to $0$ so that $\chi(\tilde{S})=1$, and thus only need to consider the matrix
 \[
 \left(
\begin{array}{cc}
1 & -\tilde{S}\langle \hat{Y},\,\cdot\,\rangle\\
-\tilde{S}\hat{Y} & I+(\tilde{S}^2-1)\hat{Y}\langle\hat{Y},\,\cdot\,\rangle
\end{array}
\right)
 \]
For any $v=(v_0,v')$ with $v'\in\mathbb{R}^{n-1}$, take $(\tilde{S},\hat{Y})$ satisfying $\xi\tilde{S}+\eta\cdot\hat{Y}=0$. If $v=(v_0,v')$ belongs to the kernel to the above matrix, then
\begin{equation}\label{eq_v0vp}
\begin{split}
v_0-\tilde{S}\langle\hat{Y},v'\rangle =0,\\
-\tilde{S}v_0\hat{Y}+v'+(\tilde{S}^2-1)\langle\hat{Y},v'\rangle\hat{Y}=0.
\end{split}
\end{equation}
Taking $\tilde{S}=0$ and a unit vector $\hat{Y}=\hat{\eta}^\perp$ orthogonal to $\eta$, we can conclude that $v_0=0$ and $v'=\langle\hat{\eta}^\perp,v'\rangle\hat{\eta}^\perp$. If $\eta=0$, $\hat{\eta}^\perp$ can be any unit vector, and so we can already conclude that $v'=0$. Now we assume that $\eta\neq 0$. If $\xi\neq 0$, we take $\hat{Y}=\epsilon\hat{\eta}+\sqrt{1-\epsilon^2}\hat{\eta}^\perp$ and $\tilde{S}=-\frac{\epsilon|\eta|}{\xi}$ with $\epsilon$ small enough; if $\xi=0$, we take $\hat{Y}=\hat{\eta}^\perp$ and an arbitrary $\tilde{S}\neq 0$ small enough. Then we get from the first equation in \eqref{eq_v0vp} that
\[
\langle \hat{Y},v'\rangle=0.
\]
Then $v'=0$ by the second equation in \eqref{eq_v0vp}. Combined with the result $v_0=0$, we conclude $v=0$.

 Therefore the integral of the matrix \eqref{T10_matrix} along the equatorial sphere is positive definite. So the boundary principal symbol of $N_\mathsf{F}$ is elliptic at fiber infinity, and by continuity, the principal symbol of $N_\mathsf{F}$ is elliptic at fiber inifinity when $x$ is sufficiently close to $0$.
\end{proof}

\begin{lemma}
The operator $N_\mathsf{F}$ is elliptic at finite points of $^{sc}T^*X$ in $\widetilde{M}$ for $\mathsf{F}>0$ sufficiently large.
\end{lemma}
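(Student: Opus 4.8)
The plan is to compute the \emph{scattering principal symbol} $\sigma_{sc}(N_\mathsf{F})$ at the base infinity $x=0$, i.e. the Fourier transform in $(X,Y)$ of the restricted kernel $K^\flat(0,y,X,Y)$ (without removing the factor $|Y|^{-n+1}$, since now we keep the full kernel and integrate over all of $(X,Y)$-space), evaluated at a covariable $(\xi,\eta)$, and show that for $\mathsf{F}>0$ large this matrix-valued symbol is positive definite for every $(\xi,\eta)$, including $(\xi,\eta)=0$. Concretely, I would write
\[
\sigma_{sc}(N_\mathsf{F})(0,y,\xi,\eta)=\int e^{-\i(X\xi+Y\cdot\eta)}K^\flat(0,y,X,Y)\,\d X\,\d Y,
\]
with $K^\flat(0,y,X,Y)$ given by the matrix formula \eqref{schwartkernelT1}. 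The key point, exactly as in \cite{UV,stefanov2018inverting}, is that the exponential weight $e^{-\mathsf{F}X/(1+xX)}$, which at $x=0$ is $e^{-\mathsf{F}X}$, makes the $X$-integral converge and, after the change of variables suggested by $S=\frac{X-\alpha|Y|^2}{|Y|}$, produces a Gaussian-type localization in the relevant variables as $\mathsf{F}\to\infty$.

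The steps, in order: (1) record $K^\flat(0,y,X,Y)$ from \eqref{schwartkernelT1}, noting that it is a sum over $\pm$ and that the $\chi(S)$ cutoff restricts $S$ (hence $X$) to a small neighborhood of $0$; (2) change variables from $(X,Y)$ to $(S,Y)$ or to polar-type coordinates $(|Y|,\hat Y, S)$, so that $X=\alpha(\hat Y)|Y|^2+S|Y|$ and the oscillatory phase becomes $X\xi+Y\cdot\eta=|Y|(S\xi+\hat Y\cdot\eta)+\alpha(\hat Y)|Y|^2\xi$; (3) perform the $|Y|$-integral — this is where the $e^{-\mathsf{F}X}=e^{-\mathsf{F}(\alpha|Y|^2+S|Y|)}$ factor is essential, giving for each fixed $(S,\hat Y)$ an integral of $|Y|^{-n+1}\cdot|Y|^{n-2}\,\d|Y| = |Y|^{-1}\,\d|Y|$ against $e^{-\i|Y|(S\xi+\hat Y\cdot\eta)}e^{-\mathsf{F}(\alpha|Y|^2+S|Y|)}$ times the smooth matrix factor; (4) observe that as $\mathsf{F}\to\infty$, after rescaling $|Y|\mapsto |Y|/\sqrt{\mathsf{F}}$ (or $|Y|\mapsto |Y|/\mathsf{F}$ on the $S|Y|$ scale), the symbol concentrates and its leading behavior is governed by the value of the matrix in \eqref{schwartkernelT1} at $|Y|=0$, namely the \emph{same} positive semidefinite matrix \eqref{T10_matrix} that appeared in the fiber-infinity computation, now integrated over the full sphere of directions $\hat Y$ rather than just the equatorial sphere; (5) conclude positivity of $\sigma_{sc}(N_\mathsf{F})$ by the argument already used in Lemma \ref{ellipticity10fiber} — the only way a vector $v=(v_0,v')$ can be annihilated is if it is annihilated for all admissible $(\tilde S,\hat Y)$, and since at base infinity \emph{all} directions $\hat Y$ (not just those on an equatorial sphere) contribute, this forces $v=0$ a fortiori; (6) handle the uniformity in $(\xi,\eta)$, in particular the point $(\xi,\eta)=0$ where the oscillatory factor is trivial and positivity is most transparent, and a compactness/continuity argument in $\hat\zeta=(\xi,\eta)/|(\xi,\eta)|$ together with the large-$\mathsf{F}$ decay of the off-diagonal (oscillatory) contributions for $(\xi,\eta)$ bounded away from $0$.

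I expect the main obstacle to be step (4): making precise the claim that for $\mathsf{F}$ large the scattering symbol is a small perturbation of a manifestly positive definite model operator, \emph{uniformly} down to $x=0$ and uniformly in $(\xi,\eta)$ over the (non-compact) fiber. The delicate part is that the weight $e^{-\mathsf{F}X}$ only controls $X>0$, while $\chi(S)$ is even and allows $S<0$; one must check that the quadratic term $\alpha(\hat Y)|Y|^2$ (with $\alpha$ coming from the second fundamental form / the function $\mathsf{F}$'s convexity, and which one may assume $\ge c>0$ after shrinking the neighborhood, using strict concavity of the level sets of $\tilde x$) dominates for large $|Y|$ so that the $|Y|$-integral genuinely converges and the stationary-phase / Laplace analysis applies. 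This is precisely the mechanism of \cite{UV} and \cite{stefanov2018inverting}, so I would follow their normalization: choose $\mathsf{F}$ so that $\mathsf{F}$ composed with the flow has a strict local minimum, absorb the resulting Gaussian, and reduce to the positivity statement already proved at fiber infinity. The matrix bookkeeping of the two projection factors $(\mathrm{Id}-\cdots)$ in $K^\flat$ is routine once one uses $h_{ij}=\delta_{ij}$ at the point of interest and the explicit limits $(\Xi'_x,\Xi'_y)|_{x=0}=(0,h(\hat Y))$ and $\mathcal T_\pm|_{x=0}$ recorded above.
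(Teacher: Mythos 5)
Your overall plan — compute $\sigma_{sc}(N_\mathsf{F})(0,y,\xi,\eta)$ by Fourier transforming $K^\flat(0,y,X,Y)$ in $(X,Y)$, and view $h=\mathsf{F}^{-1}$ semiclassically — is the paper's, but steps (4)--(5) contain a genuine error that derails the argument.

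First, a point you correctly flag as a potential obstacle but do not resolve: the closed-form Fourier computation goes through only after replacing the compactly supported $\chi$ by a Gaussian, $\chi(s)=e^{-s^2/(2\nu)}$ with the specific width $\nu=\mathsf{F}^{-1}\alpha$, chosen precisely so that the cross term $e^{\mathrm i\alpha(\xi+\mathrm i\mathsf F)|Y|^2}\,\hat\chi\bigl((-\xi-\mathrm i\mathsf F)|Y|\bigr)$ collapses to $c\sqrt\nu\,e^{-\nu(\xi^2+\mathsf F^2)|Y|^2/2}$. Without this tuning the $|Y|$-integral does not reduce to an explicit Gaussian and the semiclassical structure is not transparent. (One then recovers the actual compactly supported $\chi$ by approximation, as in Stefanov--Uhlmann--Vasy.)

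Second, and more seriously, the reduction in steps (4)--(5) is not to ``the same positive semidefinite matrix \eqref{T10_matrix} integrated over the full sphere rather than the equatorial sphere.'' After the $X$- and $|Y|$-Fourier transforms, each factor of $S$ and $|Y|$ in the kernel becomes a derivative $D$ acting on the Gaussian, which, evaluated, replaces the \emph{real, free} variable $\tilde S$ of the fiber-infinity computation by the \emph{complex-valued function}
\[
\tilde S \;\rightsquigarrow\; -(\xi_\mathsf F\pm\mathrm i)\,\frac{\hat Y\cdot\eta_\mathsf F}{\xi_\mathsf F^2+1},
\]
with the sign of $\pm\mathrm i$ differing between the two projection factors. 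The resulting matrix is \eqref{matrix_T1_fp}, not \eqref{T10_matrix}. Three features of \eqref{matrix_T1_fp} are essential and absent from your reduction: (i) the effective $\tilde S$ is \emph{determined} by $\hat Y$ and $(\xi_\mathsf F,\eta_\mathsf F)$, not free, so you cannot sweep it over a line; (ii) it is complex, and the imaginary part $\pm\mathrm i$ coming from the exponential weight $e^{-\mathsf F/x}$ is exactly what makes the symbol nondegenerate at $(\xi,\eta)=0$, where the fiber-infinity argument gives nothing; (iii) there is no equatorial constraint $\xi\tilde S+\eta\cdot\hat Y=0$. Consequently the ``a fortiori'' inference fails: the set of $(\tilde S,\hat Y)$ you actually have access to is neither a superset nor a subset of the fiber-infinity set, and positivity must be proved by a fresh kernel analysis of \eqref{matrix_T1_fp}. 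That analysis (take $\hat Y\perp\eta_\mathsf F$ to kill $v_x$ and pin $v_y$ parallel to $\hat Y$; when $\eta_\mathsf F=0$ the complex $\pm\mathrm i$ forces $v_x=0$ outright; for $\eta_\mathsf F\neq0$ perturb $\hat Y=\epsilon\hat\eta_\mathsf F+\sqrt{1-\epsilon^2}\hat\eta_\mathsf F^\perp$ and take $\epsilon$-derivatives) parallels, but does not follow from, Lemma~\ref{ellipticity10fiber}. You would need to carry it out.

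A small bookkeeping remark: in your step (3) the Jacobian for $(X,Y)\mapsto(S,|Y|,\hat Y)$ with $X=S|Y|+\alpha|Y|^2$ is $|Y|^{n-1}\,\mathrm dS\,\mathrm d|Y|\,\mathrm d\hat Y$, which exactly cancels the $|Y|^{-n+1}$ in the kernel; the leftover $|Y|^{-1}$ in your count came from dropping the $|Y|$ factor from $\mathrm dX=|Y|\,\mathrm dS$.
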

\begin{proof}
We calculate the $(X,Y)$-Fourier transform of the kernel on the front face $x=0$
\[
\begin{split}
K^\flat(0,y,X,Y)=&e^{-\mathsf{F}X}\chi(S)|Y|^{-n+1}\left(
\begin{array}{cc}
1 & -(S+2\alpha|Y|)\langle \hat{Y},\,\cdot\,\rangle\\
-S\hat{Y} & I+(S(S+2\alpha|Y|)-1)\hat{Y}\langle\hat{Y},\,\cdot\,\rangle
\end{array}
\right).
\end{split}
\]

Recall that $\chi$ is chosen to be a compactly supported function. However, as in \cite{stefanov2018inverting}, we can take a Gaussian $\chi_0$ to do the actual computation, and then approximate $\chi$ by a sequence $\chi_k=\phi(\cdot/k)\chi_0$, where $\phi\in C_c^\infty(\mathbb{R})$, $\phi\geq 0$.

Taking $\chi(s)=e^{-s^2/(2\nu(\hat{Y}))}$, so $\hat{\chi}(\cdot)=c\sqrt{\nu}e^{-\nu|\cdot|^2/2}$, we get the $X$-Fourier transform of $K^\flat(0,y,X,Y)$
\[
\begin{split}
&\mathcal{F}_XK^\flat(0,y,\xi,Y)\\
=&|Y|^{2-n}e^{-\mathrm{i}\alpha(-\xi-\mathrm{i}\mathsf{F})|Y|^2}\left(
\begin{array}{cc}
1& -(-D_\sigma+2\alpha|Y|)\langle \hat{Y},\,\cdot\,\rangle\\
-(-D_\sigma)\hat{Y} & \mathrm{Id}+((-D_\sigma)(-D_\sigma+2\alpha|Y|)-1)\hat{Y}\langle\hat{Y},\,\cdot\,\rangle
\end{array}
\right)\hat{\chi}((-\xi-\mathrm{i}\mathsf{F})|Y|)\\
=&c\sqrt{\nu}|Y|^{2-n}e^{\mathrm{i}\alpha(\xi+\mathrm{i}\mathsf{F})|Y|^2}\left(
\begin{array}{cc}
1& -(-D_\sigma+2\alpha|Y|)\langle \hat{Y},\,\cdot\,\rangle\\
-(-D_\sigma)\hat{Y} & \mathrm{Id}+((-D_\sigma)(-D_\sigma+2\alpha|Y|)-1)\hat{Y}\langle\hat{Y},\,\cdot\,\rangle
\end{array}
\right) e^{-\nu(\xi+\mathrm{i}\mathsf{F})^2|Y|^2/2},
\end{split}
\]
with some positive constant $c$, and $D_\sigma$ differentiating the argument of $\hat{\chi}$.

We take $\nu=\mathsf{F}^{-1}\alpha$, then
\[
e^{\mathrm{i}\alpha(\xi+\mathrm{i}\mathsf{F})|Y|^2}e^{-\nu(\xi+\mathrm{i}\mathsf{F})^2|Y|^2/2}=e^{-\nu(\xi^2+\mathsf{F}^2)|Y|^2/2}.
\]
We denote
\[
\phi(\xi,\hat{Y})=\nu(\xi^2+\mathsf{F}^2)
\]
in the following.
For computing the $Y$-Fourier transform, we only need to compute the following integral
\[
\begin{split}
\int_{\mathbb{S}^{n-2}}\int_0^\infty &\sqrt{\nu}|Y|^{2-n}e^{\mathrm{i}|Y|\hat{Y}\cdot\eta}\left(
\begin{array}{cc}
1& -(-D_\sigma+2\alpha|Y|)\langle \hat{Y},\,\cdot\,\rangle\\
-(-D_\sigma)\hat{Y} & \mathrm{Id}+((-D_\sigma)(-D_\sigma+2\alpha|Y|)-1)\hat{Y}\langle\hat{Y},\,\cdot\,\rangle
\end{array}
\right)\\
&e^{-\nu(\xi+\mathrm{i}\mathsf{F})^2|Y|^2/2}|Y|^{n-2}\mathrm{d}|Y|\mathrm{d}\hat{Y},
\end{split}
\]
where $D_\sigma$ is differentiating the argument of $\hat{\chi}$.

Consider $h=\mathsf{F}^{-1}$ as a semiclassical parameter, and rescale $(\xi_\mathsf{F},\eta_\mathsf{F})=\mathsf{F}^{-1}(\xi,\eta)$.
The above integral can be written as
\[
\begin{split}
&\int_{\mathbb{S}^{n-2}}\int_0^\infty \sqrt{\nu}e^{\mathrm{i}|Y|\hat{Y}\cdot\eta}\left(
\begin{array}{cc}
1&-(\mathrm{i}\nu(\xi+\mathrm{i}\mathsf{F})+2\alpha)|Y|\langle \hat{Y},\,\cdot\,\rangle\\
-(\mathrm{i}\nu(\xi+\mathrm{i}\mathsf{F}))|Y|\hat{Y} & \mathrm{Id}+((\mathrm{i}\nu(\xi+\mathrm{i}\mathsf{F}))(\mathrm{i}\nu(\xi+\mathrm{i}\mathsf{F})+2\alpha)|Y|^2-1)\hat{Y}\langle\hat{Y},\,\cdot\,\rangle
\end{array}
\right)\\
&\quad\quad\quad\quad\quad\quad e^{-\phi|Y|^2/2}\mathrm{d}|Y|\mathrm{d}\hat{Y}+\mathrm{l.o.t.}.
\end{split}
\]
Here the above lower order term in $h$ can be ignored if we consider $N_\mathsf{F}$ as a semiclassical (scattering) pseudodifferential operator.

Notice that $\nu(\xi+\mathrm{i}\mathsf{F}))+2\alpha=\nu(\xi-\mathrm{i}\mathsf{F})=\alpha(\xi_\mathsf{F}-\mathrm{i})$, the above integral simplifies to
\[
\begin{split}
&\int_{\mathbb{S}^{n-2}}\int_0^\infty \sqrt{\nu}e^{\mathrm{i}|Y|\hat{Y}\cdot\eta}\left(
\begin{array}{cc}
1&-\mathrm{i}\nu(\xi-\mathrm{i}\mathsf{F})|Y|\langle \hat{Y},\,\cdot\,\rangle\\
-\mathrm{i}\nu(\xi+\mathrm{i}\mathsf{F})|Y|\hat{Y} & \mathrm{Id}+(-\nu^2(\xi^2+\mathsf{F}^2)|Y|^2-1)\hat{Y}\langle\hat{Y},\,\cdot\,\rangle
\end{array}
\right)\\
&\quad\quad\quad\quad \quad\quad  e^{-\phi|Y|^2/2}\mathrm{d}|Y|\mathrm{d}\hat{Y}+\mathrm{l.o.t.}.
\end{split}
\]

Extend the integral in $|Y|$ to $\mathbb{R}$, replacing it by a variable $t$ and use the fact that the integrand is invariant under the joint change of variables $t\rightarrow -t$ and $\hat{Y}\rightarrow -\hat{Y}$. This gives
\[
\begin{split}
\int_{\mathbb{S}^{n-2}}\int_\mathbb{R} \sqrt{\nu}e^{\mathrm{i}t\hat{Y}\cdot\eta}\left(
\begin{array}{cc}
1&-\mathrm{i}\nu(\xi-\mathrm{i}\mathsf{F})t\langle \hat{Y},\,\cdot\,\rangle\\
-\mathrm{i}\nu(\xi+\mathrm{i}\mathsf{F})t\hat{Y} & \mathrm{Id}+(-\nu^2(\xi^2+\mathsf{F}^2) t^2-1)\hat{Y}\langle\hat{Y},\,\cdot\,\rangle
\end{array}
\right)e^{-\phi t^2/2}\mathrm{d}t\mathrm{d}\hat{Y}
\end{split}
\]

Now the $t$ integral is a Fourier transform evaluated at $-\hat{Y}\cdot\eta$, under which multiplication by $t$ becomes $D_{\hat{Y}\cdot\eta}$. Note also that the Fourier transform of $e^{-\phi t^2/2}$ is a constant multiple of
\[
\phi^{-1/2}e^{-(\hat{Y}\cdot\eta)^2/(2\phi)}.
\]
We are left with
\begin{equation}\label{principalsymbolYintegral}
\begin{split}
\int_{\mathbb{S}^{n-2}} &\sqrt{\nu}\phi(\xi,\hat{Y})^{-1/2}\left(
\begin{array}{cc}
1&-\mathrm{i}\nu(\xi-\mathrm{i}\mathsf{F})D_{\hat{Y}\cdot\eta}\langle \hat{Y},\,\cdot\,\rangle\\
-\mathrm{i}\nu(\xi+\mathrm{i}\mathsf{F})D_{\hat{Y}\cdot\eta}\hat{Y} & \mathrm{Id}+(-\nu^2(\xi^2+\mathsf{F}^2) D_{\hat{Y}\cdot\eta}^2-1)\hat{Y}\langle\hat{Y},\,\cdot\,\rangle
\end{array}
\right)\\
&\quad\quad\quad\quad\quad\quad\quad\quad\quad e^{-(\hat{Y}\cdot\eta)^2/(2\phi(\xi,\hat{Y}))}\mathrm{d}\hat{Y}\\
=\int_{\mathbb{S}^{n-2}} &h^{-1}(\xi_\mathsf{F}^2+1)^{-1/2}\left(\begin{array}{cc}
1 & (\xi_\mathsf{F}-\mathrm{i})\frac{\hat{Y}\cdot\eta_\mathsf{F}}{\xi_\mathsf{F}^2+1}\langle\hat{Y},\,\cdot\,\rangle\\
(\xi_\mathsf{F}+\mathrm{i})\frac{\hat{Y}\cdot\eta_\mathsf{F}}{\xi^2_\mathsf{F}+1}\hat{Y} &\mathrm{Id}+(\frac{(\hat{Y}\cdot\eta_\mathsf{F})^2}{\xi_\mathsf{F}^2+1}-1) \hat{Y}\langle\hat{Y},\,\cdot\,\rangle
\end{array}
\right) \\
&\quad\quad\quad\quad\quad\quad\quad\quad\quad e^{-(\hat{Y}\cdot\eta_\mathsf{F})^2/(2h\phi_\mathsf{F}(\xi_\mathsf{F},\hat{Y}))}\mathrm{d}\hat{Y},
\end{split}
\end{equation}
where
\[
\phi_\mathsf{F}(\xi_\mathsf{F},\hat{Y})=\alpha(\xi_\mathsf{F}^2+1).
\]

If $v=(v_x,v_y)$ belongs to the kernel of the matrix

\begin{equation}\label{matrix_T1_fp}
\left(\begin{array}{cc}
1 & (\xi_\mathsf{F}-\mathrm{i})\frac{\hat{Y}\cdot\eta_\mathsf{F}}{\xi_\mathsf{F}^2+1}\langle\hat{Y},\,\cdot\,\rangle\\
(\xi_\mathsf{F}+\mathrm{i})\frac{\hat{Y}\cdot\eta_\mathsf{F}}{\xi^2_\mathsf{F}+1}\hat{Y} &\mathrm{Id}+(\frac{(\hat{Y}\cdot\eta_\mathsf{F})^2}{\xi_\mathsf{F}^2+1}-1) \hat{Y}\langle\hat{Y},\,\cdot\,\rangle
\end{array}
\right),
\end{equation}
 then
\begin{eqnarray}
v_x+(\xi_\mathsf{F}-\mathrm{i})\frac{\hat{Y}\cdot\eta_\mathsf{F}}{\xi_\mathsf{F}^2+1}\langle\hat{Y},v_y\rangle&=0,\label{eq_kernel1}\\
(\xi_\mathsf{F}+\mathrm{i})\frac{\hat{Y}\cdot\eta_\mathsf{F}}{\xi^2_\mathsf{F}+1}v_x\hat{Y}+ v_y+\left(\frac{(\hat{Y}\cdot\eta_\mathsf{F})^2}{\xi_\mathsf{F}^2+1}-1\right)\langle\hat{Y},v_y\rangle\hat{Y}&=0. \label{eq_kernel2}
\end{eqnarray}

Taking $\hat{Y}$ orthogonal to $\eta_\mathsf{F}$, i.e. $\hat{Y}\cdot\eta_\mathsf{F}=0$, then we immediately get $v_x=0$ from \eqref{eq_kernel1}, and \eqref{eq_kernel2} simplifies to
\[
v_y-\langle\hat{Y},v_y\rangle\hat{Y}=0,
\]
which means that $v_y\cdot\eta_\mathsf{F}=0$. If $\eta_\mathsf{F}=0$, then $\hat{Y}$ can be any unit-length vector. Then we can already conclude that $v_y=0$.

%
%
Now assume that $\eta_\mathsf{F}\neq 0$. Then taking $\hat{Y}=\epsilon \hat{\eta}_\mathsf{F}+\sqrt{1-\epsilon^2}\hat{\eta}_\mathsf{F}^\perp$, where $\epsilon$ is sufficiently close to $0$, $\hat{\eta}_\mathsf{F}^\perp$ is an arbitrary vector of unit length orthogonal to $\eta_\mathsf{F}$, in  \eqref{eq_kernel1}, we get
\[
\langle\epsilon \hat{\eta}_\mathsf{F}+\sqrt{1-\epsilon^2}\hat{\eta}_\mathsf{F}^\perp ,v_y\rangle=0.
\]
Taking second order derivative in $\epsilon$ at $\epsilon=0$, we get
\[
\langle \hat{\eta}_\mathsf{F}^\perp,v_y\rangle =0,
\]
and then
\[
v_x=0,\quad v_y=0.
\]
This proves that the integral of above matrix is positive definite. This shows that the principal symbol of $N_\mathsf{F}$ is elliptic at base infinity.
\end{proof}

The ellipticity proved by above two lemmas implies that there exists a local parametrix $P\in\Psi_{sc}^{1,0}(X)$ such that
\[
PN_\mathsf{F}=\mathrm{Id}+R
\]
with $R\in\Psi_{sc}^{0,0}(X)$, locally, smoothing, over some open subset of $X$ containing $\Omega_c$. Furthermore, $R$ is small for sufficiently small $c$ as an operator on some appropriate Sobolev spaces $H^{s,r}_{sc}(X)$ corresponding to the scattering structure. See \cite[Section 2]{UV} for more details.
 Therefore, we can show the following local invertibility result as in \cite{UV}.
\begin{theorem}
Assume $\partial M$ is strictly convex at $p\in\partial M$. There exists a function $\tilde{x}\in C^\infty(\widetilde{M})$ with $O_p=\{\tilde{x}>-c\}\cap M$ for sufficiently small $c>0$, such that a $1$-tensor $f$ can be uniquely determined by $T_1f$ restricted to $O_p$-local geodesics.
\end{theorem}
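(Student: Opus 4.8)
The plan is to upgrade the full ellipticity of $N_{\mathsf{F}}$ — established in the two lemmas and recorded in the parametrix identity $PN_{\mathsf{F}}=\mathrm{Id}+R$ above — to an honest left inverse of $N_{\mathsf{F}}$, and then to strip off the conjugation by $e^{\mathsf{F}/x}$ and the cutoff $\chi$ so as to read off injectivity of $T_1$ on $O_p$-local geodesics. The geometric setup is the one fixed in Section~\ref{section2}: since $\partial M$ is strictly convex at $p$, choose $\tilde{x}\in C^\infty(\widetilde{M})$ with $\tilde{x}(p)=0$, $\mathrm{d}\tilde{x}(p)=-\mathrm{d}\rho(p)$, $\mathrm{d}\tilde{x}\neq 0$ near $p$, and with strictly concave level sets from the side of the super-level sets; for $c>0$ small, $O_p=\{\tilde{x}>-c\}\cap M$ is a thin lens near $p$, $x=\tilde{x}+c$ is a scattering boundary defining function on $X=\{x\geq 0\}$, and, because $|\lambda|\leq C\sqrt{x}$ for small $C$ keeps $\gamma_{x,y,\lambda,\omega}$ inside $X$ for as long as it lies in $M$, the cutoff $\chi(\lambda/x)$ forces $LT_1$ to integrate $T_1 f$ only over $O_p$-local geodesics.

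By the two lemmas, $N_{\mathsf{F}}\in\Psi_{sc}^{-1,0}(X)$ is fully elliptic over a neighborhood $O$ of $\overline{\Omega_c}$ provided $\mathsf{F}$ is large, hence admits a parametrix $P\in\Psi_{sc}^{1,0}(X)$ with $PN_{\mathsf{F}}=\mathrm{Id}+R$, $R\in\Psi_{sc}^{0,0}(X)$ smoothing and, as recalled just before the theorem, small on the scattering Sobolev spaces $H^{s,r}_{sc}(X)$ of functions supported near $x=0$ once $c$ is small enough. Then $\mathrm{Id}+R$ is invertible by a Neumann series, so $(\mathrm{Id}+R)^{-1}P$ is a left inverse of $N_{\mathsf{F}}$ on those spaces, yielding both uniqueness and an explicit reconstruction.

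To conclude, let $f$ be a $1$-tensor and suppose $T_1 f$ vanishes on every $O_p$-local geodesic; by the localization just described, $LT_1 f=0$ on $O$. Set $u=e^{-\mathsf{F}/x}f|_{O}$; since $f$ is smooth (hence bounded) and $e^{-\mathsf{F}/x}$ is rapidly decreasing at $x=0$, one has $u\in H^{s,r}_{sc}(X)$ for all $s,r$, supported near $x=0$, and $N_{\mathsf{F}}u=e^{-\mathsf{F}/x}LT_1 f=0$. The left invertibility of $N_{\mathsf{F}}$ then forces $u=0$, i.e.\ $f\equiv 0$ on $O_p$. By linearity this shows that $f|_{O_p}$ is uniquely determined by $T_1 f$ restricted to $O_p$-local geodesics — and constructively so, by applying $e^{\mathsf{F}/x}(\mathrm{Id}+R)^{-1}P\,e^{-\mathsf{F}/x}$ to $LT_1 f$.

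The step I expect to be most delicate is the quantitative smallness of the remainder $R$ behind the inversion of $\mathrm{Id}+R$: the parametrix by itself only gives a Fredholm statement, and it is the thinness of the lens $O_p$, i.e.\ $c$ small, that forces $\|R\|_{H^{s,r}_{sc}\to H^{s,r}_{sc}}\to 0$ (the exponential weight $e^{\mathsf{F}/x}$ with $\mathsf{F}$ large playing the separate role of making the base-infinity symbol positive in the second lemma). Checking that the scattering-calculus error estimates degrade in a controlled way as the support shrinks is the technical heart; one follows \cite[Section~2]{UV} essentially verbatim, the new input being only the symbolic positivity computed in the two lemmas above.
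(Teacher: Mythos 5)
Your proof is correct and follows essentially the same approach the paper takes (and defers to \cite{UV} for): full ellipticity of $N_\mathsf{F}$ from the two lemmas yields a parametrix with remainder $R$, which is made small by shrinking $c$, giving a Neumann-series left inverse and hence injectivity. The only minor imprecision — $u=e^{-\mathsf{F}/x}f$ extended by zero across $\partial M$ is not in $H^{s,r}_{sc}$ for all $s$, since the exponential weight decays only at the artificial boundary $x=0$ and not at $\partial M$ — is inessential, because the argument only needs a single Sobolev level on which $R$ acts as a small operator.
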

This local result lead immediately to the following global result using a layering stripping scheme \cite{UV}.
\begin{theorem}
Assume $\partial M$ is strictly convex and $(M,g)$ admits a smooth strictly convex function. If
\[
T_1f(\gamma,\eta)=0
\]
for any geodesic $\gamma$ in $M$ with endpoints on $\partial M$, and $\eta$ a parallel vector field conormal to $\gamma$. Then $f=0$.
\end{theorem}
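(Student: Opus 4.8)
The plan is to deduce the global statement from the local Theorem via a layer stripping argument, exactly as in \cite{UV} for $I_0$ and as in \cite{stefanov2018inverting} for $I_1$. The hypothesis that $(M,g)$ admits a smooth strictly convex function $\phi$ furnishes, after a small perturbation, a function whose superlevel sets $\{\phi \ge c\}$ foliate $M$ (up to a finite-dimensional set of critical levels that can be handled separately), with each level set strictly convex. The key point is that the local result is stated for boundary points, but convexity of the level sets means that, relative to the region $\{\phi \ge c\}$ viewed as a manifold with boundary $\{\phi = c\}$, the geodesics local to a point of $\{\phi = c\}$ are honest geodesics of $(M,g)$, so the local injectivity of $T_1$ applies there verbatim.

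First I would set $c_0 = \min_M \phi$ and argue that the set of levels $c$ for which $T_1 f = 0$ on all geodesics implies $f = 0$ on a neighborhood of $\{\phi \ge c\}$ is open and closed in $(c_0, \max_M \phi]$, hence all of it. Openness: if $f$ vanishes on $\{\phi \ge c\}$ then for $c' $ slightly less than $c$ the region $\{\phi \ge c'\} \setminus \{\phi \ge c\}$ is a thin collar foliated by strictly convex hypersurfaces, and at each point $p$ of $\{\phi = c'\}$ the local Theorem (applied with $\tilde x$ built from $\phi$) recovers $f$ near $p$ from $O_p$-local geodesics — these geodesics only enter the region where $f$ is not yet known to vanish, so $T_1 f = 0$ there determines $f = 0$ near $p$; a compactness argument over $\{\phi = c'\}$ and the size of the collar being uniform gives $f = 0$ on $\{\phi \ge c''\}$ for some $c'' < c'$. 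Closedness is immediate since vanishing on each $\{\phi \ge c_n\}$ with $c_n \downarrow c$ gives vanishing on $\{\phi > c\}$, hence on $\{\phi \ge c\}$ by continuity. This shows $f = 0$ on $\{\phi > c_0\}$, and then $f = 0$ on all of $M$ by continuity (the level $c_0$ is where $\phi$ attains its interior minimum, a set of measure zero, but one can also rerun the argument from a second strictly convex function or simply note density).

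There are two technical points I would address carefully. One is the passage from ``$\partial M$ strictly convex at $p$'' in the local Theorem to ``$\{\phi = c\}$ strictly convex at $p$'' as the effective boundary: since the local Theorem is proved by constructing $\tilde x$ with strictly concave level sets near $p$ and working in $\Omega_c = \{\tilde x > -c, \rho \ge 0\}$, one takes $\rho$ adapted to the level set $\{\phi = c\}$ rather than $\partial M$, which is legitimate because the entire scattering-calculus construction in Sections~\ref{section2}--\ref{section3} only used strict convexity of the relevant hypersurface and short geodesics near it. The other point is uniformity: to conclude that finitely many charts cover $\{\phi = c\}$ with a uniform lower bound on the collar width $c - c''$, one uses compactness of the level sets and smooth dependence of the constant $c$ in the local Theorem on the base point, which in turn follows from the fact that the smallness condition on $c$ in the parametrix construction depends only on $C^k$ bounds of the geometry near $p$. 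I expect this uniformity bookkeeping, rather than any new analysis, to be the main obstacle — but it is entirely parallel to \cite[Section 2]{UV} and \cite{stefanov2018inverting}, so no genuinely new idea is needed.
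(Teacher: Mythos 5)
Your proposal is correct and follows exactly the route the paper has in mind: the paper's ``proof'' of this theorem consists of a single sentence invoking the layer-stripping scheme of \cite{UV}, and your open-closed argument along the foliation $\{\phi=c\}$, with the local theorem supplying openness, is precisely that scheme spelled out. The technical points you flag (treating $\{\phi=c\}$ as the effective boundary in the scattering construction, and uniformity of the collar width over the compact level set) are the same ones \cite{UV} and \cite{stefanov2018inverting} address, so no new idea is needed.
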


\section{Mixed ray transform of $1+1$ tensors}\label{section4}


Recall the mixed ray transform of a symmetric $(1,1)$-tensor is defined by
\[
L_{1,1}f(x,y,\lambda,\omega)=\int_{\mathbb{R}}\mathcal{T}_{\gamma_{x,y,\lambda,\omega}}^{0,t}P_{\dot{\gamma}_{x,y,\lambda,\omega}(t),\vartheta_{x,y,\omega}(t)}\Lambda_{\dot{\gamma}_{x,y,\lambda,\omega}(t)}f^{sc}({\gamma_{x,y,\lambda,\omega}(t)})\mathrm{d}t,
\]
for $f\in\mathcal{B}S^1_1TX$. Here $\vartheta_{x,y,\omega}$ are chosen as in previous section.
Then we define
\[
Lv(x,y)=\int\left(\mathrm{Id}-\frac{\omega\partial_y\otimes g_{sc}(\lambda\partial_x+\omega \partial_y)}{h(\omega\partial_y,\omega\partial_y)}\right)\chi(\lambda/x)v(x,y,\lambda,\omega)\otimes g_{sc}(\lambda\partial_x+\omega\partial_y)\mathrm{d}\lambda\mathrm{d}\omega,
\]
and
\[
N_\mathsf{F}=e^{-\mathsf{F}/x}LL_{1,1}e^{\mathsf{F}/x}.
\]

Then one can see that
\[
N_\mathsf{F}\in \Psi_{sc}^{-1,0}(X;\mathcal{B}^{sc}S_1^1{^{sc}T}X,\mathcal{B}^{sc}S_1^1{^{sc}T}X).
\]
At the front face $x=0$,  the kernel of $N_\mathsf{F}$ is
\[
\begin{split}
&K^\flat(0,y,X,Y)\\
=&e^{-\mathsf{F}X}|Y|^{-n+1}\chi(S)\left(\mathrm{Id}-\left(\hat{Y}\cdot(x\partial_y)\right)\left(S\frac{\mathrm{d}x}{x^2}+\hat{Y}\cdot\frac{\mathrm{d}y}{x}\right)\right)\otimes \left(S\frac{\mathrm{d}x}{x^2}+\hat{Y}\cdot\frac{\mathrm{d}y}{x}\right)\mathcal{T}(0,y,X,Y)\\
&\left(\left(\mathrm{Id}-\left((S+2\alpha |Y|)(x^2\partial_x)+\hat{Y}\cdot(x\partial_y)\right)\left(\hat{Y}\cdot\frac{\mathrm{d}y}{x}\right)\right)\otimes\left((S+2\alpha |Y|)(x^2\partial_x)+\hat{Y}\cdot(x\partial_y)\right)\right).
\end{split}
\]
Using the basis
\[
(x^2\partial_x)\otimes\frac{\mathrm{d}x}{x^2},\,(x^2\partial_x)\otimes\frac{\mathrm{d}y}{x},\,(x\partial_y)\otimes\frac{\mathrm{d}x}{x^2},\,(x\partial_y)\otimes\frac{\mathrm{d}y}{x}
\]
for $(1,1)$-tensors, we can write $f$ in a vector-form
\[
f=\left(
\begin{array}{c}
f^x_x\\
f^x_y\\
f^y_x\\
f^y_y
\end{array}
\right),
\]
that is,
\[
f=f^x_x(x^2\partial_x)\otimes\frac{\mathrm{d}x}{x^2}+f^x_y(x^2\partial_x)\otimes\frac{\mathrm{d}y}{x}+f^y_x(x\partial_y)\otimes\frac{\mathrm{d}x}{x^2}+f^y_y(x\partial_y)\otimes\frac{\mathrm{d}y}{x}.
\]
The trace of $f$ is then
\[
\mathrm{trace}(f)=f_x^x+\langle\mathrm{Id},f^y_y\rangle:=f_x^x+\sum_{j=1}^{n-1}f^{y^j}_{y^j}.
\]
Since $f\in \mathcal{B}S^1_1TM$, we have $\mathrm{trace}(f)=0$.\\

The kernel of $N_\mathsf{F}$ at the scattering front face $x=0$ can be written as a matrix
\begin{equation}\label{matrix_L11_fiber}
e^{-\mathsf{F}X}\chi(S)|Y|^{-n+1}\left(
\begin{array}{cc}
S & 0\\
\hat{Y}_2 &0\\
0 & S\\
0 &\hat{Y}_2
\end{array}
\right)
\mathfrak{M}\left(
\begin{array}{cccc}
S+2\alpha|Y| &\langle\hat{Y},\,\cdot\,\rangle_2 & 0 &0\\
0&0& S+2\alpha|Y| &\langle\hat{Y},\,\cdot\,\rangle_2
\end{array}
\right),
\end{equation}
where
\[
\mathfrak{M}=\left(
\begin{array}{cc}
1 & -(S+2\alpha|Y|)\langle \hat{Y},\,\cdot\,\rangle_1\\
-S\hat{Y}_1 & I+(S(S+2\alpha|Y|)-1)\hat{Y}\langle\hat{Y},\,\cdot\,\rangle_1
\end{array}
\right),
\]
which is the same as the matrix already appeared in \eqref{schwartkernelT1}.
Here the subscripts $1$ and $2$ mean that they are acting on the first (upper/contravariant) and second (lower/covariant) factors respectively.\\

\subsection{Ellipticity}
Note that $\mathrm{d}=\nabla$ is the gradient with respect to the metric $g$.
Define
\[
\mathrm{d}^\mathcal{B}_\mathsf{F}=e^{-\mathsf{F}/x}\mathrm{d}^\mathcal{B}e^{\mathsf{F}/x}:C^\infty(S^1{^{sc}T}X)\rightarrow C^\infty(\mathcal{B}S^1_1{^{sc}T}X),
\]
and $-\delta$ be the formal adjoint of the operator $\mathrm{d}$ with respect to $g_{sc}$ (not $g$).
Denote
\[
\delta_\mathsf{F}^\mathcal{B}:C^\infty(\mathcal{B}S^1_1{^{sc}T}X)\rightarrow C^\infty(S^1{^{sc}T}X),\quad \delta_\mathsf{F}^\mathcal{B}=\delta_\mathsf{F}=e^{\mathsf{F}/x}\delta e^{-\mathsf{F}/x}.
\]

\begin{lemma}
The boundary symbol of $N_\mathsf{F}\in \Psi_{sc}^{-1,0}(X;\mathcal{B}S^1_1{^{sc}T}X,\mathcal{B}S^1_1{^{sc}T}X)$ is elliptic at fiber infinity of $^{sc}T^*X$ when restricted to the kernel of the principal symbol of $\delta^\mathcal{B}_\mathsf{F}$.
\end{lemma}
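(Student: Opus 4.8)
The strategy mirrors the $T_1$ case (Lemma \ref{ellipticity10fiber}) but now we must work modulo the kernel of $\sigma_p(\delta^\mathcal{B}_\mathsf{F})$, since on all of $\mathcal{B}S^1_1{}^{sc}TX$ the operator $N_\mathsf{F}$ necessarily annihilates potential fields $\mathrm{d}^\mathcal{B}_\mathsf{F}v$ and hence cannot be fully elliptic. First I would compute the principal symbol $\sigma_p(N_\mathsf{F})(0,y,\xi,\eta)$ at fiber infinity by the same recipe as before: restrict the Schwartz kernel \eqref{matrix_L11_fiber} to the front face $|Y|=0$, strip the singular factor $|Y|^{-n+1}$, set $\chi(\tilde S)=1$ (taking $\tilde S$ small), and integrate the resulting rank-one-in-each-factor matrix over the equatorial sphere $\xi\tilde S+\eta\cdot\hat Y=0$. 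Since $\mathfrak{M}$ is exactly the $T_1$ matrix, at $|Y|=0$ the integrand factors as an outer product built from the two vectors $(\tilde S,\hat Y_2)^\top$ and $((\tilde S,\langle\hat Y,\cdot\rangle_2))$ sandwiching $\mathfrak{M}$ evaluated at $|Y|=0$; concretely the kernel is $\tilde S$- and $\hat Y$-bilinear in the covariant slot and governed by the transverse projector $I-\hat Y\langle\hat Y,\cdot\rangle$ in the contravariant slot. I would record this as a positive \emph{semidefinite} quadratic form $q_{\xi,\eta}(f)=\int_{\xi\tilde S+\eta\cdot\hat Y=0}|\langle \text{(kernel)}, f\rangle|^2$-type expression acting on the fibre $\mathcal{B}S^1_1$.

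Second, I would identify $\sigma_p(\delta^\mathcal{B}_\mathsf{F})$ at fibre infinity: it is (a constant multiple of) the map $f\mapsto$ contraction of $\mathrm{i}(\xi\,\tfrac{dx}{x^2}+\eta\cdot\tfrac{dy}{x})$ against the covariant index of $f$, followed by the projection $\mathcal{B}$ onto the trace-free part; call its kernel $\mathcal{K}_{\xi,\eta}\subset\mathcal{B}S^1_1{}^{sc}T_{(0,y)}X$. A dimension count: $\mathcal{B}S^1_1$ has dimension $n^2-1$, the symbol of $\delta^\mathcal{B}$ maps onto $S^1$ (dimension $n$) generically, and $\mathrm{d}^\mathcal{B}_\mathsf{F}$ has symbol with $n$-dimensional image, so $\mathcal{K}_{\xi,\eta}$ and $\mathrm{Im}\,\sigma(\mathrm{d}^\mathcal{B}_\mathsf{F})$ should be complementary. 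The claim to prove is that $q_{\xi,\eta}$ restricted to $\mathcal{K}_{\xi,\eta}$ is positive definite. The mechanism is the same as for $T_1$: if $f\in\mathcal{K}_{\xi,\eta}$ lies in the kernel of the integrated matrix, then $\langle(\text{kernel at each }(\tilde S,\hat Y)),f\rangle=0$ for all admissible $(\tilde S,\hat Y)$. Writing $f$ in the block form $(f^x_x,f^x_y,f^y_x,f^y_y)$, the vanishing conditions read (schematically) $\tilde S\,[\mathfrak{M}\text{-action on }(S+2\alpha|Y|,\langle\hat Y,\cdot\rangle)\text{-contracted }f]=0$ and $\hat Y_2$-component analogues, which at $|Y|=0$ reduce to polynomial identities in $(\tilde S,\hat Y)$. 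I would then specialize $(\tilde S,\hat Y)$ exactly as in the two lemmas above — first $\tilde S=0$, $\hat Y\perp\eta$ arbitrary, then $\hat Y=\epsilon\hat\eta+\sqrt{1-\epsilon^2}\hat\eta^\perp$, $\tilde S=-\epsilon|\eta|/\xi$ and differentiate in $\epsilon$ — to force successive components of $f$ to vanish, finally invoking the trace-free condition $f^x_x+\langle I,f^y_y\rangle=0$ and the constraint $f\in\mathcal{K}_{\xi,\eta}$ to kill the remaining freedom and conclude $f=0$.

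Third, by homogeneity and continuity the positivity of $q_{\xi,\eta}|_{\mathcal{K}_{\xi,\eta}}$ on the equatorial sphere, uniform in $(\xi,\eta)$ on the cosphere, upgrades to ellipticity of $\sigma_p(N_\mathsf{F})$ on $\ker\sigma_p(\delta^\mathcal{B}_\mathsf{F})$ for $x$ near $0$, which is the assertion.

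\textbf{Main obstacle.} The delicate point is not positivity of $q_{\xi,\eta}$ — that follows formally from $\chi\ge 0$ — but verifying that its \emph{only} null directions are exactly $\mathrm{Im}\,\sigma(\mathrm{d}^\mathcal{B}_\mathsf{F})$, so that the restriction to the complementary subspace $\mathcal{K}_{\xi,\eta}$ is strictly positive. This requires carefully untangling how the two projector-valued factors (the $P_{\dot\gamma,\vartheta}$-type projection in the contravariant slot and the $\Lambda_{\dot\gamma}$-pairing in the covariant slot) interact with the trace-free constraint, and checking that the choice of $\vartheta$ — in particular the reduction $H(0,y)\hat Y=\alpha(y)\hat Y$ via the special coordinates — makes the cross terms in \eqref{matrix_L11_fiber} behave so that no spurious null vector outside $\mathrm{Im}\,\sigma(\mathrm{d}^\mathcal{B})$ survives. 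Managing the bookkeeping of the $4$-block ($1+(n-1)+(n-1)+(n-1)^2$ dimensional) matrix while extracting the right sequence of specializations of $(\tilde S,\hat Y)$ is where the real work lies; the $T_1$ computation above is the $2$-block toy model that guides it.
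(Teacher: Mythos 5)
Your plan follows the paper's proof exactly: compute the front-face kernel \eqref{matrix_L11_fiber} with $\mathfrak{M}$ sandwiched between the two rectangular factors, restrict to $|Y|=0$ and integrate over the equatorial sphere, impose the trace-free and $\ker\sigma(\delta^\mathcal{B}_\mathsf{F})$ constraints \eqref{deltakernel11}, then eliminate the components of $f$ by specializing $\hat{Y}=\hat\eta^\perp$ at $\tilde S=0$, then $\hat Y=\epsilon\hat\eta+\sqrt{1-\epsilon^2}\hat\eta^\perp$ and differentiating in $\epsilon$, and finally closing the linear system with the trace condition. The only quibble is framing the target as proving $\ker q_{\xi,\eta}=\mathrm{Im}\,\sigma(\mathrm{d}^\mathcal{B}_\mathsf{F})$ plus a dimension count — the paper does not need this stronger characterization and simply shows directly that $\ker q_{\xi,\eta}\cap\ker\sigma(\delta^\mathcal{B}_\mathsf{F})=\{0\}$, which is both sufficient and easier.
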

\begin{proof}
Similar to the proof of Lemma \ref{ellipticity10fiber},
we need to show that the integral of \eqref{matrix_L11_fiber}, restricted to the front face $|Y|=0$, after removing the singular factor $|Y|^{-n+1}$, along the equatorial sphere corresponding to any $\zeta=(\xi,\eta)\neq 0$, is positive definite on the kernel of the principal symbol of $\delta^\mathcal{B}_\mathsf{F}$ at fiber infinity. First, it is easy to see that it is positive semidefinite.

So we only need to consider the matrix
\begin{equation}\label{matrix_L11_ff}
\left(
\begin{array}{cc}
\tilde{S} & 0\\
\hat{Y}_2 &0\\
0 & \tilde{S}\\
0 &\hat{Y}_2
\end{array}
\right)\left(
\begin{array}{cc}
1 & -\tilde{S}\langle \hat{Y},\,\cdot\,\rangle_1\\
-\tilde{S}\hat{Y}_1 & I+(\tilde{S}^2-1)\hat{Y}_1\langle\hat{Y},\,\cdot\,\rangle_1
\end{array}
\right)\left(
\begin{array}{cccc}
\tilde{S} &\langle\hat{Y},\,\cdot\,\rangle_2 & 0 &0\\
0&0 & \tilde{S} &\langle\hat{Y},\,\cdot\,\rangle_2
\end{array}
\right).
\end{equation}
If $f$ belongs to the kernel of above matrix, we have
\begin{equation}\label{kernel11eq}
\begin{split}
\tilde{S}f^x_x+\langle\hat{Y},f^x_y\rangle-\tilde{S}^2\langle\hat{Y},f^y_x\rangle -\tilde{S}\langle\hat{Y}\otimes\hat{Y},f^y_y\rangle=0,\\
-\tilde{S}^2f^x_x\hat{Y}-\tilde{S}\langle\hat{Y},f^x_y+f^y_x\rangle\hat{Y}+\tilde{S}f^y_x+\langle\hat{Y},f^y_y\rangle_2+(\tilde{S}^2-1)\hat{Y}\langle\hat{Y}\otimes\hat{Y},f^y_y\rangle=0.
\end{split}
\end{equation}
If $f$ is in the kernel of the principal symbol of $\delta^\mathcal{B}_\mathsf{F}$, we have $\mathrm{trace}(f)=0$ and
\begin{equation}\label{deltakernel11}
\begin{split}
\xi f^x_x+\langle\eta,f^x_y\rangle=0,\\
\xi f^y_x+\langle\eta,f^y_y\rangle_2=0.
\end{split}
\end{equation}
Here and for the rest of the paper, we omit the subscript when there is no ambiguity.
Assume the scattering metric is in the form of $\frac{\mathrm{d}x^2}{x^4}+\frac{\mathrm{d}y^2}{x^2}$ at the point of interest.

Keep in mind that we need to take $(\tilde{S},\hat{Y})$ to be on the equatorial sphere $\xi\tilde{S}+\eta\cdot\hat{Y}=0$.
First, we take $\tilde{S}=0$ and $\hat{Y}=\hat{\eta}^\perp$, the first equation of \eqref{kernel11eq} gives
\begin{equation}\label{identityfxy1}
\langle\hat{\eta}^\perp,f^x_y\rangle=0,
\end{equation}
and the second equation of  \eqref{kernel11eq} gives
\begin{equation}\label{etamix222}
\langle\hat{\eta}^\perp,f^y_y\rangle_2=\langle\hat{\eta}^\perp\otimes \hat{\eta}^\perp,f^y_y\rangle\hat{\eta}^\perp,
\end{equation}
and consequently
\begin{equation}\label{etamix11}
\langle\eta\otimes\hat{\eta}^\perp,f^y_y\rangle=0.
\end{equation}

If $\eta=0$, $\hat{\eta}^\perp$ can be any unit-length vector, and we can conclude that $f^x_y=0$, and from \eqref{deltakernel11}
\[
f^x_x=0, \quad f^y_x=0.
\]
By \eqref{etamix222}, we have
\[
f^y_y=c\mathrm{Id},
\]
for some constant $c$.
With the trace-free condition $f^x_x+\langle\mathrm{Id},f^y_y\rangle=0$, we know that $c=0$ and thus $f^y_y=0$, and this completes the proof of $f=0$.\\

Now assume that $\eta\neq 0$. \\
\textit{Case 1}. $\xi\neq 0$: Take $\hat{Y}=\epsilon\hat{\eta}+\sqrt{1-\epsilon^2}\hat{\eta}^\perp$. Substitute $f^x_x=-\frac{1}{\xi}\langle\eta,f^x_y\rangle$ and $f^y_x=-\frac{1}{\xi}\langle\eta,f^y_y\rangle$ into the first equation of \eqref{kernel11eq}, we have
\[
\begin{split}
\epsilon\langle\left(\frac{|\eta|^2}{\xi^2}+1\right)\hat{\eta},f^x_y\rangle+\epsilon^3\langle\frac{|\eta|}{\xi}\left(\frac{|\eta|^2}{\xi^2}+1\right)\hat{\eta}\otimes\hat{\eta},f^y_y\rangle+\epsilon(1-\epsilon^2)\langle\frac{|\eta|}{\xi}\hat{\eta}^\perp\otimes\hat{\eta}^\perp,f^y_y\rangle\\
+\epsilon^2\sqrt{1-\epsilon^2}\langle \frac{|\eta|}{\xi}\left(\frac{|\eta|^2}{\xi^2}+1\right)\hat{\eta}^\perp\otimes\hat{\eta},f^y_y\rangle+\epsilon^2\sqrt{1-\epsilon^2}\langle \frac{|\eta|}{\xi}\hat{\eta}\otimes\hat{\eta}^\perp,f^y_y\rangle=0,
\end{split}
\]
(note that the last term vanishes). Taking 1st, 2nd and 3rd derivatives in $\epsilon$ at $\epsilon=0$, we obtain
\begin{eqnarray}
\langle\left(\frac{|\eta|^2}{\xi^2}+1\right)\hat{\eta},f^x_y\rangle+\langle\frac{|\eta|}{\xi}\hat{\eta}^\perp\otimes \hat{\eta}^\perp,f^y_y\rangle=0,\label{1stderivative}\\
\langle\hat{\eta}^\perp\otimes\hat{\eta},f^y_y\rangle=0,\label{2ndderivative}\\
\langle\left(\frac{|\eta|^2}{\xi^2}+1\right)\hat{\eta}\otimes\hat{\eta},f^y_y\rangle-\langle\hat{\eta}^\perp\otimes \hat{\eta}^\perp,f^y_y\rangle=0.\label{3rderivative}
\end{eqnarray}
Inserting the first equation of \eqref{deltakernel11} into \eqref{1stderivative}, we have
\begin{equation}\label{first_substi}
-\frac{\xi}{|\eta|}\left(\frac{|\eta|^2}{\xi^2}+1\right)f^x_x+\frac{|\eta|}{\xi}\langle\hat{\eta}^\perp\otimes \hat{\eta}^\perp,f^y_y\rangle=0
\end{equation}
Since $f$ is trace-free, then
we can take $\upsilon_j$, $j=1,\cdots,n-2$ such that $(\hat{\eta}_\mathsf,\upsilon_1,\cdots,\upsilon_{n-2})$ is an orthonormal basis for $\mathbb{R}^{n-1}$
and
\begin{equation}\label{tracekernel11}
f^x_x+\langle \hat{\eta}\otimes\hat{\eta},f^y_y\rangle+\sum_{j=1}^{n-2}\langle \upsilon_j\otimes\upsilon_j,f^y_y\rangle=0.
\end{equation}
We can then summarize \eqref{3rderivative} \eqref{first_substi} \eqref{tracekernel11} into the following linear system
\[
\left(\begin{array}{ccc}
1 & 1 &  \mathbbm{1}^T\\
-\left(\frac{|\eta|^2}{\xi^2}+1\right) & 0 & \frac{|\eta|^2}{\xi^2}\\
0 & \left(\frac{|\eta|^2}{\xi^2}+1\right) \mathbbm{1} & -\mathrm{Id}
\end{array}
\right)\left(\begin{array}{c}
f^x_x\\
\langle\hat{\eta}\otimes \hat{\eta},f^y_y\rangle\\
\langle\upsilon_j\otimes \upsilon_j,f^y_y\rangle_{j=1}^{n-2}
\end{array}
\right)=0,
\]
where 
\[
\mathbbm{1}=(\underbrace{1,\cdots,1}_{n-2})^T.
\]
The above linear system is nonsingular, thus
\[
f^x_x=\langle\hat{\eta}\otimes \hat{\eta},f^y_y\rangle=\langle\hat{\eta}^\perp\otimes \hat{\eta}^\perp,f^y_y\rangle=0,\quad \text{for any }\hat{\eta}^\perp\perp\hat{\eta}.
\]
Recall that we already proved $\langle\hat{\eta}\otimes\hat{\eta}^\perp,f^y_y\rangle=\langle\hat{\eta}^\perp\otimes\hat{\eta},f^y_y\rangle=0$ (cf. \eqref{etamix11}\eqref{2ndderivative}), we end up with
\[
f=0.
\]
\textit{Case 2}. $\xi= 0$: By \eqref{deltakernel11}, we have
\[
\begin{split}
\langle \hat{\eta},f^x_y\rangle =0,\\
\langle \hat{\eta},f^y_y\rangle_2=0.
\end{split}
\]
In particular
\[
\begin{split}
\langle \hat{\eta}\otimes \hat{\eta},f^y_y\rangle=0,\\
\langle \hat{\eta}^\perp\otimes \hat{\eta},f^y_y\rangle=0,
\end{split}
\]
and (taking \eqref{identityfxy1} into account)
\[
f^x_y=0.
\]
Now take any $\tilde{S}\neq 0$ sufficiently small and $\hat{Y}=\hat{\eta}^\perp$ in the first equation of \eqref{kernel11eq}, we have
\[
f^x_x-\tilde{S}\langle\hat{\eta}^\perp,f^y_x\rangle -\langle\hat{\eta}^\perp\otimes\hat{\eta}^\perp,f^y_y\rangle=0.
\]
Varying the value of $\tilde{S}$ we have
\[
f^x_x -\langle\hat{\eta}^\perp\otimes\hat{\eta}^\perp,f^y_y\rangle=0
\]
and
\[
\langle\hat{\eta}^\perp,f^y_x\rangle =0.
\]
for any $\hat{\eta}^\perp\perp\hat{\eta}$.
Together with \eqref{tracekernel11}
\[
f^x_x+\sum_{j=1}^{n-2}\langle\upsilon_j\otimes \upsilon_j,f^y_y\rangle=0,
\]
we have
\[
f^x_x=\langle \hat{\eta}^\perp\otimes\hat{\eta}^\perp,f^y_y\rangle=0,
\]
for any $\hat{\eta}^\perp\perp\hat{\eta}$.
Recall \eqref{etamix11}, we have
\[
f^y_y=0.
\]
Finally, we take $\hat{Y}=\hat{\eta}^\perp$ in the second equation of \eqref{kernel11eq} to get
\[
f^y_x=0.
\]

The above argument shows that if $f$ is in the kernel of the principal symbol of $\delta_\mathsf{F}$ and in the kernel of the matrix \eqref{matrix_L11_ff} for any $(\tilde{S},\hat{Y})$ on the equatorial sphere $\xi\tilde{S}+\eta\cdot\hat{Y}$, we must have $f\equiv 0$.
Notice that in above we can always take $\tilde{S}$ sufficiently close to $0$, such that $\chi(\tilde{S})>0$, so the integral of
\[
\chi(\tilde{S})\left(
\begin{array}{cc}
\tilde{S} & 0\\
\hat{Y}_2 &0\\
0 & \tilde{S}\\
0 &\hat{Y}_2
\end{array}
\right)\left(
\begin{array}{cc}
1 & -\tilde{S}\langle \hat{Y},\,\cdot\,\rangle_1\\
-\tilde{S}\hat{Y}_1 & I+(\tilde{S}^2-1)\hat{Y}_1\langle\hat{Y},\,\cdot\,\rangle_1
\end{array}
\right)\left(
\begin{array}{cccc}
\tilde{S} &\langle\hat{Y},\,\cdot\,\rangle_2 & 0 &0\\
0&0 & \tilde{S} &\langle\hat{Y},\,\cdot\,\rangle_2
\end{array}
\right)
\]
over the equatorial sphere is positive definite on tensors in the kernel of the principal symbol of $\delta^{\mathcal{B}}_\mathsf{F}$.
\end{proof}

\begin{lemma}
The operator $N_\mathsf{F}\in \Psi_{sc}^{-1,0}(X;\mathcal{B}S^1_1{^{sc}T}X,\mathcal{B}S^1_1{^{sc}T}X)$ is elliptic at finite points of $^{sc}T^*X$ when restricted to the kernel of the principal symbol of $\delta^\mathcal{B}_\mathsf{F}$ for $\mathsf{F}>0$ large enough.
\end{lemma}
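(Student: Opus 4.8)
The plan is to follow the same semiclassical rescaling as in the $T_1$ case (the second lemma of Section~\ref{section3}), reducing the ellipticity at finite points to positivity of an explicit matrix integral over the sphere $\mathbb{S}^{n-2}$, now restricted to the kernel of the principal symbol of $\delta^{\mathcal{B}}_{\mathsf{F}}$. Concretely, I would first compute the $(X,Y)$-Fourier transform of the front-face kernel \eqref{matrix_L11_fiber}, taking $\chi$ Gaussian, $\chi(s)=e^{-s^2/(2\nu(\hat{Y}))}$ with $\nu=\mathsf{F}^{-1}\alpha$, exactly as before; the factor $S+2\alpha|Y|$ and the extra tensor slot contribute the block structure of \eqref{matrix_L11_fiber}, but each entry is a polynomial in $S$ (hence in $D_\sigma$) times the same Gaussian, so the computation is mechanical. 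After the rescaling $h=\mathsf{F}^{-1}$, $(\xi_{\mathsf{F}},\eta_{\mathsf{F}})=\mathsf{F}^{-1}(\xi,\eta)$, dropping lower-order-in-$h$ terms, and extending the $|Y|$-integral to $\mathbb{R}$ using the $(t,\hat{Y})\to(-t,-\hat{Y})$ symmetry, I expect the principal symbol at base infinity to reduce to an integral over $\mathbb{S}^{n-2}$ of the matrix obtained from \eqref{matrix_L11_ff} by the substitution $\tilde{S}\mapsto \mathrm{i}\nu(\xi+\mathrm{i}\mathsf{F})D_{\hat{Y}\cdot\eta}$ acting on the Gaussian, i.e.\ of the ``semiclassical version'' of \eqref{matrix_L11_ff} with $\tilde{S}$ replaced by the quantity $\dfrac{(\xi_{\mathsf{F}}\mp\mathrm{i})\,\hat{Y}\cdot\eta_{\mathsf{F}}}{\xi_{\mathsf{F}}^2+1}$ in the appropriate slot (compare \eqref{matrix_T1_fp}), against the Gaussian weight $e^{-(\hat{Y}\cdot\eta_{\mathsf{F}})^2/(2h\phi_{\mathsf{F}})}$.

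Next I would observe that this matrix is manifestly positive semidefinite (it is of the form $A^*MA$ with $M=\mathfrak{M}\geq 0$, as in \eqref{schwartkernelT1}), so ellipticity on the relevant subspace amounts to showing that if a trace-free tensor $f=(f^x_x,f^x_y,f^y_x,f^y_y)$ lies in the kernel of this matrix for \emph{all} $\hat{Y}\in\mathbb{S}^{n-2}$ and satisfies the symbol equations \eqref{deltakernel11}, then $f=0$. The kernel equations are the ``complexified'' analogues of \eqref{kernel11eq}: the same two equations with $\tilde{S}$ replaced by $(\xi_{\mathsf{F}}-\mathrm{i})\frac{\hat{Y}\cdot\eta_{\mathsf{F}}}{\xi_{\mathsf{F}}^2+1}$ in the covariant-slot factor and by $(\xi_{\mathsf{F}}+\mathrm{i})\frac{\hat{Y}\cdot\eta_{\mathsf{F}}}{\xi_{\mathsf{F}}^2+1}$ in the contravariant-slot factor. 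I would then run the fiber-infinity argument of the previous lemma essentially verbatim, but keeping track of the complex coefficients: take $\hat{Y}\perp\eta_{\mathsf{F}}$ first to kill $f^x_y$ (via \eqref{deltakernel11}) and to force $\langle\hat{\eta}_{\mathsf{F}}^\perp,f^y_y\rangle_2=\langle\hat{\eta}_{\mathsf{F}}^\perp\otimes\hat{\eta}_{\mathsf{F}}^\perp,f^y_y\rangle\hat{\eta}_{\mathsf{F}}^\perp$; dispose of $\eta_{\mathsf{F}}=0$ by the trace-free condition as before; then for $\eta_{\mathsf{F}}\neq 0$ perturb $\hat{Y}=\epsilon\hat{\eta}_{\mathsf{F}}+\sqrt{1-\epsilon^2}\hat{\eta}_{\mathsf{F}}^\perp$, substitute $f^x_x=-\langle\eta_{\mathsf{F}},f^x_y\rangle/\xi_{\mathsf{F}}$ and $f^y_x=-\langle\eta_{\mathsf{F}},f^y_y\rangle_2/\xi_{\mathsf{F}}$ from \eqref{deltakernel11}, expand in powers of $\epsilon$, and extract the linear, quadratic and cubic coefficients to obtain the same $(n)\times(n)$ linear system in $f^x_x$, $\langle\hat{\eta}_{\mathsf{F}}\otimes\hat{\eta}_{\mathsf{F}},f^y_y\rangle$, $\langle\upsilon_j\otimes\upsilon_j,f^y_y\rangle$ that appears in the previous proof; its nonsingularity forces those components to vanish, and together with the mixed components already shown to vanish this gives $f=0$. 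The degenerate case $\xi_{\mathsf{F}}=0$ is handled exactly as \textit{Case 2} above, varying $\tilde{S}$ (equivalently, $\hat{Y}\cdot\eta_{\mathsf{F}}$ along $\hat{\eta}_{\mathsf{F}}^\perp$-directions). Finally, since $\chi$ can be taken with $\chi(\tilde{S})>0$ near $0$, positive-definiteness on the subspace follows, and one concludes ellipticity of $N_{\mathsf{F}}$ at finite points restricted to $\ker\sigma(\delta^{\mathcal{B}}_{\mathsf{F}})$ for $\mathsf{F}$ large.

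The main obstacle I anticipate is \emph{not} the sphere/perturbation combinatorics—that is a direct transcription of the two earlier proofs—but rather two bookkeeping points that must be gotten right: first, correctly identifying the principal symbol of $\delta^{\mathcal{B}}_{\mathsf{F}}$ on the scattering bundle $\mathcal{B}S^1_1{}^{sc}TX$ (i.e.\ justifying that it is given by \eqref{deltakernel11} together with the trace-free constraint, with the conjugation by $e^{\mathsf{F}/x}$ contributing only lower-order terms at base infinity and hence not affecting the leading symbol used here), and second, verifying that the complex coefficients $(\xi_{\mathsf{F}}\pm\mathrm{i})/(\xi_{\mathsf{F}}^2+1)$ entering the two tensor slots do not conspire to make the relevant $(n)\times(n)$ determinant vanish for some real $(\xi_{\mathsf{F}},\eta_{\mathsf{F}})$—this is where the $\mathsf{F}>0$ (equivalently, the strictly positive imaginary part) is used, just as the $T_1$ argument used $\mathsf{F}>0$ through the factor $\xi_{\mathsf{F}}^2+1>0$. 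Since the determinant in question is the same real linear system as before (the $\pm\mathrm{i}$ cancels in the products $f^x_x=-\langle\eta_{\mathsf{F}},f^x_y\rangle/\xi_{\mathsf{F}}$ etc., and in $(\xi_{\mathsf{F}}-\mathrm{i})(\xi_{\mathsf{F}}+\mathrm{i})=\xi_{\mathsf{F}}^2+1$), I expect no genuine new difficulty, only care in the algebra.
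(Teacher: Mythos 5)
Your proposal tracks the paper's argument closely in its overall shape --- the Gaussian choice $\chi(s)=e^{-s^2/(2\nu)}$ with $\nu=\mathsf{F}^{-1}\alpha$, the semiclassical rescaling $h=\mathsf{F}^{-1}$, the reduction to positivity of a sphere integral of a matrix of the form $A^*\mathfrak{M}A$, the perturbation $\hat{Y}=\epsilon\hat{\eta}_\mathsf{F}+\sqrt{1-\epsilon^2}\hat{\eta}_\mathsf{F}^\perp$, and the resulting $n\times n$ linear system whose nonsingularity forces $f=0$. That is indeed the paper's route.

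However, one of your flagged ``bookkeeping'' claims is actually incorrect, and the correction both fixes the algebra and removes a case you thought was necessary. You assert that the conjugation by $e^{\mathsf{F}/x}$ contributes only lower-order terms at base infinity, so that the gauge equations are \eqref{deltakernel11}. This is the fiber-infinity statement, not the base-infinity one. Conjugating $x^2\partial_x$ by $e^{\pm\mathsf{F}/x}$ adds $\mp\mathsf{F}$, which is \emph{not} lower order at the scattering front face: the boundary principal symbol shifts $\xi\mapsto\xi\mp\mathrm{i}\mathsf{F}$, so after rescaling the gauge equations at finite points read $(\xi_\mathsf{F}-\mathrm{i})f^x_x+\langle\eta_\mathsf{F},f^x_y\rangle_2=0$ and $(\xi_\mathsf{F}-\mathrm{i})f^y_x+\langle\eta_\mathsf{F},f^y_y\rangle_2=0$ (this is \eqref{deltakernel11sc} in the paper, not \eqref{deltakernel11}). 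Accordingly, the substitution you wrote, $f^x_x=-\langle\eta_\mathsf{F},f^x_y\rangle/\xi_\mathsf{F}$, should be $f^x_x=-\langle\eta_\mathsf{F},f^x_y\rangle/(\xi_\mathsf{F}-\mathrm{i})$, and similarly for $f^y_x$. You hint at the right mechanism later when you note the products cancel to $\xi_\mathsf{F}^2+1$, but the substitution as written is inconsistent with that remark.

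This correction is not cosmetic: because $(\xi_\mathsf{F}-\mathrm{i})\neq 0$ for all real $\xi_\mathsf{F}$, there is no degenerate case to treat. The ``Case~2'' you propose (handling $\xi_\mathsf{F}=0$ separately, as at fiber infinity) is unnecessary at finite points --- the paper's proof has none --- and in fact needing it would signal that you are using the wrong symbol. The presence of the $-\mathrm{i}$ is exactly where $\mathsf{F}>0$ enters, it is what converts the denominator $\xi^2$ of the fiber-infinity linear system into the uniformly positive $\xi_\mathsf{F}^2+1$, and it is the main simplification relative to the previous lemma. With \eqref{deltakernel11sc} in place, the rest of your outline --- the $\epsilon$-expansion, extracting the first, second, and third order coefficients, assembling the linear system with $\mathbbm{1}=(1,\ldots,1)^T$, and invoking its nonsingularity --- matches the paper and closes the argument.
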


\begin{proof}
If a trace-free $(1,1)$-tensor $f$ is in the kernel of the principal symbol of $\delta^\mathcal{B}_\mathsf{F}$ as a semiclassical pseudodifferential operator, we have
\begin{equation}\label{deltakernel11sc}
\begin{split}
(\xi_\mathsf{F}-\mathrm{i}) f^x_x+\langle\eta_\mathsf{F},f^x_y\rangle_2=0,\\
(\xi_\mathsf{F}-\mathrm{i}) f^y_x+\langle\eta_\mathsf{F},f^y_y\rangle_2=0.
\end{split}
\end{equation}

Analogous to \eqref{principalsymbolYintegral} for transverse ray transform, the (semiclassical) principal symbol of $N_\mathsf{F}$ at the scattering front face is
\[
\int_{\mathbb{S}^{n-2}} (\xi_\mathsf{F}^2+1)^{-1/2}\mathcal{M}e^{-(\hat{Y}\cdot\eta_\mathsf{F})^2/(2h\phi_\mathsf{F}(\xi_\mathsf{F},\hat{Y}))}\mathrm{d}\hat{Y},
\]
where
\[
\begin{split}
\mathcal{M}=\left(
\begin{array}{cc}
-(\xi_\mathsf{F}+\mathrm{i})\left(\frac{\hat{Y}\cdot\eta_\mathsf{F}}{\xi_\mathsf{F}^2+1}\right)& 0\\
\hat{Y}_2&0\\
0&-(\xi_\mathsf{F}+\mathrm{i})\left(\frac{\hat{Y}\cdot\eta_\mathsf{F}}{\xi_\mathsf{F}^2+1}\right)\\
0&\hat{Y}_2
\end{array}
\right)
\left(\begin{array}{cc}
1 & (\xi_\mathsf{F}-\mathrm{i})\frac{\hat{Y}\cdot\eta_\mathsf{F}}{\xi_\mathsf{F}^2+1}\langle\hat{Y},\,\cdot\,\rangle_1\\
(\xi_\mathsf{F}+\mathrm{i})\frac{\hat{Y}\cdot\eta_\mathsf{F}}{\xi^2_\mathsf{F}+1}\hat{Y}_1 &\mathrm{Id}+(\frac{(\hat{Y}\cdot\eta_\mathsf{F})^2}{\xi_\mathsf{F}^2+1}-1) \hat{Y}_1\langle\hat{Y},\,\cdot\,\rangle_1
\end{array}
\right)\\
\left(
\begin{array}{cccc}
-(\xi_\mathsf{F}-\mathrm{i})\left(\frac{\hat{Y}\cdot\eta_\mathsf{F}}{\xi_\mathsf{F}^2+1}\right) &\langle\hat{Y},\,\cdot\,\rangle_2 & 0 &0\\
0&0&-(\xi_\mathsf{F}-\mathrm{i})\left(\frac{\hat{Y}\cdot\eta_\mathsf{F}}{\xi_\mathsf{F}^2+1}\right) &\langle\hat{Y},\,\cdot\,\rangle_2 
\end{array}
\right).
\end{split}
\]
See also the calculation in the proofs of \cite[Lemma 3.5]{stefanov2018inverting} and \cite[Lemma 2.4]{de2018mixed}.

If $f$ is in the kernel of above matrix, we have
\begin{equation}\label{identity11sc}
\begin{split}
-(\xi_\mathsf{F}-\mathrm{i})\left(\frac{\hat{Y}\cdot\eta_\mathsf{F}}{\xi_\mathsf{F}^2+1}\right)f^x_x+\langle\hat{Y},f^x_y\rangle-(\xi_\mathsf{F}-\mathrm{i})^2\left(\frac{\hat{Y}\cdot\eta_\mathsf{F}}{\xi_\mathsf{F}^2+1}\right)^2\langle\hat{Y},f^y_x\rangle&\\
+(\xi_\mathsf{F}-\mathrm{i})\left(\frac{\hat{Y}\cdot\eta_\mathsf{F}}{\xi_\mathsf{F}^2+1}\right)\langle\hat{Y}\otimes\hat{Y},f^y_y\rangle&=0,
\end{split}
\end{equation}
and by simply taking $\hat{Y}\cdot\eta_\mathsf{F}=0$,
\begin{equation}\label{eq_L11_1}
\langle\hat{Y},f^y_y\rangle_2-\langle\hat{Y}\otimes\hat{Y},f^y_y\rangle\hat{Y}=0,\text{ for }\hat{Y}\perp\eta_\mathsf{F}.
\end{equation}
Taking $\hat{Y}\cdot\eta_\mathsf{F}=0$ in \eqref{identity11sc} yields
\begin{equation}\label{eta0fxy}
\langle\hat{Y},f^x_y\rangle=0,\text{ for }\hat{Y}\perp\eta_\mathsf{F}.
\end{equation}

If $\eta_\mathsf{F}=0$,  $\hat{Y}$ can any unit-length vector, so $f^x_y=0$. And we could directly get $f^x_x=f^y_x=0$ from \eqref{deltakernel11sc}. Furthermore, \eqref{eq_L11_1} becomes
\[
\langle\hat{Y},f^y_y\rangle_2-\langle\hat{Y}\otimes\hat{Y},f^y_y\rangle\hat{Y}=0,\text{ for any }\hat{Y},
\]
which means that any vector is an eigenvector of $f^y_y$.
Therefore $f^y_y=c\mathrm{Id}_{n-1}$ with a constant $c$. Now the trace-free condition $f^x_x+\langle\mathrm{Id},f^y_y\rangle=0$ gives $c=0$ and consequently $f^y_y=0$. Then we already have $f=0$.\\

So in the following we assume that $\eta_\mathsf{F}\neq 0$.
Choose $\hat{\eta}_\mathsf{F}^\perp\perp\hat{\eta}_\mathsf{F}$. Taking inner product of the left hand side of \eqref{eq_L11_1} with $\hat{\eta}_\mathsf{F}$, we have
\begin{equation}\label{eta11eq}
\langle\hat{\eta}_\mathsf{F}\otimes\hat{\eta}_\mathsf{F}^\perp,f^y_y\rangle=0.
\end{equation}
Substituting \eqref{deltakernel11sc} into \eqref{identity11sc} yields
\[
-(\xi_\mathsf{F}-\mathrm{i})\left(\frac{\hat{Y}\cdot\eta_\mathsf{F}}{\xi_\mathsf{F}^2+1}\right)f^x_x+\langle\hat{Y},f^x_y\rangle+(\xi_\mathsf{F}-\mathrm{i})\left(\frac{\hat{Y}\cdot\eta_\mathsf{F}}{\xi_\mathsf{F}^2+1}\right)^2\langle\hat{Y}\otimes\eta_\mathsf{F},f^y_y\rangle+(\xi_\mathsf{F}-\mathrm{i})\left(\frac{\hat{Y}\cdot\eta_\mathsf{F}}{\xi_\mathsf{F}^2+1}\right)\langle\hat{Y}\otimes\hat{Y},f^y_y\rangle=0.
\]
Taking $\hat{Y}=\epsilon\hat{\eta}_\mathsf{F}+\sqrt{1-\epsilon^2}\hat{\eta}_\mathsf{F}^\perp$ in above identity, we have
\[
\begin{split}
-\epsilon(\xi_\mathsf{F}-\mathrm{i})\left(\frac{|\eta_\mathsf{F}|}{\xi_\mathsf{F}^2+1}+\frac{1}{|\eta_\mathsf{F}|}\right)f^x_x+\sqrt{1-\epsilon^2}\langle\hat{\eta}^\perp_\mathsf{F},f^x_y\rangle+\epsilon^3(\xi_\mathsf{F}-\mathrm{i})\langle\left(\frac{|\eta_\mathsf{F}|^3}{(\xi_\mathsf{F}^2+1)^2}+\frac{|\eta_\mathsf{F}|}{\xi_\mathsf{F}^2+1}\right)\hat{\eta}_\mathsf{F}\otimes \hat{\eta}_\mathsf{F},f^y_y\rangle\\
+\epsilon^2\sqrt{1-\epsilon^2}(\xi_\mathsf{F}-\mathrm{i})\langle\left(\frac{|\eta_\mathsf{F}|^3}{(\xi_\mathsf{F}^2+1)^2}+\frac{|\eta_\mathsf{F}|}{\xi_\mathsf{F}^2+1}\right)\hat{\eta}_\mathsf{F}^\perp\otimes\hat{\eta}_\mathsf{F},f^y_y\rangle+\epsilon^2\sqrt{1-\epsilon^2}(\xi_\mathsf{F}-\mathrm{i})\langle\frac{|\eta_\mathsf{F}|}{\xi_\mathsf{F}^2+1}\hat{\eta}_\mathsf{F}\otimes\hat{\eta}^\perp_\mathsf{F},f^y_y\rangle\\
+\epsilon(1-\epsilon^2)(\xi_\mathsf{F}-\mathrm{i})\langle\frac{|\eta_\mathsf{F}|}{\xi_\mathsf{F}^2+1}\hat{\eta}^\perp_\mathsf{F}\otimes\hat{\eta}^\perp_\mathsf{F},f^y_y\rangle=0.
\end{split}
\]

 We take 1st order derivative to obtain
\[
-\left(\frac{|\eta_\mathsf{F}|}{\xi_\mathsf{F}^2+1}+\frac{1}{|\eta_\mathsf{F}|}\right)f^x_x+\langle\frac{|\eta_\mathsf{F}|}{\xi_\mathsf{F}^2+1}\hat{\eta}^\perp_\mathsf{F}\otimes\hat{\eta}^\perp_\mathsf{F},f^y_y\rangle=0.
\]
Taking 2nd derivative gives
\begin{equation}\label{eta01eq}
\langle\hat{\eta}^\perp_\mathsf{F}\otimes \hat{\eta}_\mathsf{F},f^y_y\rangle=0.
\end{equation}
Taking 3rd order derivative of above equation in $\epsilon$, we have
\[
\langle\left(\frac{|\eta_\mathsf{F}|^3}{(\xi_\mathsf{F}^2+1)^2}+\frac{|\eta_\mathsf{F}|}{\xi_\mathsf{F}^2+1}\right)\hat{\eta}_\mathsf{F}\otimes \hat{\eta}_\mathsf{F},f^y_y\rangle-\langle\frac{|\eta_\mathsf{F}|}{\xi_\mathsf{F}^2+1}\hat{\eta}^\perp_\mathsf{F}\otimes\hat{\eta}^\perp_\mathsf{F},f^y_y\rangle=0.
\]
Take $\upsilon_j$, $j=1,\cdots,n-2$ such that $(\hat{\eta}_\mathsf,\upsilon_1,\cdots,\upsilon_{n-2})$ is an orthonormal basis for $\mathbb{R}^{n-1}$.
We then write
\[
\left(\begin{array}{ccc}
1 & 1 & \mathbbm{1}^T\\
-\frac{|\eta_\mathsf{F}|^2}{\xi_\mathsf{F}^2+1}-1 &0 & \frac{|\eta_\mathsf{F}|^2}{\xi_\mathsf{F}^2+1}\\
0 & \left(\frac{|\eta_\mathsf{F}|^2}{\xi_\mathsf{F}^2+1}+1\right)\mathbbm{1}&- \mathrm{Id}
\end{array}
\right)\left(\begin{array}{c}
f^x_x\\
\langle\hat{\eta}_\mathsf{F}\otimes \hat{\eta}_\mathsf{F},f^y_y\rangle\\
\langle\upsilon_j\otimes \upsilon_j,f^y_y\rangle_{j=1}^{n-2}
\end{array}
\right)=0,
\]
Since the above matrix is nonsingular, we can conclude that
\[
f^x_x=\langle \hat{\eta}_\mathsf{F}\otimes\hat{\eta}_\mathsf{F},f^y_y\rangle=\langle \hat{\eta}_\mathsf{F}^\perp\otimes\hat{\eta}_\mathsf{F}^\perp,f^y_y\rangle=0, \text{ for any }\eta^\perp_\mathsf{F}\perp\eta_\mathsf{F}.
\]
With \eqref{eta11eq} and \eqref{eta01eq}, we also have $f^y_y=0$. Using the second equation of \eqref{deltakernel11sc} yields
$f^y_x=0$. Using the first equation of \eqref{identity11sc} again, we see that 
\[
\langle \hat{\eta}_\mathsf{F},f^x_y\rangle=0
\]
together with \eqref{eta0fxy}, we get $f^x_y=0$. So finally we arrived at $f\equiv 0$ and this proves the ellipticity.
\end{proof}
\subsection{The gauge condition and the proof of the main results}
In the following we compute the principal symbols of $\mathrm{d}^\mathcal{B}_\mathsf{F}$ and  $\delta^\mathcal{B}_\mathsf{F}$. Recall that these operators are defined using the background metric $g$ and the adjoint $\delta_\mathsf{F}^\mathcal{B}$ is taken with respect to the scattering metric $g_{sc}$. We will use basis 
\[
x^2\partial_x,\,x\partial_y
\]
 for $(1,0)$-tensors, and
\[
(x^2\partial_x)\otimes\frac{\mathrm{d}x}{x^2},\,(x^2\partial_x)\otimes\frac{\mathrm{d}y}{x},\,(x\partial_y)\otimes\frac{\mathrm{d}x}{x^2},\,(x\partial_y)\otimes\frac{\mathrm{d}y}{x}
\]
as a basis for $(1,1)$-tensors.

Assume
\begin{equation}\label{form_f}
v=v^xx^2\partial_x+v^{y^i}x\partial_{y^i}.
\end{equation}
We calculate
\[
\begin{split}
(\nabla v)^x_x&=x^{-2}\partial_xv^x+\mathcal{O}(x^{-3}),\\
(\nabla v)^x_{y^i}&=x^{-2}\partial_{y^i}v^x+\mathcal{O}(x^{-2}),\\
(\nabla v)^{y^i}_{x}&=x^{-1}\partial_xv^{y^i}+\mathcal{O}(x^{-2}),\\
(\nabla v)^{y^i}_{y^j}&=x^{-1}\partial_{y^j}v^{y^i}+x^{-2}a(v_x)+\mathcal{O}(x^{-1}),
\end{split}
\]
where $a$ is related to the Christoffel symbols of the background metric $g$ under the local coordinates $(x,y)$.
Then one can write
\[
\begin{split}
\mathrm{d}v=&x^2\partial_xv^x(x^2\partial_x)\otimes \frac{\mathrm{d}x}{x^2}+x\partial_{y^i}v^x(x^2\partial_x)\otimes \frac{\mathrm{d}y^i}{x}+x^2\partial_{x}v^{y^i} (x\partial_{y^i})\otimes\frac{\mathrm{d}x}{x^2}\\
&\quad\quad\quad\quad+\left(x\partial_{y^j}v^{y^i}+a_{ij}(v_x)\right)(x\partial_{y^i})\otimes\frac{\mathrm{d}y^j}{x}+\mathrm{l.o.t.}.
\end{split}
\]
Therefore the principal symbol of $\frac{1}{\mathrm{i}}\mathrm{d}_\mathsf{F}$ is, in matrix form,
\[
\left(\begin{array}{cc}
\xi+\mathrm{i}\mathsf{F} &0\\
\eta_2 & 0\\
0&\xi+\mathrm{i}\mathsf{F} \\
a &\eta_2
\end{array}\right).
\]

Since we can perform the computation of the symbol at a single point, we assume that $g_{sc}$ is of the form $\frac{\mathrm{d}x^2}{x^4}+\frac{\mathrm{d}y^2}{x^2}$. We denote
\[
\mathrm{Id}=\mathrm{diag}\{\underbrace{1,1,\cdots,1}_{n-1}\}.
\]
Note that for $(1,1)$-tensor $f$, the operator $\mathcal{B}$ acts on $f$ as
\[
(\mathcal{B}f)^i_j=f^i_j-\frac{1}{n}\mathrm{trace}(f)\delta^i_j.
\]
The principal symbol of $\frac{1}{\mathrm{i}}\mathrm{d}^\mathcal{B}_\mathsf{F}=\frac{1}{\mathrm{i}}(\mathrm{d}_\mathsf{F}-\mathcal{B}\mathrm{d}_\mathsf{F})$ is then
\[
\left(\begin{array}{cc}
\xi+\mathrm{i}\mathsf{F} &0\\
\eta & 0\\
0&\xi+\mathrm{i}\mathsf{F} \\
a &\eta_2
\end{array}\right)-\frac{1}{n}\left(\begin{array}{c}
1\\
0\\
0\\
\mathrm{Id}
\end{array}
\right)
(1,0,0,\langle\mathrm{Id},\,\cdot\,\rangle)\left(\begin{array}{cc}
\xi+\mathrm{i}\mathsf{F} &0\\
\eta & 0\\
0&\xi+\mathrm{i}\mathsf{F} \\
a &\eta_2
\end{array}\right),
\]
and the principal symbol of $\frac{1}{\mathrm{i}}\delta^\mathcal{B}_\mathsf{F}$ is
\[
\left(\begin{array}{cccc}
\xi-\mathrm{i}\mathsf{F} &\langle \eta,\,\cdot\,\rangle & 0 &\langle a,\,\cdot\,\rangle\\
0 & 0 &\xi-\mathrm{i}\mathsf{F} &\langle \eta,\,\cdot\,\rangle_2
\end{array}\right)-
\frac{1}{n}\left(\begin{array}{cccc}
\xi-\mathrm{i}\mathsf{F} & \langle \eta,\,\cdot\,\rangle  & 0 & \langle a,\,\cdot\,\rangle\\
0 & 0 &\xi-\mathrm{i}\mathsf{F} &\langle \eta,\,\cdot\,\rangle_2
\end{array}\right)\left(\begin{array}{c}
1\\
0\\
0\\
\mathrm{Id}
\end{array}
\right)
(1,0,0,\langle\mathrm{Id},\,\cdot\,\rangle).
\]
Here 
\[
\langle \eta,w\rangle_2=\sum_j^{n-1}\eta_jw_{ij}.
\]

Introducing the operator
\[
\Delta^\mathcal{B}_\mathsf{F}=\delta^\mathcal{B}_\mathsf{F}d^\mathcal{B}_\mathsf{F},
\]
we have
\begin{lemma}
For $\mathsf{F}>0$ sufficiently large, $-\Delta^\mathcal{B}_\mathsf{F}$ is elliptic in $\mathrm{Diff}_{sc}^{2,0}(X;S^1{^{sc}TX},S^1{^{sc}TX})$ on $(1,0)$-tensors.
\end{lemma}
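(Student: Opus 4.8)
The plan is to exploit the factorization $-\Delta^{\mathcal B}_{\mathsf F}=(\mathrm d^{\mathcal B}_{\mathsf F})^{\star}\mathrm d^{\mathcal B}_{\mathsf F}$, where $(\cdot)^{\star}$ denotes the formal adjoint with respect to the scattering metric $g_{sc}$ and the associated $L^{2}_{sc}$ pairing; this is a genuine identity, not merely a symbolic one, because $-\delta^{\mathcal B}_{\mathsf F}$ was defined to be exactly that adjoint of $\mathrm d^{\mathcal B}_{\mathsf F}$. Hence $-\Delta^{\mathcal B}_{\mathsf F}$ is a formally self-adjoint, non-negative element of $\mathrm{Diff}_{sc}^{2,0}(X;S^{1}{^{sc}TX},S^{1}{^{sc}TX})$, and its joint symbol — the principal symbol at fiber infinity of ${^{sc}T^{*}X}$ over all of $X$, together with the scattering symbol at the base face $x=0$ over all finite covectors — is $\mathsf p^{\star}\mathsf p$, where $\mathsf p$ is the corresponding symbol of $\mathrm d^{\mathcal B}_{\mathsf F}$. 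Since $\mathsf p^{\star}\mathsf p$ is positive definite exactly when $\mathsf p$ is injective, it suffices to show that the symbol of $\mathrm d^{\mathcal B}_{\mathsf F}$ is injective (i) at fiber infinity, at every point of $X$, and (ii) at the base face $x=0$, for every $(\xi,\eta)\in\mathbb R^{n}$.

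For (i) the principal symbol of $\tfrac1{\mathrm i}\mathrm d^{\mathcal B}_{\mathsf F}$ at fiber infinity is $v\mapsto\mathcal B(v\otimes\zeta)=v\otimes\zeta-\tfrac1n\langle v,\zeta\rangle\,\mathrm{Id}$, the $\mathsf F$-conjugation and the zeroth order term $a$ being of lower order there. If this vanishes for some $\zeta\neq0$, then $v\otimes\zeta$ is a scalar multiple of $\mathrm{Id}$; being of rank at most one while $\mathrm{Id}$ has rank $n\geq3$, that multiple is $0$, so $v\otimes\zeta=0$ and $v=0$. This already gives ellipticity at fiber infinity over all of $X$, in particular at interior points $x>0$.

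For (ii) we use the matrix form of the symbol of $\tfrac1{\mathrm i}\mathrm d^{\mathcal B}_{\mathsf F}$ at $x=0$ computed above (with $g_{sc}=\mathrm dx^{2}/x^{4}+\mathrm dy^{2}/x^{2}$). Let $v=(v^{x},v^{y})$ be in the kernel. Since the tensor $\mathrm{Id}$ has vanishing $(x,y)$- and $(y,x)$-blocks, the vanishing of $\mathcal B$ applied to the unprojected image forces the two off-diagonal blocks $v^{x}\eta$ and $(\xi+\mathrm i\mathsf F)v^{y}$ to vanish; as $\xi\in\mathbb R$ and $\mathsf F>0$ give $\xi+\mathrm i\mathsf F\neq0$, we obtain $v^{y}=0$. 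Writing $c$ for the trace of the unprojected image, the two diagonal-block equations then become $(\xi+\mathrm i\mathsf F)v^{x}=\tfrac1n c$ and $v^{x}a=\tfrac1n c\,\mathrm{Id}$, so $v^{x}\bigl(a-(\xi+\mathrm i\mathsf F)\mathrm{Id}\bigr)=0$. Since $a$ is a real matrix (built from the Christoffel symbols of $g$) and $\xi$ is real with $\mathsf F>0$, the matrix $a-(\xi+\mathrm i\mathsf F)\mathrm{Id}$ has eigenvalues with strictly negative imaginary part and is invertible, whence $v^{x}=0$ and $v=0$. (Any $\mathsf F>0$ suffices; the hypothesis that $\mathsf F$ be large is kept only to match the other lemmas of this section.)

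Combining (i) and (ii), $\mathsf p^{\star}\mathsf p$ is positive definite on all the relevant boundary faces, which is precisely the asserted ellipticity of $-\Delta^{\mathcal B}_{\mathsf F}$ in $\mathrm{Diff}_{sc}^{2,0}$. The only step that requires work beyond bookkeeping is (ii): one must verify that neither the trace projection $\mathcal B$ nor the lower-order term $a$ appearing in $(\nabla v)^{y}_{y}$ can create a kernel at the base face, and this is exactly where the positivity of $\mathsf F$ enters.
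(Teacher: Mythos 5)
Your proof is correct, and it takes a genuinely different route from the paper. The paper computes the boundary symbol of $-\Delta^\mathcal{B}_\mathsf{F}$ explicitly, decomposes it as $\tfrac{n-2}{n}\sigma(\nabla^*_\mathsf{F}\nabla_\mathsf{F})+\tfrac1n M(\xi,\eta,\mathsf{F})+R(a)$, verifies that the first two pieces dominate $C(\mathsf{F}^2+|\xi|^2+|\eta|^2)\mathrm{Id}$, and controls the remainder only crudely by $|R(a)|\le\epsilon^2(|\xi|^2+|\eta|^2)+C/\epsilon^2$; this last estimate is why the paper needs $\mathsf{F}$ large. You instead exploit the operator factorization $-\Delta^\mathcal{B}_\mathsf{F}=(\mathrm{d}^\mathcal{B}_\mathsf{F})^{\star}\mathrm{d}^\mathcal{B}_\mathsf{F}$ (which does hold on the nose, since $-\delta^\mathcal{B}_\mathsf{F}$ is defined to be the adjoint of $\mathrm{d}^\mathcal{B}_\mathsf{F}$ with respect to $g_{sc}$ and $\mathcal{B}$ is $g_{sc}$-orthogonal), together with the multiplicativity of the scattering normal symbol and of the fiber-infinity principal symbol; this reduces the lemma to injectivity of the symbol of $\mathrm{d}^\mathcal{B}_\mathsf{F}$, which you verify directly from its block structure. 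Your argument is cleaner and actually yields the sharper statement that any $\mathsf{F}>0$ suffices, because you use the real structure of the Christoffel block $a$ rather than a size estimate. What your route does not give, and what the paper's does, is the explicit operator-level decomposition recorded in the subsequent lemma (equation \eqref{laplaciandecomposition}), which is then used verbatim in the proof of Lemma \ref{lemma7} to derive the quantitative invertibility estimate $\|u\|_{\dot H^{1,0}_{sc}}\le C\|\Delta^\mathcal{B}_\mathsf{F}u\|_{\bar H^{-1,0}_{sc}}$; so the paper's more pedestrian computation is earning its keep beyond the bare ellipticity statement. One small wrinkle in your write-up: the ``eigenvalues with strictly negative imaginary part'' remark is not quite right for a general (possibly non-symmetric) real matrix $a$, but you do not actually need invertibility of $a-(\xi+\mathrm{i}\mathsf{F})\mathrm{Id}$ as a linear map — since $v^x$ is a scalar, $v^x\bigl(a-(\xi+\mathrm{i}\mathsf{F})\mathrm{Id}\bigr)=0$ with $a$ real and $\mathsf{F}>0$ already forces $v^x=0$ because $a\ne(\xi+\mathrm{i}\mathsf{F})\mathrm{Id}$.
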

\begin{proof}
By calculation the principal symbol of $-\Delta^\mathcal{B}_\mathsf{F}$ is 
\[
\left(\begin{array}{cc}
\xi^2+\mathsf{F}^2+|\eta|^2 & 0\\
0&\xi^2+\mathsf{F}^2+|\eta|^2
\end{array}
\right)-\frac{1}{n}
\left(\begin{array}{cc}
\xi^2+\mathsf{F}^2 &(\xi-\mathrm{i}\mathsf{F})\langle\eta,\,\cdot\,\rangle\\
(\xi+\mathrm{i}\mathsf{F})\eta &\eta\langle\eta,\,\cdot\,\rangle
\end{array}
\right)+R(a),
\]
where $R(a)$ is the principal symbol of an operator in $\mathrm{Diff}^1_{sc}(X,S^1{^{sc}TX};S^1{^{sc}TX})+\mathsf{F}\mathrm{Diff}^0_{sc}(X,S^1{^{sc}TX};S^1{^{sc}TX})$.
Denote $\nabla_\mathsf{F}=e^{-\mathsf{F}/x}\nabla e^{-\mathsf{F}/x}$ with $\nabla$ gradient relative to $g_{sc}$ (not $g$) and $\nabla_\mathsf{F}^*$ the adjoint of $\nabla_\mathsf{F}$.
By the discussion in \cite{stefanov2018inverting}, $\nabla^*_\mathsf{F}\nabla_\mathsf{F}$ has the principal symbol
\[
\left(
\begin{array}{cc}
\xi^2+\mathsf{F}^2+|\eta|^2 & 0\\
0 &\xi^2+\mathsf{F}^2+|\eta|^2
\end{array}
\right).
\]
Note that
\[
\begin{split}
&M(\xi,\eta,\mathsf{F})\\
=&2\left(\begin{array}{cc}
\xi^2+\mathsf{F}^2 +|\eta|^2& 0\\
0&\xi^2+\mathsf{F}^2 +|\eta|^2
\end{array}
\right)-\left(\begin{array}{cc}
\xi^2+\mathsf{F}^2 &(\xi-\mathrm{i}\mathsf{F})\langle\eta,\,\cdot\,\rangle\\
(\xi+\mathrm{i}\mathsf{F})\eta &\eta\langle\eta,\,\cdot\,\rangle
\end{array}
\right)\\
=&2\left(\begin{array}{cc}
|\eta|^2 & 0\\
0&\xi^2+\mathsf{F}^2
\end{array}
\right)+\left(\begin{array}{cc}
\xi^2+\mathsf{F}^2 &-(\xi-\mathrm{i}\mathsf{F})\langle\eta,\,\cdot\,\rangle\\
-(\xi+\mathrm{i}\mathsf{F})\eta &2|\eta|^2-\eta\langle\eta,\,\cdot\,\rangle
\end{array}
\right)
\end{split}
\]
is positive semidefinite. To see this, one can verify
\[
\begin{split}
&(\overline{f_x},\overline{f_y})
\left(\begin{array}{cc}
\xi^2+\mathsf{F}^2 &-(\xi-\mathrm{i}\mathsf{F})\langle\eta,\,\cdot\,\rangle\\
-(\xi+\mathrm{i}\mathsf{F})\eta &2|\eta|^2-\eta\langle\eta,\,\cdot\,\rangle
\end{array}
\right)\left(\begin{array}{c}
f_x\\
f_y
\end{array}
\right)\\
=&(\xi^2+\mathsf{F}^2)|f_x|^2-(\xi-\mathrm{i}\mathsf{F})\overline{f_x}\eta_jf_{y_j}-(\xi+\mathrm{i}\mathsf{F})f_x\eta_j\overline{f_{y_j}}+2|\eta|^2|f_y|^2-(\eta_jf_{y_j})(\eta_i\overline{f_{y_i}})\\
\geq &(\xi^2+\mathsf{F}^2)|f_x|^2-|\xi-\mathrm{i}\mathsf{F}|^2|f_x|^2-2|\eta_jf_{y_j}|^2+2|\eta|^2|f_y|^2\\
\geq &0.
\end{split}
\]
Then
\[
\sigma(-\Delta^\mathcal{B}_\mathsf{F})=\frac{n-2}{n}\sigma(\nabla^*_\mathsf{F}\nabla_\mathsf{F})+\frac{1}{n}M(\xi,\eta,\mathsf{F})+R(a).
\]
It is easy to see that 
\[
\frac{n-2}{n}\sigma(\nabla^*_\mathsf{F}\nabla_\mathsf{F})+\frac{1}{n}M(\xi,\eta,\mathsf{F})\geq C(\mathsf{F}^2+|\xi|^2+|\eta|^2)\mathrm{Id},
\]
and
\[
|R(a)|\leq \epsilon^2(|\xi|^2+|\eta|^2)+\frac{C}{\epsilon^2}.
\]
Then we can see that if $\mathsf{F}$ is sufficiently large, $-\Delta^\mathcal{B}_\mathsf{F}$ is elliptic.\\
\end{proof}

Now we write
\[
\begin{split}
&M(\xi,\eta,\mathsf{F})\\
=&2\left(\begin{array}{cc}
|\eta|^2 & 0\\
0&\xi^2+\mathsf{F}^2
\end{array}
\right)+\left(\begin{array}{cc}
\xi^2+\mathsf{F}^2 &-(\xi-\mathrm{i}\mathsf{F})\langle\eta,\,\cdot\,\rangle\\
-(\xi+\mathrm{i}\mathsf{F})\eta &\eta\langle\eta,\,\cdot\,\rangle
\end{array}
\right)+2\left(\begin{array}{cc}
0 &0\\
0 &|\eta|^2-\eta\langle\eta,\,\cdot\,\rangle
\end{array}
\right).
\end{split}
\]
Now, denote $\nabla_y$ to be the gradient operator on $Y$ with respect to the Riemannian metric $\frac{h(x,y)\mathrm{d}y^2}{x^2}$ and $\nabla^*_y$ the adjoint of $\nabla_y$ with respect to $\frac{h(x,y)\mathrm{d}y^2}{x^2}$. 

We introduce the following operators. We remark here that all the adjoints are taken with respect to the scattering metric $g_{sc}=\frac{\mathrm{d}x^2}{x^4}+\frac{h(x,y)\mathrm{d}y^2}{x^2}$.
With $w=w_x\partial_x+w_y\partial_y\in S^1{^{sc}TX}$ written in vector form $w=\left(\begin{array}{c}w_x\\w_y\end{array}\right)$, we denote
\[
\mathfrak{D}_1=\left(\begin{array}{cc}
0&0\\
\nabla_y &0\\
0 & 0\\
0&0
\end{array}
\right):S^1{^{sc}TX}\rightarrow S^1_1{^{sc}TX},\quad\quad \mathfrak{D}_2=\left(\begin{array}{cc}
0&0\\
0 &0\\
0 & \partial_x+\mathsf{F}\\
0&0
\end{array}
\right):S^1{^{sc}TX}\rightarrow S^1_1{^{sc}TX}.
\]
Then
\[
\mathfrak{D}_1f=x\partial_{y^i}f_x(x^2\partial_x)\otimes\frac{\mathrm{d}y^i}{x}+\mathrm{l.o.t.},\quad \mathfrak{D}_2f=x^2\partial_xf_{y^i}(x\partial_{y^i})\otimes\frac{\mathrm{d}x}{x^2}+\mathrm{l.o.t.},
\]
where $f$ is of the form \eqref{form_f}.
One can verify that
\[
\sigma
\left(\mathfrak{D}_1^*\mathfrak{D}_1+\mathfrak{D}_2^*\mathfrak{D}_2\right)=\left(\begin{array}{cc}
|\eta|^2 & 0\\
0&\xi^2+\mathsf{F}^2
\end{array}
\right).
\]
Let
\[
\mathfrak{D}_3=\left(\begin{array}{c}
\partial_x+\mathsf{F}\\
-\nabla_y
\end{array}
\right):C^\infty(X)\rightarrow  S^1{^{sc}TX}.
\]
Then
\[
\mathfrak{D}^*_3f=-\partial_xf_x+\mathsf{F}f_x+\partial_{y^i}f_{y^i}+\mathrm{l.o.t.}.
\]
Then one can calculate
\[
\sigma\left(\mathfrak{D}_3\mathfrak{D}_3^*
\right)=\left(\begin{array}{cc}
\xi^2+\mathsf{F}^2 &-(\xi-\mathrm{i}\mathsf{F})\langle\eta,\,\cdot\,\rangle\\
-(\xi+\mathrm{i}\mathsf{F})\eta &\eta\langle\eta,\,\cdot\,\rangle
\end{array}
\right).
\]
Now let
\[
\mathfrak{D}_4=\left(\begin{array}{cc}
0&0\\
0&0\\
0&0\\
0&\nabla_y
\end{array}
\right):S^1{^{sc}TX}\rightarrow S^1_1{^{sc}TX},\quad\quad \mathfrak{D}_5=\left(\begin{array}{cc}
0 &\nabla_y^*
\end{array}
\right):S^1{^{sc}TX}\rightarrow C^\infty(X).
\]

%
\[
\sigma(\mathfrak{D}_4^*\mathfrak{D}_4-\mathfrak{D}_5^*\mathfrak{D}_5)=
\left(\begin{array}{cc}
0&0\\
0 &|\eta|^2-\eta\langle\eta,\,\cdot\,\rangle
\end{array}
\right)+R'(a),
\]
where $R'(a)$ is the principal symbol of an operator in $\mathrm{Diff}^1_{sc}(X,S^1{^{sc}TX};S^1{^{sc}TX})+\mathsf{F}\mathrm{Diff}^0_{sc}(X,S^1{^{sc}TX};S^1{^{sc}TX})$.
This proves
\begin{lemma}
\begin{equation}\label{laplaciandecomposition}
-\Delta^\mathcal{B}_\mathsf{F}=\frac{n-2}{n}(\nabla^*\nabla+\mathsf{F}^2)+\frac{1}{n}\mathfrak{D}_1^*\mathfrak{D}_1+\frac{1}{n}\mathfrak{D}_2^*\mathfrak{D}_2+\frac{1}{n}\mathfrak{D}_3\mathfrak{D}_3^*+\frac{2}{n}(\mathfrak{D}_4^*\mathfrak{D}_4-\mathfrak{D}_5^*\mathfrak{D}_5)+A+B.
\end{equation}
where $A\in\mathrm{Diff}^1_{sc}(X,S^1{^{sc}TX};S^1{^{sc}TX})$ and $B\in h^{-1}\mathrm{Diff}^0_{sc}(X,S^1{^{sc}TX};S^1{^{sc}TX})$. Here we have denoted $h=\frac{1}{\mathsf{F}}$.
\end{lemma}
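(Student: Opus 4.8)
The plan is to establish \eqref{laplaciandecomposition} by matching (semiclassical scattering) principal symbols and then recognizing the remainder as a lower-order operator of the stated type. In the proof of the preceding lemma I already obtained
\[
\sigma(-\Delta^\mathcal{B}_\mathsf{F})=\frac{n-2}{n}\,\sigma(\nabla^*_\mathsf{F}\nabla_\mathsf{F})+\frac{1}{n}M(\xi,\eta,\mathsf{F})+R(a),
\]
and in the discussion that follows rewrote $M(\xi,\eta,\mathsf{F})$ as the sum of three explicit matrices. The first step is to identify those three matrices with the principal symbols of the non-negative operators $\mathfrak{D}_1^*\mathfrak{D}_1+\mathfrak{D}_2^*\mathfrak{D}_2$, $\mathfrak{D}_3\mathfrak{D}_3^*$, and $\mathfrak{D}_4^*\mathfrak{D}_4-\mathfrak{D}_5^*\mathfrak{D}_5$, respectively. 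Each of these is a short direct computation carried out in the coordinates where $g_{sc}=\mathrm{d}x^2/x^4+\mathrm{d}y^2/x^2$ at the point of interest, from the block forms of the $\mathfrak{D}_j$ and the fact that conjugating a scattering $x$-derivative by $e^{\mathsf{F}/x}$ produces the shift $\partial_x\mapsto\partial_x+\mathsf{F}$ --- which is exactly why this combination enters $\mathfrak{D}_2$ and $\mathfrak{D}_3$; the term $R'(a)$ in the last identity collects the Christoffel-symbol corrections, which are of differential order $1$ and at most linear in $\mathsf{F}$.

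The second step is to replace $\nabla^*_\mathsf{F}\nabla_\mathsf{F}$ by $\nabla^*\nabla+\mathsf{F}^2$. Writing the conjugated gradient as $\nabla_\mathsf{F}=\nabla+T$ with $T=\mathsf{F}\,\mathrm{d}(1/x)\otimes(\,\cdot\,)$ a scattering operator of order zero, one has $\nabla^*_\mathsf{F}\nabla_\mathsf{F}=\nabla^*\nabla+T^*T+(\nabla^*T+T^*\nabla)$, where $T^*T=\mathsf{F}^2|\mathrm{d}(1/x)|^2_{sc}=\mathsf{F}^2$ because $\mathrm{d}(1/x)$ has unit scattering length, while the first-order parts of $\nabla^*T$ and $T^*\nabla$ cancel, so $\nabla^*T+T^*\nabla\in\mathsf{F}\,\mathrm{Diff}^0_{sc}=h^{-1}\mathrm{Diff}^0_{sc}$. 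Hence $\nabla^*_\mathsf{F}\nabla_\mathsf{F}=\nabla^*\nabla+\mathsf{F}^2$ modulo $h^{-1}\mathrm{Diff}^0_{sc}$. Combining the two steps, the two sides of \eqref{laplaciandecomposition} have the same semiclassical principal symbol; their difference is then a scattering differential operator of differential order at most $1$, and tracking the powers of $\mathsf{F}$ shows it splits as $A+B$ with $A\in\mathrm{Diff}^1_{sc}$ and $B\in h^{-1}\mathrm{Diff}^0_{sc}$, $h=1/\mathsf{F}$.

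The real work here is bookkeeping rather than ideas: one must confirm that each error generated --- the curvature corrections to $\nabla^*\nabla$ acting on a $(1,0)$-tensor, the commutators produced in expanding $\mathfrak{D}_j^*\mathfrak{D}_j$ and $\mathfrak{D}_3\mathfrak{D}_3^*$, the contributions $R(a)$ and $R'(a)$ reflecting that $\mathrm{d}$ is taken relative to $g$ while all adjoints are relative to $g_{sc}$, and the cross terms above --- lands in $\mathrm{Diff}^1_{sc}+h^{-1}\mathrm{Diff}^0_{sc}$ and not in the forbidden classes $h\,\mathrm{Diff}^2_{sc}$, $h^{-1}\mathrm{Diff}^1_{sc}$, or $h^{-2}\mathrm{Diff}^0_{sc}$. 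This reduces to two checks: the degree-two and the $\mathsf{F}^2$ parts of the two sides must match exactly, which is precisely what the symbol identities of the first step give; and the $\mathsf{F}$-linear first-order parts must match too. On the diagonal this holds because $\partial_x$ is skew-adjoint to leading order, so $(-\partial_x+\mathsf{F})(\partial_x+\mathsf{F})=-\partial_x^2+\mathsf{F}^2$ up to lower order, with no $\mathsf{F}\partial_x$; off the diagonal the $\mathsf{F}$-linear contributions to $-\Delta^\mathcal{B}_\mathsf{F}$ are supplied exactly by $\mathfrak{D}_3\mathfrak{D}_3^*$, matching the $\xi\pm\mathrm{i}\mathsf{F}$ structure of its symbol. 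Finally one checks that the $x^{-2}$ weights carried by the coefficients $a_{ij}$ are the ones making $A$ genuinely a scattering (rather than only a b-) differential operator.
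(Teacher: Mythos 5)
Your proposal is correct and follows essentially the same route as the paper: the paper's own ``proof'' of this lemma is exactly the preceding symbol computations (the $M(\xi,\eta,\mathsf{F})$ decomposition into three matrices, the identification of these with $\sigma(\mathfrak{D}_1^*\mathfrak{D}_1+\mathfrak{D}_2^*\mathfrak{D}_2)$, $\sigma(\mathfrak{D}_3\mathfrak{D}_3^*)$, $\sigma(\mathfrak{D}_4^*\mathfrak{D}_4-\mathfrak{D}_5^*\mathfrak{D}_5)$, and the absorption of the Christoffel corrections $R(a)$, $R'(a)$ into $A+B$), closed by the words ``This proves.'' The only piece you spell out more explicitly than the paper is the replacement of $\nabla^*_\mathsf{F}\nabla_\mathsf{F}$ (whose symbol the paper quotes from \cite{stefanov2018inverting}) by $\nabla^*\nabla+\mathsf{F}^2$ modulo $h^{-1}\mathrm{Diff}^0_{sc}$, via $\nabla_\mathsf{F}=\nabla+\mathsf{F}\,\mathrm{d}(1/x)\otimes(\cdot)$ and the cancellation of the $\mathsf{F}$-linear first-order cross terms; that is a correct and welcome clarification, not a different method.
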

For the subsequent analysis, we need to take various regions $\Omega_j$ with artificial boundary $\partial X$ and ``interior" boundary $\partial_{int}\Omega_j$. In particular, we will take $\Omega_0$ below such that $\partial_{int}\Omega_0\subset\partial M$.
Next, we prove the invertibility of $-\Delta^\mathcal{B}_\mathsf{F}$ on $\Omega_j$.

We will follows the lines in \cite{stefanov2018inverting}. Let $\dot{H}^{m,l}_{sc}(\Omega_j)$ be the subspace of $H^{m,l}_{sc}(X)$ consisting of distributions supported in $\overline{\Omega_j}$, and let $\bar{H}^{m,l}_{sc}(\Omega_j)$ be the space of restrictions of elements in $H^{m,l}_{sc}(X)$ to $\Omega_j$. The $\dot{H}^{m,l}_{sc}(\Omega_j)^*=\bar{H}^{-m,-l}_{sc}(\Omega_j)$.
\begin{lemma}\label{lemma7}
The operator $-\Delta^\mathcal{B}_\mathsf{F}$, considered as a map $\dot{H}^{1,0}_{sc}\rightarrow (\dot{H}^{1,0}_{sc})^*=\overline{H}^{-1,0}_{sc}$, is invertible for $\mathsf{F}>0$ sufficiently large.
\end{lemma}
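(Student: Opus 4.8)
The plan is to combine the ellipticity statement established in the previous lemma with an energy estimate coming from the decomposition \eqref{laplaciandecomposition}, together with the standard coercivity/duality argument on $\dot H^{1,0}_{sc}$. First I would set up the bilinear form associated with $-\Delta^\mathcal{B}_\mathsf{F}$: for $u\in\dot H^{1,0}_{sc}(\Omega_j)$ one has, pairing against a test tensor $w$,
\[
\langle -\Delta^\mathcal{B}_\mathsf{F}u,w\rangle=\langle \mathrm{d}^\mathcal{B}_\mathsf{F}u,\mathrm{d}^\mathcal{B}_\mathsf{F}w\rangle,
\]
since $\delta^\mathcal{B}_\mathsf{F}$ is (up to sign) the $g_{sc}$-adjoint of $\mathrm{d}^\mathcal{B}_\mathsf{F}$; this makes $-\Delta^\mathcal{B}_\mathsf{F}$ formally nonnegative. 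The content is to upgrade this to a genuine coercivity estimate $\langle -\Delta^\mathcal{B}_\mathsf{F}u,u\rangle\geq c\|u\|_{\dot H^{1,0}_{sc}}^2$ for $\mathsf{F}$ large. This I would read off from \eqref{laplaciandecomposition}: the leading piece $\frac{n-2}{n}(\nabla^*\nabla+\mathsf{F}^2)$ contributes, after integration by parts, $\frac{n-2}{n}(\|\nabla_\mathsf{F}u\|^2+\mathsf{F}^2\|u\|^2)$, which already controls the full scattering $H^{1,0}_{sc}$-norm; the terms $\mathfrak{D}_i^*\mathfrak{D}_i$ and $\mathfrak{D}_3\mathfrak{D}_3^*$ are manifestly nonnegative; the term $\frac{2}{n}(\mathfrak{D}_4^*\mathfrak{D}_4-\mathfrak{D}_5^*\mathfrak{D}_5)$ is shown nonnegative exactly as in the previous lemma (it is $|\eta|^2-\eta\langle\eta,\cdot\rangle\geq 0$ at the symbol level, plus lower order); and finally the remainders $A\in\mathrm{Diff}^1_{sc}$ and $B\in h^{-1}\mathrm{Diff}^0_{sc}=\mathsf{F}\,\mathrm{Diff}^0_{sc}$ are absorbed: $|\langle Au,u\rangle|\leq \epsilon\|u\|_{H^{1,0}_{sc}}^2+C_\epsilon\mathsf{F}\|u\|^2$ and $|\langle Bu,u\rangle|\leq C\mathsf{F}\|u\|^2$, and both are dominated by the $\mathsf{F}^2\|u\|^2$ term once $\mathsf{F}$ is large (this is the same absorption trick used in the proof that $-\Delta^\mathcal{B}_\mathsf{F}$ is elliptic). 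This yields
\[
\langle -\Delta^\mathcal{B}_\mathsf{F}u,u\rangle\geq c\,\|u\|_{\dot H^{1,0}_{sc}(\Omega_j)}^2,\qquad u\in\dot H^{1,0}_{sc}(\Omega_j),
\]
for all sufficiently large $\mathsf{F}$.

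Once coercivity is in hand, invertibility follows from Lax--Milgram: the form is bounded on $\dot H^{1,0}_{sc}\times\dot H^{1,0}_{sc}$ (since $-\Delta^\mathcal{B}_\mathsf{F}\in\mathrm{Diff}^{2,0}_{sc}$) and coercive, hence for every $\phi\in(\dot H^{1,0}_{sc})^*=\overline H^{-1,0}_{sc}$ there is a unique $u\in\dot H^{1,0}_{sc}$ with $-\Delta^\mathcal{B}_\mathsf{F}u=\phi$, and $\|u\|_{\dot H^{1,0}_{sc}}\leq c^{-1}\|\phi\|_{\overline H^{-1,0}_{sc}}$. One should note that there is no kernel coming from the gauge here: $-\Delta^\mathcal{B}_\mathsf{F}$ acts on $(1,0)$-tensors, and the support condition in $\overline{\Omega_j}$ together with $\langle-\Delta^\mathcal{B}_\mathsf{F}u,u\rangle=0\Rightarrow u=0$ from coercivity kills any candidate null space — this is precisely why we conjugate by $e^{\mathsf{F}/x}$ and work in the scattering setting rather than with the unweighted Laplacian, which would have the large null space of potential fields.

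The main obstacle is the absorption of the remainder terms $A$ and $B$ uniformly down to $x=0$ and uniformly over the family of regions $\Omega_j$. The coefficient $a$ (the Christoffel-symbol contribution appearing in the symbol of $\mathrm{d}^\mathcal{B}_\mathsf{F}$) is a scattering-smooth but not necessarily small quantity, so the estimate $|R(a)|\leq\epsilon^2(|\xi|^2+|\eta|^2)+C\epsilon^{-2}$ from the earlier ellipticity lemma must be promoted to an operator inequality on $\dot H^{1,0}_{sc}$, which requires a Gårding-type argument for scattering pseudodifferential operators (e.g. using a microlocal partition and the sharp Gårding inequality in the scattering calculus) rather than just a pointwise symbol bound; the cross terms between $\mathsf{F}$-weighted and unweighted derivatives generated by the conjugation have to be tracked so that the threshold value of $\mathsf{F}$ is independent of $\Omega_j$ (it depends only on the $C^\infty$-seminorms of $g$ near $p$, which are controlled once $c$ is fixed). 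I would handle this exactly as in \cite[Section 2]{UV} and \cite{stefanov2018inverting}: fix $c$ small, obtain the symbol-level positivity from the decomposition, apply sharp Gårding to get positivity modulo $\mathsf{F}\,\mathrm{Diff}^0_{sc}$, and then choose $\mathsf{F}$ large relative to the resulting constants.
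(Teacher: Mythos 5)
Your proposal follows essentially the same route as the paper: pair $-\Delta^\mathcal{B}_\mathsf{F}u$ against $u$, expand via the decomposition \eqref{laplaciandecomposition}, keep the manifestly nonnegative $\mathfrak{D}$-terms and the $\frac{n-2}{n}(\nabla^*\nabla+\mathsf{F}^2)$ term, absorb the lower-order $A$ and $\mathsf{F}\cdot\mathrm{Diff}^0_{sc}$ remainders into $\mathsf{F}^2\|u\|^2$ for $\mathsf{F}$ large, and conclude via the bilinear form on $\dot H^{1,0}_{sc}$. The only substantive difference is in your last paragraph: the paper does \emph{not} need a sharp G\aa rding argument, because \eqref{laplaciandecomposition} is an exact operator identity (not a symbol-level identity), so the remainders are genuine scattering differential operators and the bounds $|\langle Au,u\rangle|\le C\|u\|_{\dot H^{1,0}_{sc}}\|u\|_{L^2_{sc}}$ and $|\langle Bu,u\rangle|\le C\mathsf{F}\|u\|^2_{L^2_{sc}}$ follow directly from the mapping properties of $\mathrm{Diff}^1_{sc}$ and $\mathrm{Diff}^0_{sc}$; similarly the $\mathfrak{D}_4^*\mathfrak{D}_4-\mathfrak{D}_5^*\mathfrak{D}_5$ contribution is handled by the elementary operator inequality $\|\nabla_y^*u_y\|^2_{L^2_{sc}}\le\|\nabla_yu_y\|^2_{L^2_{sc}}+\|u\|^2_{L^2_{sc}}$ rather than by a microlocal positivity argument. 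Your use of Lax--Milgram at the end is an equivalent packaging of the paper's direct a priori estimate $\|u\|_{\dot H^{1,0}_{sc}}\le C\|\Delta^\mathcal{B}_\mathsf{F}u\|_{\bar H^{-1,0}_{sc}}$. (Also a small slip: the estimate for $\langle Au,u\rangle$ should not carry an $\mathsf{F}$ factor; it is $\epsilon\|u\|^2_{\dot H^{1,0}_{sc}}+C_\epsilon\|u\|^2_{L^2_{sc}}$, which is what gets dominated by $\mathsf{F}^2\|u\|^2$.)
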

\begin{proof}
Assume that $u=u_x\partial_x+u_y\partial_y\in S^1{^{sc}TX}$.
Using the identity \eqref{laplaciandecomposition},
we have
\[
\begin{split}
\|\mathrm{d}_\mathsf{F}^\mathcal{B}u\|_{L^2_{sc}}^2=\frac{n-2}{n}(\|\nabla u\|_{L^2_{sc}}^2+\mathsf{F}^2\|u\|_{L^2_{sc}}^2)&+\frac{1}{n}\|\mathfrak{D}_1u\|_{L^2_{sc}}^2+\frac{1}{n}\|\mathfrak{D}_2u\|_{L^2_{sc}}^2+\frac{1}{n}\|\mathfrak{D}_3^*u\|_{L^2_{sc}}^2\\
&+\frac{2}{n}(\|\nabla_yu_y\|_{L^2_{sc}}^2-\|\nabla_y^*u_y\|_{L^2_{sc}}^2)+\langle Au, u\rangle_{L^2_{sc}}+\langle R u, u\rangle_{L^2_{sc}}.
\end{split}
\]
We use the fact that
\[
\|\nabla_y^*u_y\|_{L^2_{sc}}^2\leq \|\nabla_yu_y\|_{L^2_{sc}}^2+\|u\|^2_{L^2_{sc}}.
\]
Since $A\in\mathrm{Diff}^1_{sc}(X)$ and $R\in h^{-1}\mathrm{Diff}^0_{sc}(X)$, we have
\[
\left|\langle Au, u\rangle\right|\leq C\|u\|_{\dot{H}^{1,0}_{sc}}\|u\|_{L_{sc}^2},\quad\left|\langle \tilde{R}u, u\rangle\right|\leq Ch^{-1}\|u\|_{L_{sc}^2}^2.
\]
Then we obtain
\[
\|\nabla u\|_{L^2_{sc}}^2+\mathsf{F}^2\|u\|_{L^2_{sc}}^2-\varepsilon\|u\|^2_{\dot{H}^{1,0}_{sc}}-\frac{C}{\varepsilon}\|u\|_{L^2_{sc}}^2-C\mathsf{F}\|u\|_{L^2_{sc}}^2\leq C\|\mathrm{d}_\mathsf{F}^\mathcal{B}u\|_{L^2_{sc}}^2,
\]
for some $\varepsilon>0$ small enough (one can just take $\varepsilon=\frac{1}{2}$). Then we can take $\mathsf{F}$ large enough to have
\[
\|\nabla u\|^2_{L^2_{sc}}+\|u\|^2_{L^2_{sc}}\leq C\|\mathrm{d}_\mathsf{F}^\mathcal{B}u\|^2_{L^2_{sc}}.
\]
Now, using the variational form
\[
\langle-\Delta_\mathsf{F}^\mathcal{B} u, v\rangle=\langle\mathrm{d}_\mathsf{F}^\mathcal{B}u,\mathrm{d}_\mathsf{F}^\mathcal{B}v \rangle,
\]
we have
\[
\|\mathrm{d}_\mathsf{F}^\mathcal{B}u\|^2_{L^2_{sc}}\leq \|\Delta_\mathsf{F}^\mathcal{B} u\|_{\bar{H}^{-1,0}_{sc}}\|u\|_{\dot{H}^{1,0}_{sc}}\leq C(\epsilon)\epsilon^{-1}\|\Delta_\mathsf{F}^\mathcal{B} u\|_{\bar{H}^{-1,0}_{sc}}^2+\epsilon \|u\|_{\dot{H}^{1,0}_{sc}}^2.
\]
for arbitrary $\epsilon>0$ and $C(\epsilon)>0$ depending on $\epsilon$. Now we have
\[
\|u\|_{\dot{H}^{1,0}_{sc}}=\|\nabla u\|^2_{L^2_{sc}}+\|u\|^2_{L^2_{sc}}\leq C(\epsilon)\epsilon^{-1}\|\Delta_\mathsf{F}^\mathcal{B} u\|_{\bar{H}^{-1,0}_{sc}}^2+\epsilon \|u\|_{\dot{H}^{1,0}_{sc}}^2.
\]
Taking $\epsilon$ small enough, we obtain
\[
\|u\|_{\dot{H}^{1,0}_{sc}}\leq C'\|\Delta_\mathsf{F}^\mathcal{B} u\|_{\bar{H}^{-1,0}_{sc}}
\]
with some constant $C'>0$.
This proves the invertibility.
\end{proof}

Define
\[
\begin{split}
\mathcal{S}_{\mathsf{F},\Omega_j}\phi&=\phi-\mathrm{d}_\mathsf{F}^\mathcal{B}(\Delta_{\mathsf{F},\Omega_j}^\mathcal{B})^{-1}\delta^\mathcal{B}_\mathsf{F}\phi,\\
\mathcal{P}_{\mathsf{F},\Omega_j}\phi&=\mathrm{d}_\mathsf{F}^\mathcal{B}(\Delta_{\mathsf{F},\Omega_j}^\mathcal{B})^{-1}\delta^\mathcal{B}_\mathsf{F}\phi,
\end{split}
\]
which are the solenoidal and potential projections of $\phi$ on $\Omega_j$. 

Now let $\Omega_2$ be a larger neighborhood of $\Omega$, and take $M\in\Psi_{sc}^{-3,0}(X;S^1{^{sc}TX},S^1{^{sc}TX})$ such that
\[
A_\mathsf{F}=N_\mathsf{F}+\mathrm{d}_\mathsf{F}^\mathcal{B}M\delta_\mathsf{F}^\mathcal{B}
\]
is elliptic in $\Psi^{-1,0}_{sc}(X;\mathcal{B}S^1_1{^{sc}TX},\mathcal{B}S^1_1{^{sc}TX})$ in $\Omega_2$.
Denote $\tilde{\Omega}_j=\overline{\Omega_j}\setminus\partial_{int}\Omega_j$. Take $\Omega\subset\Omega_1\subset\Omega_2$.
Take $G$ to be a parametrix of $A_\mathsf{F}$, where
\[
GA_\mathsf{F}=I+E,
\]
with $\mathrm{WF}_{sc}'(E)$ is disjoint from $\Omega_1$, and $E=-\mathrm{Id}$ near $\partial_{int}\Omega_2$. Now
\[
G(N_\mathsf{F}+\mathrm{d}_\mathsf{F}^\mathcal{B}M\delta_\mathsf{F}^\mathcal{B})=I+E
\]
Applying $\mathcal{S}_{\mathsf{F},\Omega_2}$ from both sides, we obtain
\[
\mathcal{S}_{\mathsf{F},\Omega_2}G(N_\mathsf{F}+\mathrm{d}_\mathsf{F}^\mathcal{B}M\delta_\mathsf{F}^\mathcal{B})\mathcal{S}_{\mathsf{F},\Omega_2}=\mathcal{S}_{\mathsf{F},\Omega_2}+\mathcal{S}_{\mathsf{F},\Omega_2}E\mathcal{S}_{\mathsf{F},\Omega_2}.
\]
Notice that $N_\mathsf{F}\mathcal{S}_{\mathsf{F},\Omega_2}=N_\mathsf{F}$ and $\delta_\mathsf{F}^\mathcal{B}\mathcal{S}_{\mathsf{F},\Omega_2}=0$, we end up with
\[
\mathcal{S}_{\mathsf{F},\Omega_2}GN_\mathsf{F}=\mathcal{S}_{\mathsf{F},\Omega_2}+\mathcal{S}_{\mathsf{F},\Omega_2}E\mathcal{S}_{\mathsf{F},\Omega_2}.
\]

Let $e_{12}$ be the (zero) extension map from $\Omega_1$ to $\Omega_2$ and $r_{21}$ be the restriction map from $\Omega_2$ to $\Omega_1$. Then
\[
r_{21}\mathcal{S}_{\mathsf{F},\Omega_2}GN_\mathsf{F}=r_{21}\mathcal{S}_{\mathsf{F},\Omega_2}GN_\mathsf{F}e_{12}=r_{21}\mathcal{S}_{\mathsf{F},\Omega_2}e_{12}+K_1,
\]
with 
\[
K_1=r_{21}\mathcal{S}_{\mathsf{F},\Omega_2}E\mathcal{S}_{\mathsf{F},\Omega_2}e_{12}.
\]

Now
\begin{equation}\label{SF1}
\begin{split}
S_{\mathsf{F},\Omega_1}-r_{21}\mathcal{S}_{\mathsf{F},\Omega_2}e_{12}=&-\mathrm{d}_\mathsf{F}^\mathcal{B}(\Delta_{\mathsf{F},\Omega_1}^\mathcal{B})^{-1}\delta^\mathcal{B}_\mathsf{F}+r_{21}\mathrm{d}_\mathsf{F}^\mathcal{B}(\Delta_{\mathsf{F},\Omega_2}^\mathcal{B})^{-1}\delta^\mathcal{B}_\mathsf{F}e_{12}\\
=&-\mathrm{d}_\mathsf{F}^\mathcal{B}(\Delta_{\mathsf{F},\Omega_1}^\mathcal{B})^{-1}\delta^\mathcal{B}_\mathsf{F}+\mathrm{d}_\mathsf{F}^\mathcal{B}r_{21}(\Delta_{\mathsf{F},\Omega_2}^\mathcal{B})^{-1}\delta^\mathcal{B}_\mathsf{F}e_{12}\\
=&-\mathrm{d}_\mathsf{F}^\mathcal{B}((\Delta_{\mathsf{F},\Omega_1}^\mathcal{B})^{-1}\delta^\mathcal{B}_\mathsf{F}-r_{21}(\Delta_{\mathsf{F},\Omega_2}^\mathcal{B})^{-1}\delta^\mathcal{B}_\mathsf{F}e_{12}).
\end{split}
\end{equation}
So
\[
r_{21}\mathcal{S}_{\mathsf{F},\Omega_2}GN_\mathsf{F}=S_{\mathsf{F},\Omega_1}+\mathrm{d}_\mathsf{F}^\mathcal{B}\left((\Delta_{\mathsf{F},\Omega_1}^\mathcal{B})^{-1}\delta^\mathcal{B}_\mathsf{F}-r_{21}(\Delta_{\mathsf{F},\Omega_2}^\mathcal{B})^{-1}\delta^\mathcal{B}_\mathsf{F}e_{12}\right)+K_1.
\]

Denote $\gamma_{\partial_{int}\Omega_1}$ to be the restriction operator to $\partial_{int}\Omega_1$. Similar to \cite[Lemma 4.9]{stefanov2018inverting}, we have
\begin{lemma}
Let $\psi\in H^{1/2,k}(\partial_{int}\Omega_j)$. For sufficiently large $\mathsf{F}>0$, there exists a unique solution $u\in\bar{H}^{1,k}_{sc}(\Omega_j)$ such that $\Delta_\mathsf{F}^\mathcal{B}u=0$, $\gamma_{\partial_{int}\Omega_j}u=\psi$. This defines the Poisson operator $B_{\Omega_j}:H^{1/2,k}_{sc}(\partial_{int}\Omega)\rightarrow\bar{H}^{1,k}_{sc}(\Omega_j)$ such that
\[
\Delta_\mathsf{F}^\mathcal{B}B_{\Omega_j}=0,\quad \gamma_{\partial_{int}\Omega_1}B_{\Omega_j}=\mathrm{Id},
\]
and for $s>1/2$ and $\phi\in C^\infty(\overline{\Omega_j})$ supported away from $\partial_{int}\Omega_j$, $\phi B_{\Omega_j}:H_{sc}^{s-1/2,r}(\partial_{int}\Omega_j)\rightarrow H^{s,r}_{sc}(\Omega_j)$.
\end{lemma}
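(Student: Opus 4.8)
The plan is to treat this as a Dirichlet problem for the elliptic scattering operator $-\Delta^\mathcal{B}_\mathsf{F}$: reduce it to the invertibility statement of Lemma \ref{lemma7} by subtracting off an extension of the boundary data, and then upgrade regularity away from $\partial_{int}\Omega_j$ by elliptic (scattering) regularity, following the lines of \cite[Lemma 4.8]{stefanov2018inverting}.

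First I would fix a continuous extension operator $\mathcal{E}_j\colon H^{1/2,k}(\partial_{int}\Omega_j)\to\bar{H}^{1,k}_{sc}(\Omega_j)$ right-inverting the trace $\gamma_{\partial_{int}\Omega_j}$, chosen so that the range of $\mathcal{E}_j$ consists of sections supported in an arbitrarily thin collar $U$ of $\partial_{int}\Omega_j$ inside $\Omega_j$; since $\partial_{int}\Omega_j$ is a classical hypersurface lying in $\{x>0\}$, this is the usual trace/extension theorem. Writing a candidate solution as $u=\mathcal{E}_j\psi+v$, the problem becomes $\Delta^\mathcal{B}_\mathsf{F}v=-\Delta^\mathcal{B}_\mathsf{F}\mathcal{E}_j\psi$ with $\gamma_{\partial_{int}\Omega_j}v=0$, that is $v\in\dot{H}^{1,k}_{sc}(\Omega_j)$. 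Because $-\Delta^\mathcal{B}_\mathsf{F}\in\mathrm{Diff}^{2,0}_{sc}$, the right-hand side lies in $\bar{H}^{-1,k}_{sc}(\Omega_j)=(\dot{H}^{1,k}_{sc}(\Omega_j))^*$. For $k=0$ Lemma \ref{lemma7} produces a unique such $v$; for general $k$ I would note that conjugating $-\Delta^\mathcal{B}_\mathsf{F}$ by $x^{\mp k}$ neither changes its order $(2,0)$ nor destroys the fiber- and base-infinity ellipticity of $-\Delta^\mathcal{B}_\mathsf{F}$ in $\mathrm{Diff}^{2,0}_{sc}$ established above, so the proof of Lemma \ref{lemma7} applies verbatim with the weight $k$ provided $\mathsf{F}$ is taken large enough. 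This yields $u=B_{\Omega_j}\psi\in\bar{H}^{1,k}_{sc}(\Omega_j)$ with $\Delta^\mathcal{B}_\mathsf{F}u=0$ and $\gamma_{\partial_{int}\Omega_j}u=\psi$, and the estimate $\|B_{\Omega_j}\psi\|_{\bar{H}^{1,k}_{sc}}\le C\|\psi\|_{H^{1/2,k}}$ follows from continuity of $\mathcal{E}_j$ and Lemma \ref{lemma7}. Uniqueness is immediate: if $u\in\bar{H}^{1,k}_{sc}(\Omega_j)$ solves $\Delta^\mathcal{B}_\mathsf{F}u=0$ with $\gamma_{\partial_{int}\Omega_j}u=0$, then $u\in\dot{H}^{1,k}_{sc}(\Omega_j)$, whence $u=0$ by the (weighted) injectivity of $-\Delta^\mathcal{B}_\mathsf{F}$ on $\dot{H}^{1,k}_{sc}$; in particular the solution, hence $B_{\Omega_j}$, does not depend on the choice of $\mathcal{E}_j$, and $\Delta^\mathcal{B}_\mathsf{F}B_{\Omega_j}=0$, $\gamma_{\partial_{int}\Omega_j}B_{\Omega_j}=\mathrm{Id}$.

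For the mapping property with $\phi$ supported away from $\partial_{int}\Omega_j$, I would re-run the construction using an $\mathcal{E}_j$ whose collar $U$ is thin enough that $\overline{U}\cap\mathrm{supp}\,\phi=\emptyset$, and pick a nested chain $\phi=\phi_0,\phi_1,\dots,\phi_N\in C^\infty(\overline{\Omega_j})$, all supported away from $\partial_{int}\Omega_j$ and from $U$, with $\phi_{m+1}\equiv1$ near $\mathrm{supp}\,\phi_m$. On the supports of the $\phi_m$ one has $\Delta^\mathcal{B}_\mathsf{F}v=-\Delta^\mathcal{B}_\mathsf{F}\mathcal{E}_j\psi=0$, so $\Delta^\mathcal{B}_\mathsf{F}(\phi_m v)=[\Delta^\mathcal{B}_\mathsf{F},\phi_m]\phi_{m+1}v$ with $[\Delta^\mathcal{B}_\mathsf{F},\phi_m]\in\mathrm{Diff}^{1,0}_{sc}$. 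Since $-\Delta^\mathcal{B}_\mathsf{F}$ is fully elliptic in $\mathrm{Diff}^{2,0}_{sc}$ it has a parametrix $Q\in\Psi^{-2,0}_{sc}$ with $Q(-\Delta^\mathcal{B}_\mathsf{F})=\mathrm{Id}+R$, $R\in\Psi^{-\infty,-\infty}_{sc}$, so $\phi_m v=-Q[\Delta^\mathcal{B}_\mathsf{F},\phi_m]\phi_{m+1}v+R'(\phi_m v)$ for a residual $R'\in\Psi^{-\infty,-\infty}_{sc}$, which improves the scattering regularity by one order at each step. Iterating $N\gtrsim s$ times, starting from $\phi_N v\in H^{1,r}_{sc}$ (the $k=r$ case above), gives $\phi v=\phi u\in H^{s,r}_{sc}(\Omega_j)$ together with the bound $\|\phi B_{\Omega_j}\psi\|_{H^{s,r}_{sc}}\le C\|\psi\|_{H^{s-1/2,r}_{sc}(\partial_{int}\Omega_j)}$, after tracking norms through $\mathcal{E}_j$ and Lemma \ref{lemma7}.

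The main obstacle is keeping the two genuinely different parts of $\partial\Omega_j$ separate: the scattering face $\partial X=\{x=0\}$, where the ellipticity of $-\Delta^\mathcal{B}_\mathsf{F}$ in $\mathrm{Diff}^{2,0}_{sc}$ automatically supplies both interior regularity and the weighted control in $r$ (resp.\ $k$), versus the artificial interior boundary $\partial_{int}\Omega_j$, which is a classical Dirichlet boundary in $\{x>0\}$. The device that makes this work is precisely that the statement only claims regularity \emph{away} from $\partial_{int}\Omega_j$, so one never needs elliptic boundary regularity there; all of the boundary input is absorbed into $\mathcal{E}_j$, whose range can be pushed into an arbitrarily thin collar, after which the nested-cutoff parametrix argument only sees the elliptic interior and the scattering face. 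One must also make sure the threshold on $\mathsf{F}$ can be fixed (for each $k$) so that Lemma \ref{lemma7}, its weighted version, and the ellipticity of $-\Delta^\mathcal{B}_\mathsf{F}$ all hold simultaneously.
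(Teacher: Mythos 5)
Your proposal takes essentially the same route as the proof the paper points to (\cite[Lemma~4.9]{stefanov2018inverting}), which the paper does not spell out: reduce to the Lax--Milgram invertibility of $-\Delta^\mathcal{B}_\mathsf{F}$ on $\dot H^{1,k}_{sc}$ (Lemma~\ref{lemma7} and its weighted version) by subtracting a collar extension of the boundary data, get uniqueness from that same injectivity, and then bootstrap interior scattering regularity with a nested chain of cutoffs and a parametrix for the fully elliptic $-\Delta^\mathcal{B}_\mathsf{F}\in\mathrm{Diff}^{2,0}_{sc}$.

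Two small points worth flagging. First, the reduction of the weighted case to $k=0$ by conjugating with $x^{\mp k}$ is not quite ``verbatim'': conjugation leaves the order $(2,0)$ and the boundary/fiber symbols unchanged, but it introduces commutator terms of the form $x\cdot(\text{bounded})$ against $\mathrm{d}^\mathcal{B}_\mathsf{F}$ and $\delta^\mathcal{B}_\mathsf{F}$, and the conjugated operator is no longer exactly $(\mathrm{d}^\mathcal{B}_\mathsf{F})^*\mathrm{d}^\mathcal{B}_\mathsf{F}$; one should say explicitly that these error terms fall into the $A+B$ slot of \eqref{laplaciandecomposition} and are absorbed by taking $\mathsf{F}$ large (with a $k$-dependent threshold), exactly as in the unweighted coercivity estimate. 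Second, your bootstrap produces the bound $\|\phi B_{\Omega_j}\psi\|_{H^{s,r}_{sc}}\lesssim\|\psi\|_{H^{1/2,r}_{sc}}$, which for $s\geq1$ is stronger than what is stated (since $H^{s-1/2,r}\hookrightarrow H^{1/2,r}$), but for $1/2<s<1$ the claimed estimate $\lesssim\|\psi\|_{H^{s-1/2,r}_{sc}}$ involves a \emph{weaker} norm on the right, and $B_{\Omega_j}$ is not even a priori defined on $H^{s-1/2,r}\supsetneq H^{1/2,r}$; covering that range would require a transposition/duality argument or low-regularity elliptic theory, which your proposal does not supply. If the intended range is $s\geq1$ (which is all that the subsequent applications in the paper use), your argument is complete.
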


Then we can write
\[
\begin{split}
r_{21}\mathcal{S}_{\mathsf{F},\Omega_2}GN_\mathsf{F}=\mathcal{S}_{\mathsf{F},\Omega_1}+\mathrm{d}_\mathsf{F}^\mathcal{B}\left((\Delta_{\mathsf{F},\Omega_1}^\mathcal{B})^{-1}\delta^\mathcal{B}_\mathsf{F}-r_{21}(\Delta_{\mathsf{F},\Omega_2}^\mathcal{B})^{-1}\delta^\mathcal{B}_\mathsf{F}e_{12}+B_{\Omega_1}\gamma_{\partial_{int}\Omega_1}(\Delta_{\mathsf{F},\Omega_2}^\mathcal{B})^{-1}\delta^\mathcal{B}_\mathsf{F}e_{12}\right)\\
-\mathrm{d}_\mathsf{F}^\mathcal{B}B_{\Omega_1}\gamma_{\partial_{int}\Omega_1}(\Delta_{\mathsf{F},\Omega_2}^\mathcal{B})^{-1}\delta^\mathcal{B}_\mathsf{F}e_{12}+K_1.
\end{split}
\]
Notice that
\[
\Delta_{\mathsf{F},\Omega_1}^\mathcal{B}\left[(\Delta_{\mathsf{F},\Omega_1}^\mathcal{B})^{-1}\delta^\mathcal{B}_\mathsf{F}-r_{21}(\Delta_{\mathsf{F},\Omega_2}^\mathcal{B})^{-1}\delta^\mathcal{B}_\mathsf{F}e_{12}+B_{\Omega_1}\gamma_{\partial_{int}\Omega_1}(\Delta_{\mathsf{F},\Omega_2}^\mathcal{B})^{-1}\delta^\mathcal{B}_\mathsf{F}e_{12}\right]=0\quad\text{in }\Omega_1,
\]
and
\[
\gamma_{\partial_{int}\Omega_1}\left((\Delta_{\mathsf{F},\Omega_1}^\mathcal{B})^{-1}\delta^\mathcal{B}_\mathsf{F}-r_{21}(\Delta_{\mathsf{F},\Omega_2}^\mathcal{B})^{-1}\delta^\mathcal{B}_\mathsf{F}e_{12}+B_{\Omega_1}\gamma_{\partial_{int}\Omega_1}(\Delta_{\mathsf{F},\Omega_2}^\mathcal{B})^{-1}\delta^\mathcal{B}_\mathsf{F}e_{12}\right)=0.
\]
Therefore
\[
\mathcal{S}_{\mathsf{F},\Omega_1}r_{21}\mathcal{S}_{\mathsf{F},\Omega_2}GN_\mathsf{F}=\mathcal{S}_{\mathsf{F},\Omega_1}-\mathcal{S}_{\mathsf{F},\Omega_1}\mathrm{d}_\mathsf{F}^\mathcal{B}B_{\Omega_1}\gamma_{\partial_{int}\Omega_1}(\Delta_{\mathsf{F},\Omega_2}^\mathcal{B})^{-1}\delta^\mathcal{B}_\mathsf{F}e_{12}+\mathcal{S}_{\mathsf{F},\Omega_1}K_1.
\]
Now write $\Omega=\Omega_0$, and $e_{0j}$ for the extension map from $\Omega_0$ to $\Omega_j$. Composing from the right, we obtain
\[
\mathcal{S}_{\mathsf{F},\Omega_1}r_{21}\mathcal{S}_{\mathsf{F},\Omega_2}GN_\mathsf{F}=\mathcal{S}_{\mathsf{F},\Omega_1}-\mathcal{S}_{\mathsf{F},\Omega_1}\mathrm{d}_\mathsf{F}^\mathcal{B}B_{\Omega_1}\gamma_{\partial_{int}\Omega_1}(\Delta_{\mathsf{F},\Omega_2}^\mathcal{B})^{-1}\delta^\mathcal{B}_\mathsf{F}e_{02}+\mathcal{S}_{\mathsf{F},\Omega_1}K_1e_{01}.
\]

Using the arguments as in \cite[Lemma 4.10, 4.11]{stefanov2018inverting}, we know that 
\[
K_2=-\mathcal{S}_{\mathsf{F},\Omega_1}\mathrm{d}_\mathsf{F}^\mathcal{B}B_{\Omega_1}\gamma_{\partial_{int}\Omega_1}(\Delta_{\mathsf{F},\Omega_2}^\mathcal{B})^{-1}\delta^\mathcal{B}_\mathsf{F}e_{02}+\mathcal{S}_{\mathsf{F},\Omega_1}K_1e_{01}
\]
is smoothing and small if $\Omega\subset\{x\leq \delta\}$ with $\delta$ small enough. In particular, for any $\epsilon>0$, we can take $\delta$ small enough such that
\[
\|K_2\|_{\mathcal{L}(x^kL^2_{sc}(\Omega),\bar{H}_{sc}^{s,r}(\Omega_1))}\leq\epsilon.
\]
Now we have
\begin{equation}\label{SF0}
\mathcal{S}_{\mathsf{F},\Omega_1}r_{21}\mathcal{S}_{\mathsf{F},\Omega_2}GN_\mathsf{F}=\mathcal{S}_{\mathsf{F},\Omega_1}e_{01}+K_2.
\end{equation}

We need the following lemma:

\begin{lemma}\label{est_extension}
For any $\mathsf{F}>0$ and $r\in\mathbb{R}$, 
\[
\|v\|_{\bar{H}^{1,r}_{sc}(\Omega_j)}\leq C\left(\|x^{-r}\mathrm{d}^\mathcal{B}_\mathsf{F}v\|_{L^2_{sc}(\Omega_j)}+\|v\|_{x^{-r}L^2_{sc}(\Omega_j)}\right),
\]
for $(1,0)$-tensors $v\in\bar{H}^{1,r}_{sc}(\Omega_j)$.
\end{lemma}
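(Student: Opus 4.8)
The plan is to reduce to the unweighted case $r=0$, prove the inequality first on all of $X$ (which has only the scattering boundary) by means of the Bochner-type identity \eqref{laplaciandecomposition}, and then transfer it to the subdomain $\Omega_j$ by an extension argument. First I would remove the weight: conjugating gives $x^{-r}\mathrm{d}^\mathcal{B}_\mathsf{F}x^{r}=\mathrm{d}^\mathcal{B}_\mathsf{F}+E_r$ with $E_r=x^{-r}[\mathrm{d}^\mathcal{B}_\mathsf{F},x^{r}]\in x\,\mathrm{Diff}^{0}_{sc}$, since $x^{2}\partial_x$ contributes an extra factor $x^{r+1}$ upon commuting with $x^{r}$ while $x\partial_{y^j}$ annihilates powers of $x$. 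Writing $u=x^{-r}v$, the norms $\|u\|_{\bar H^{1,0}_{sc}(\Omega_j)}$ and $\|v\|_{\bar H^{1,r}_{sc}(\Omega_j)}$ are equivalent, and since $\|E_ru\|_{L^2_{sc}}\le C\|u\|_{L^2_{sc}}$ it is enough to prove
\[
\|u\|_{\bar H^{1,0}_{sc}(\Omega_j)}\le C\big(\|\mathrm{d}^\mathcal{B}_\mathsf{F}u\|_{L^2_{sc}(\Omega_j)}+\|u\|_{L^2_{sc}(\Omega_j)}\big).
\]

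Next I would establish the corresponding estimate on all of $X$: for every $(1,0)$-tensor $w\in H^{1,0}_{sc}(X)$,
\[
\|w\|_{H^{1,0}_{sc}(X)}\le C\big(\|\mathrm{d}^\mathcal{B}_\mathsf{F}w\|_{L^2_{sc}(X)}+\|w\|_{L^2_{sc}(X)}\big).
\]
Because $-\delta^\mathcal{B}_\mathsf{F}$ is the genuine $g_{sc}$-adjoint of $\mathrm{d}^\mathcal{B}_\mathsf{F}$ and the scattering half-densities decay at $x=0$, there is no contribution from $\partial X$ and $\|\mathrm{d}^\mathcal{B}_\mathsf{F}w\|_{L^2_{sc}}^2=\langle -\Delta^\mathcal{B}_\mathsf{F}w,w\rangle_{L^2_{sc}}$. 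Feeding in \eqref{laplaciandecomposition} and pairing each summand with $w$ turns the right-hand side into $\tfrac{n-2}{n}\big(\|\nabla w\|_{L^2_{sc}}^2+\mathsf{F}^2\|w\|_{L^2_{sc}}^2\big)+\tfrac1n\|\mathfrak{D}_1w\|_{L^2_{sc}}^2+\tfrac1n\|\mathfrak{D}_2w\|_{L^2_{sc}}^2+\tfrac1n\|\mathfrak{D}_3^{*}w\|_{L^2_{sc}}^2+\tfrac2n\big(\|\nabla_yw_y\|_{L^2_{sc}}^2-\|\nabla_y^{*}w_y\|_{L^2_{sc}}^2\big)+\langle Aw,w\rangle+\langle Bw,w\rangle$. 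Using $\|\nabla_y^{*}w_y\|_{L^2_{sc}}^2\le\|\nabla_yw_y\|_{L^2_{sc}}^2+\|w\|_{L^2_{sc}}^2$ as in the proof of Lemma \ref{lemma7}, the nonnegativity of the squared terms, $|\langle Aw,w\rangle|\le\varepsilon\|\nabla w\|_{L^2_{sc}}^2+C_\varepsilon\|w\|_{L^2_{sc}}^2$ (as $A$ is first order) and $|\langle Bw,w\rangle|\le C\|w\|_{L^2_{sc}}^2$, one obtains $\big(\tfrac{n-2}{n}-\varepsilon\big)\|\nabla w\|_{L^2_{sc}}^2\le\|\mathrm{d}^\mathcal{B}_\mathsf{F}w\|_{L^2_{sc}}^2+C_\varepsilon\|w\|_{L^2_{sc}}^2$; choosing $\varepsilon<\tfrac{n-2}{n}$ (which is where $n\ge3$ is used) gives the $X$-estimate.

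Finally, given $v\in\bar H^{1,0}_{sc}(\Omega_j)$, I would pick an extension $\tilde v\in H^{1,0}_{sc}(X)$ with $\tilde v|_{\Omega_j}=v$ that is (nearly) energy-minimizing on $X\setminus\overline{\Omega_j}$, so that $\|\tilde v\|_{H^{1,0}_{sc}(X\setminus\overline{\Omega_j})}^2\le C\|\gamma_{\partial_{int}\Omega_j}v\|_{H^{1/2}}^2\le\varepsilon\|v\|_{\bar H^{1,0}_{sc}(\Omega_j)}^2+C_\varepsilon\|v\|_{L^2_{sc}(\Omega_j)}^2$, the last step using the interpolated trace bound $\|\gamma_{\partial_{int}\Omega_j}v\|_{H^{1/2}}^2\le C\|v\|_{\bar H^{1,0}_{sc}(\Omega_j)}\|v\|_{L^2_{sc}(\Omega_j)}$. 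Applying the $X$-estimate to $\tilde v$, using that $\mathrm{d}^\mathcal{B}_\mathsf{F}$ is local so $\mathrm{d}^\mathcal{B}_\mathsf{F}\tilde v|_{\Omega_j}=\mathrm{d}^\mathcal{B}_\mathsf{F}v$, and splitting the integrals over $\Omega_j$ and $X\setminus\overline{\Omega_j}$, yields $\|v\|_{\bar H^{1,0}_{sc}(\Omega_j)}^2\le\|\tilde v\|_{H^{1,0}_{sc}(X)}^2\le C\|\mathrm{d}^\mathcal{B}_\mathsf{F}v\|_{L^2_{sc}(\Omega_j)}^2+C\varepsilon\|v\|_{\bar H^{1,0}_{sc}(\Omega_j)}^2+C_\varepsilon\|v\|_{L^2_{sc}(\Omega_j)}^2$, and absorbing the $\varepsilon$-term for $\varepsilon$ small finishes the proof. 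The step I expect to be the crux is the coercivity on $X$: it is precisely a conformal Korn inequality and genuinely requires $n\ge3$ through the factor $\tfrac{n-2}{n}$ in \eqref{laplaciandecomposition}, since the trace-free symmetrized covariant derivative does not control $\mathrm{div}\,w$ pointwise, forcing the use of the Bochner identity and integration by parts; the attendant technical point is then the treatment of the interior boundary $\partial_{int}\Omega_j$ via the extension and the interpolated trace estimate, while the scattering boundary $\partial X$ costs nothing because all adjoints are taken with respect to $g_{sc}$.
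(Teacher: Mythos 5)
Your reduction to $r=0$ by conjugating with $x^{r}$ and your coercivity estimate on $X$ via the Bochner decomposition \eqref{laplaciandecomposition} are both fine; the latter is precisely the computation carried out in Lemma~\ref{lemma7}, applied to compactly supported $(1,0)$-tensors on $X$. The genuine gap is in your last step, the extension from $\Omega_j$.

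The ``interpolated trace bound'' $\|\gamma_{\partial_{int}\Omega_j}v\|^2_{H^{1/2}}\leq C\|v\|_{\bar H^{1,0}_{sc}(\Omega_j)}\|v\|_{L^2_{sc}(\Omega_j)}$ is false. The trace map is bounded $H^1\to H^{1/2}$, and that exponent is sharp: there is no companion estimate from $L^2$ into any boundary Sobolev space, since $L^2$ functions have no trace, so there is nothing to interpolate against. Concretely, in a boundary collar take boundary data $\psi_k$ with $\|\psi_k\|_{L^2}=1$ and frequency $k$ (so $\|\psi_k\|^2_{H^{1/2}}\sim k$), and set $v_k(x',x_n)=\psi_k(x')\chi(kx_n)$; then $\|v_k\|_{L^2}\sim k^{-1/2}$, $\|v_k\|_{H^1}\sim k^{1/2}$, so the right-hand side is $O(1)$ while $\|\gamma v_k\|^2_{H^{1/2}}\sim k\to\infty$. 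What is true is the Agmon/Ehrling bound $\|\gamma v\|^2_{L^2(\partial\Omega)}\leq C\|v\|_{H^1}\|v\|_{L^2}$, but $L^2$ boundary data does not produce a bounded $H^1$ extension, so it cannot feed your absorption argument. Thus a generic (Stein, or energy-minimizing) extension only yields $\|\tilde v\|^2_{H^{1,0}_{sc}(X\setminus\overline{\Omega_j})}\leq C\|v\|^2_{\bar H^{1,0}_{sc}(\Omega_j)}$ with no small parameter in front, and the absorption collapses; you have not actually decoupled $\|\mathrm{d}^\mathcal{B}_\mathsf{F}\tilde v\|$ on the complement from the full $H^1$ norm of $v$.

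The paper avoids this by building the extension $E$ so that it bounds $\|\mathrm{d}^\mathcal{B}_\mathsf{F}Ev\|_{L^2_{sc}}$ directly by $\|\mathrm{d}^\mathcal{B}_\mathsf{F}v\|_{L^2_{sc}}+\|v\|_{L^2_{sc}}$ --- this is the estimate \eqref{extension_vector_est} --- rather than by $\|v\|_{H^1}$. That requires a tailored extension: locally flattening $\partial_{int}\Omega_j$, the higher-order reflection $E_1v=\sum_{k=1}^3 c_k\Phi_k^*v$ with $\Phi_k(x',x_n)=(x',-kx_n)$ and $c_k$ chosen by a Vandermonde system, is designed exactly so that $\Phi_k^*$ commutes with the (constant-coefficient) covariant derivative, $\mathrm{d}\,\Phi_k^*=\Phi_k^*\,\mathrm{d}$, hence $\|\mathrm{d}^\mathcal{B}_{g_0}E_1v\|\leq C\|\mathrm{d}^\mathcal{B}_{g_0}v\|$; variable coefficients, the passage from $g_0$ to $g$, and from $\mathrm{d}^\mathcal{B}$ to $\mathrm{d}^\mathcal{B}_\mathsf{F}$, only introduce zeroth-order errors, which is why the $\|v\|_{L^2}$ term on the right suffices. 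Then applying the Lemma~\ref{lemma7}-type coercivity to $Ev$ on the larger domain, where no interior boundary term appears, gives the claim. If you want a proof that does not use the tailored reflection, you would need a different mechanism to control the extension in the $\mathrm{d}^\mathcal{B}_\mathsf{F}$-seminorm rather than the full $H^1$ norm; the trace interpolation you proposed does not supply one.
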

\begin{proof}
Similar to the proof of \cite[Lemma 4.5]{stefanov2018inverting}, we only need to prove the case $r=0$.

The key is to prove the existence of an extension. 
 Let $\tilde{\Omega}_j$ be a domain in $X$ with $C^\infty$ boundary, transversal to $\partial X$, containing $\overline{\Omega_j}$. We claim that there exists a continuous map 
\[
E:\bar{H}^{1,0}_{sc}(\Omega_j)\rightarrow \dot{H}^{1,0}_{sc}(\tilde{\Omega}_j) 
\]
such that
\begin{equation}\label{extension_vector_est}
\|\mathrm{d}_\mathsf{F}^\mathcal{B}Ev\|_{L^2_{sc}(\tilde{\Omega}_j)}+\|Ev\|_{L^2_{sc}(\tilde{\Omega}_j)}\leq C\left(\|\mathrm{d}_\mathsf{F}^\mathcal{B}v\|_{L^2_{sc}(\Omega_j)}+\|v\|_{L^2_{sc}(\Omega_j)}\right),\quad v\in\bar{H}^{1,0}_{sc}(\Omega_j),
\end{equation}
where $v$ is a scattering $(1,0)$-tensor .

We can use the local diffeomorphism of $X$ to $\overline{\mathbb{R}^n}$, and $\overline{\Omega_j}$ to $\overline{\mathbb{R}^n_+}$ to use the extension map 
\[
E_1:C^1(\overline{\mathbb{R}^n_+})\rightarrow C^1(\overline{\mathbb{R}^n}),
\]
which will be defined below.

Take $\Phi_k(x',x_n)=(x',-kx_n)$ for $x_n<0$ and define
\[
\begin{split}
\left(E_1\sum v^i\partial_{x^i}\right)(x',x_n)&=\sum_{k=1}^3c_k\Phi^*_k\left(\sum v^i\partial_{x^i}\right),\quad x_n<0,\\
\left(E_1\sum v^i\partial_{x^i}\right)(x',x_n)&=\sum v^i\partial_{x^i},\quad x_n\geq 0.
\end{split}
\]

Then $\Phi^*_k$ acts on $1$-vector as
\[
\begin{split}
\Phi^*_kv^i\partial_{x^i}&=v^i(x',-kx_n)\partial_{x^i},\quad i\neq n,\\
\Phi^*_kv^n\partial_{x^n}&=-\frac{1}{k}v^n(x',-kx_n)\partial_{x^n},\\
\partial_j\Phi^*_kv^i\partial_{x^i}&=(\partial_jv^i)(x',-kx_n)\partial_{x^i},\quad i, j\neq n,\\
\partial_j\Phi^*_kv^n\partial_{x^n}&=-\frac{1}{k}(\partial_jv^n)(x',-kx_n)\partial_{x^n},\quad j\neq n,\\
\partial_n\Phi^*_kv^i\partial_{x^i}&=-k(\partial_nv^i)(x',-kx_n)\partial_{x^i},\quad i\neq n,\\
\partial_n\Phi^*_kv^n\partial_{x^n}&=(\partial_nv^n)(x',-kx_n)\partial_{x^n}.
\end{split}
\]
So to match the derivatives at $x_n=0$, we need to take $c_1,c_2,c_3$ to satisfy the equation
\[
\left(\begin{array}{ccc}
-1 & -\frac{1}{2} & -\frac{1}{3}\\
1 & 1 &1\\
-1 & -2 &-3
\end{array}\right)\left(\begin{array}{c}
c_1\\
c_2\\
c_3
\end{array}
\right)=\left(\begin{array}{c}
1\\
1\\
1
\end{array}\right).
\]
The matrix on the left hand side is Vandermode and is thus invertible.
With $c_k$ chosen as above, we have that $E_1:C_c^\infty(\mathbb{R}_+^n)\rightarrow C_c^1(\mathbb{R}^n)$ can be extended continuously to a map $H^1(\mathbb{R}^n_+)\rightarrow H^1(\mathbb{R}^n)$.

Notice that $\Phi^*_k$ acts on $1+1$ tensors in the following way
\[
\begin{split}
\Phi^*_ku^i_j\partial_{x^i}\otimes\mathrm{d}x^j&=u^i_j(x',-kx_n)\partial_{x^i}\otimes\mathrm{d}x^j,\quad i,j\neq n,\\
\Phi^*_ku^i_n\partial_{x^i}\otimes\mathrm{d}x^n&=-ku^i_n(x',-kx_n)\partial_{x^i}\otimes\mathrm{d}x^n,\quad i\neq n,\\
\Phi^*_ku^n_j\partial_{x^n}\otimes\mathrm{d}x^j&=-\frac{1}{k}u^n_j(x',-kx_n)\partial_{x^n}\otimes\mathrm{d}x^j,\quad j\neq n,\\
\Phi^*_ku^n_n\partial_{x^n}\otimes\mathrm{d}x^n&=u^n_n(x',-kx_n)\partial_{x^n}\otimes\mathrm{d}x^n.
\end{split}
\]
We can then verify that
\[
\begin{split}
(\partial_j\Phi^*_kv^i\partial_{x^i})\otimes\mathrm{d}x^j&=\partial_ju(x',-kx_n)\partial_{x^i}\otimes\mathrm{d}x^j=\Phi^*_k(\partial_jv^i\partial_{x^i}\otimes\mathrm{d}x^j),\quad i,j\neq n,\\
(\partial_n\Phi^*_kv^i\partial_{x^i})\otimes\mathrm{d}x^n&=-k\partial_nu(x',-kx_n)\partial_{x^i}\otimes\mathrm{d}x^n=\Phi^*_k(\partial_nv^i\partial_{x^j}\otimes\mathrm{d}x^n),\quad i\neq n,\\
(\partial_j\Phi^*_kv^n\partial_{x^n})\otimes\mathrm{d}x^j&=-\frac{1}{k}\partial_ju(x',-kx_n)\partial_{x^n}\otimes\mathrm{d}x^j=\Phi^*_k(\partial_jv^n\partial_{x^n}\otimes\mathrm{d}x^j),\quad j\neq n,\\
(\partial_n\Phi^*_kv^n\partial_{x^n})\otimes\mathrm{d}x^n&=\partial_nu(x',-kx_n)\partial_{x^n}\otimes\mathrm{d}x^n=\Phi^*_k(\partial_nv^n\partial_{x^n}\otimes\mathrm{d}x^n).
\end{split}
\]
Therefore, with $g$ replaced by a translation-invariant Riemannian metric $g_0$ (e.g., the Euclidean one), we have
\[
\mathrm{d}\Phi^*_k=\Phi^*_k\mathrm{d}.
\]
 Note that then
\[
\mathrm{d}_{g_0}^\mathcal{B}v=\mathrm{d}v-\frac{1}{n}\mathrm{trace}(\mathrm{d}_{g_0}v)\delta^i_j.
\]
This yields
\[
\|\mathrm{d}_{g_0}^\mathcal{B}\Phi^*_kv\|_{L^2(\mathbb{R}^n_-)}\leq C\|\mathrm{d}_{g_0}^\mathcal{B}v\|_{L^2(\mathbb{R}^n_+)},
\]
and consequently
\[
\|\mathrm{d}_{g_0}^\mathcal{B}E_1v\|_{L^2(\mathbb{R}^n)}\leq C\|\mathrm{d}_{g_0}^\mathcal{B}v\|_{L^2(\mathbb{R}^n_+)}.
\]

Then one use a partition of unity $\{\rho_k\}$ to localize on $\Omega_j$ to finish the proof of \eqref{extension_vector_est} as in the proof of \cite[Lemma 4.5]{stefanov2018inverting}.  Locally identifying $\mathrm{supp}\rho_k$ with $\overline{\mathbb{R}^n_+}$ and $X$ with $\overline{\mathbb{R}^n}$, we denote $E_{1,k}$ the extension map from $\overline{\mathbb{R}^n_+}$ to $X$.  Take $\psi_k$ to be identically $1$ near the support of $\rho_k$. Then
\[
\sum\psi_kE_{1,k}\rho_k
\]
is an extension map from $H^1(\Omega_j)$ to $H^1(\tilde{\Omega}_j)$.
Note that while $\mathrm{d}=\mathrm{d}_g$ depends on the metric $g$ we have locally,
\[
\|\mathrm{d}^\mathcal{B}\psi_kE_{k,1}\rho_kv\|_{L^2(\mathbb{R}^n)}+\|E_{k,1}\rho_kv\|_{L^2(\mathbb{R}^n)}\leq C\|\mathrm{d}^\mathcal{B}\rho_kv\|_{L^2(\mathbb{R}^n_+)}+\|\rho_kv\|_{L^2(\mathbb{R}^n_+)},
\]
since $\mathrm{d}^\mathcal{B}_gv=\mathrm{d}^\mathcal{B}_{g_0}v+Rv$ with a zeroth order $R$. Since $\mathrm{d}^\mathcal{B}_\mathsf{F}$ differs from $\mathrm{d}^\mathcal{B}$ by a zeroth order term, we have
\[
\|\mathrm{d}^\mathcal{B}_\mathsf{F}\psi_kE_{k,1}\rho_kv\|_{L^2(\mathbb{R}^n)}+\|E_{k,1}\rho_kv\|_{L^2(\mathbb{R}^n)}\leq C\|\mathrm{d}_\mathsf{F}^\mathcal{B}\rho_kv\|_{L^2(\mathbb{R}^n_+)}+\|\rho_kv\|_{L^2(\mathbb{R}^n_+)}.
\]
Summing over $k$ yields \eqref{extension_vector_est}. With this extension found, we can prove
\[
\|v\|_{\bar{H}^{1,0}_{sc}(\Omega_j)}\leq C\left(\|\mathrm{d}^\mathcal{B}_\mathsf{F}v\|_{L^2_{sc}(\Omega_j)}+\|v\|_{L^2_{sc}(\Omega_j)}\right),
\]
and completes the proof of the lemma.
\end{proof}

Next we prove a mapping property for $\mathrm{d}_\mathsf{F}^\mathcal{B}$.
\begin{lemma}
The map
\[
\mathrm{d}_\mathsf{F}^\mathcal{B}:\dot{H}_{sc}^{1,r}(\Omega_1\setminus\Omega)\rightarrow H^{0,r}_{sc}(\Omega_1\setminus\Omega)
\]
is injective, with a continuous left inverse $P_{\Omega_1\setminus\Omega}:H^{0,r}_{sc}(\Omega_1\setminus\Omega)\rightarrow H^{1,r-2}_{sc}(\Omega_1\setminus\Omega)$, where $r\leq-\frac{n-3}{2}$.
\end{lemma}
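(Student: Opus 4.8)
The plan is to reduce the statement to the already-established invertibility of $-\Delta^\mathcal{B}_\mathsf{F}$ on $\dot H^{1,0}_{sc}$ from Lemma~\ref{lemma7}, combined with the coercivity estimate of Lemma~\ref{est_extension}. First I would observe that injectivity of $\mathrm{d}_\mathsf{F}^\mathcal{B}$ on $\dot H_{sc}^{1,r}(\Omega_1\setminus\Omega)$ follows directly from Lemma~\ref{est_extension}: if $\mathrm{d}_\mathsf{F}^\mathcal{B}v=0$ and $v$ is supported in $\overline{\Omega_1\setminus\Omega}$, then that lemma gives $\|v\|_{\bar H^{1,r}_{sc}}\le C\|v\|_{x^{-r}L^2_{sc}}$, which is not yet a contradiction; instead I would use the variational identity $\langle-\Delta^\mathcal{B}_\mathsf{F}v,v\rangle_{L^2_{sc}}=\|\mathrm{d}_\mathsf{F}^\mathcal{B}v\|^2_{L^2_{sc}}$ (valid for compactly supported $v$) together with the lower bound $\|\mathrm{d}_\mathsf{F}^\mathcal{B}v\|^2_{L^2_{sc}}\ge c(\|\nabla v\|^2_{L^2_{sc}}+\|v\|^2_{L^2_{sc}})$ established in the proof of Lemma~\ref{lemma7} for $\mathsf{F}$ large. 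This forces $v=0$, giving injectivity, and moreover gives the a priori estimate $\|v\|_{\dot H^{1,r}_{sc}}\le C\|\mathrm{d}_\mathsf{F}^\mathcal{B}v\|_{H^{0,r}_{sc}}$ on the subspace of tensors supported in $\overline{\Omega_1\setminus\Omega}$. (The weight $r$ is handled by conjugating with $x^{-r}$ as in \cite{stefanov2018inverting}, which only changes lower-order terms, so the large-$\mathsf{F}$ argument goes through.)

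Next I would construct the left inverse. The natural candidate is $P_{\Omega_1\setminus\Omega}:=(\Delta^\mathcal{B}_{\mathsf{F},\Omega_1\setminus\Omega})^{-1}\delta^\mathcal{B}_\mathsf{F}$, where $\Delta^\mathcal{B}_{\mathsf{F},\Omega_1\setminus\Omega}$ denotes the operator $-\Delta^\mathcal{B}_\mathsf{F}$ realized with the Dirichlet condition on the interior boundary $\partial_{int}(\Omega_1\setminus\Omega)$ — whose invertibility between $\dot H^{1,0}_{sc}$ and $\bar H^{-1,0}_{sc}$ is Lemma~\ref{lemma7}, the weighted versions following by the same conjugation. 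Indeed, for $v\in\dot H^{1,r}_{sc}(\Omega_1\setminus\Omega)$ we have $\delta^\mathcal{B}_\mathsf{F}\mathrm{d}^\mathcal{B}_\mathsf{F}v=-\Delta^\mathcal{B}_\mathsf{F}v$ as an element of $\bar H^{-1,r}_{sc}$, and since $v$ already lies in $\dot H^{1,r}_{sc}$ it is the unique solution of the Dirichlet problem with that right-hand side, so $(\Delta^\mathcal{B}_{\mathsf{F},\Omega_1\setminus\Omega})^{-1}\delta^\mathcal{B}_\mathsf{F}\mathrm{d}^\mathcal{B}_\mathsf{F}v=v$. The mapping property $P_{\Omega_1\setminus\Omega}:H^{0,r}_{sc}\to H^{1,r-2}_{sc}$ then comes from the boundedness $\delta^\mathcal{B}_\mathsf{F}:H^{0,r}_{sc}\to H^{-1,r}_{sc}$ (a first-order scattering operator, losing two orders in the weight-decay bookkeeping of the scattering calculus, which is where the hypothesis $r\le-\frac{n-3}{2}$ enters, matching the decay of $\mathrm{d}^\mathcal{B}_\mathsf{F}$ acting on $H^{1,r-2}_{sc}$) composed with the inverse of $-\Delta^\mathcal{B}_\mathsf{F}$, which gains two derivatives.

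The main obstacle I anticipate is the careful bookkeeping of the weight indices and the role of the constraint $r\le-\frac{n-3}{2}$: one must check that the Dirichlet realization $\Delta^\mathcal{B}_{\mathsf{F},\Omega_1\setminus\Omega}$ is invertible \emph{in the weighted spaces}, not just at weight $0$, and that the composition $(\Delta^\mathcal{B}_{\mathsf{F},\Omega_1\setminus\Omega})^{-1}\delta^\mathcal{B}_\mathsf{F}$ genuinely lands in $H^{1,r-2}_{sc}(\Omega_1\setminus\Omega)$ with the interior Dirichlet condition preserved. Here I would follow the template of \cite[Lemma~4.13]{stefanov2018inverting} almost verbatim, conjugating by powers of $x$ to transfer the weight to a lower-order perturbation of the operator and invoking Lemma~\ref{lemma7} together with the ellipticity of $-\Delta^\mathcal{B}_\mathsf{F}$; the convexity/foliation geometry is not needed for this particular lemma, only the large-$\mathsf{F}$ coercivity already in hand. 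A secondary point to be careful about is that $\Omega_1\setminus\Omega$ has \emph{two} boundary pieces (an artificial one on $\partial X$ and an interior one), so the spaces $\dot H$, $\bar H$ must be interpreted with support/restriction conditions only at the interior boundary — exactly as in the $\Omega_j$ construction earlier in this section — and the extension argument of Lemma~\ref{est_extension} must be applied across $\partial_{int}(\Omega_1\setminus\Omega)$.
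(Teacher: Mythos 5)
Your proposal takes a genuinely different route from the paper. The paper does not invert a Dirichlet Laplacian on the shell $\Omega_1\setminus\Omega$ at all; instead it runs a direct integral-geometric argument, using the fundamental theorem of calculus along a family of geodesics running from $\partial_{int}\Omega_1$ to $\partial_{int}\Omega\cup\partial X$ to obtain the weighted Poincar\'e estimate
\[
\|u\|_{H^{0,r-2}_{sc}(\Omega_1\setminus\Omega)}\leq C\mathsf{F}^{-1}\|\mathrm{d}^\mathcal{B}_\mathsf{F}u\|_{H^{0,r}_{sc}(\Omega_1\setminus\Omega)},
\]
and then upgrades it to a bound on $\|u\|_{\dot H^{1,r-2}_{sc}}$ via Lemma~\ref{est_extension}. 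The left inverse $P_{\Omega_1\setminus\Omega}$ in the paper is the abstract bounded left inverse that this coercivity estimate produces, not the explicit operator $(\Delta^\mathcal{B}_{\mathsf{F},\Omega_1\setminus\Omega})^{-1}\delta^\mathcal{B}_\mathsf{F}$.

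There are two concrete problems with your account. First, your explanation of where $r\leq-(n-3)/2$ comes from is not right: $\delta^\mathcal{B}_\mathsf{F}$ is a first-order scattering differential operator and maps $H^{s,r}_{sc}\to H^{s-1,r}_{sc}$ with \emph{no} change in the weight index, so nothing in the purely pseudodifferential bookkeeping forces a shift of $2$ in the weight or the inequality $r\leq-(n-3)/2$. In the paper the weight shift and the constraint both have a specific geometric origin: one compares the $g$-norm and the $g_{sc}$-norm of a $(1,0)$-tensor ($|v|_{g_{sc}}\leq Cx^{-2}|v|_g$), converts the measure $dg$ to the scattering measure $x^{-n-1}\,dx\,dy$, and the $x$-monotonicity along the chosen geodesics forces the auxiliary exponent $k$ in $x^k$ to be nonnegative; with $r=-(k+(n-3)/2)$ this is exactly $r\leq-(n-3)/2$. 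Your proof has no mechanism to produce this restriction and, if your construction worked as stated, it would give the stronger mapping property $H^{0,r}_{sc}\to H^{1,r}_{sc}$ with no condition on $r$ at all, which should make you suspicious. Second, your construction presupposes invertibility of a Dirichlet realization of $-\Delta^\mathcal{B}_\mathsf{F}$ on the shell $\Omega_1\setminus\Omega$ in weighted spaces; Lemma~\ref{lemma7} is proved for the regions $\Omega_j$ with a single interior boundary, and for the application in the paper the relevant $v$ vanishes only on $\partial_{int}\Omega_1$ (not on $\partial_{int}\Omega$), so the boundary condition to impose at $\partial_{int}\Omega$ is not Dirichlet. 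Making the Laplacian route rigorous would therefore require a separate well-posedness argument for a mixed boundary problem on the shell, together with a correct account of the weight constraint, neither of which your sketch supplies.
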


\begin{proof}
We will use the identity
\begin{equation}\label{id_integralmixed}
v(\gamma(s))^i\eta_i(s)=\int_0^s[\mathrm{d}^\mathcal{B}v(\gamma(t))]^i_j\eta_i(t)\dot{\gamma}^j(t)\mathrm{d}t,
\end{equation}
where $\gamma$ is a unit speed geodesic on $(\Omega_1,g)$ with $\gamma(0)\in\partial_{int}\Omega_1$ and $\gamma(\tau)\in\partial_{int}\Omega\cup\partial X$, with $\gamma([0,\tau])\subset\Omega_1\setminus\Omega$, $\eta$ is a covector field on $\gamma$ conormal to $\dot{\gamma}$. The  identity \eqref{id_integralmixed} is a result of the Fundamental Theorem of Calculus.

As in the proof of \cite[Lemma 4.13]{stefanov2018inverting}, we choose $\gamma$ such that $-\frac{\partial}{\partial t}(x\circ\gamma)$ is bounded below and above by a positive constant. Then $(x\circ\gamma)^2\frac{\partial}{\partial t}(x^{-1}\circ\gamma)$ is also bounded below and above. We can take a smooth family of such geodesics emanating from $\partial_{int}\Omega_1$, parametrized by $\partial_{int}\Omega_1$. Take a measure $\mathrm{d}\omega$ on $\partial_{int}\Omega_1$ such that $\mathrm{d}\omega\mathrm{d}t$ is equivalent to the volume form $\mathrm{d}g$.

For any $k\geq 0$ and $t\in[0,s]$, we have
\[
\begin{split}
&|e^{-\mathsf{F}/(x(\gamma(s)))}x(\gamma(s))^k[v(\gamma(s))]^i\eta_i(s)|^2\\
=&\left|\int_0^se^{-\mathsf{F}/(x(\gamma(s)))}x(\gamma(t))^{k+1}[\mathrm{d}^\mathcal{B}v(\gamma(t))]^i_{j}\eta_i(t)\dot{\gamma}^j(t)e^{-\mathsf{F}(1/x(\gamma(s))-1/x(\gamma(t)))}x(\gamma(t))^{-1}\mathrm{d}t\right|^2\\
\leq &\left(\int_0^\tau e^{-2\mathsf{F}/(x(\gamma(t))}x(\gamma(t))^{2k+2}\left|[\mathrm{d}^\mathcal{B}v(\gamma(t))]^i_j\eta_i(t)\dot{\gamma}^j(t)\right|^2\mathrm{d}t\right)\\
&\times\left(\int_0^se^{-2\mathsf{F}(1/x(\gamma(s))-1/x(\gamma(t)))}x(\gamma(t))^{-2}\mathrm{d}t\right).
\end{split}
\]
Assume $\eta=\eta_x\mathrm{d}x+\eta_y\cdot\mathrm{d}y$ with $|\eta_x|+|\eta_y|<C$, so
\[
\left|[\mathrm{d}^\mathcal{B}v(\gamma(t))]^i_j\eta_i(t)\dot{\gamma}^j(t)\right|\leq Cx(\gamma(t))^{-1}\left|\mathrm{d}^\mathcal{B}v(\gamma(t))\right|_{g_{sc}}.
\]
Because that $(x\circ\gamma)^2\frac{\partial}{\partial t}(x^{-1}\circ\gamma)$ is bounded below by a positive constant, we have
\[
\begin{split}
\int_0^se^{-2\mathsf{F}(1/x(\gamma(s))-1/x(\gamma(t)))}x(\gamma(t))^{-2}\mathrm{d}t\leq & C\int_0^se^{-2\mathsf{F}(1/x(\gamma(s))-1/x(\gamma(t)))}\frac{\partial}{\partial t}(x^{-1}(\gamma(t)))\mathrm{d}t\\
\leq &C\int_{r_0}^{x^{-1}(\gamma(s))}e^{-2\mathsf{F}(1/x(\gamma(s))-r)}\mathrm{d}r\\
\leq &C\mathsf{F}^{-1}.
\end{split}
\]
Therefore
\[
\left|e^{-\mathsf{F}/(x(\gamma(s)))}x(\gamma(s))^k[v(\gamma(s))]^i\eta_i(s)\right|^2\leq C\mathsf{F}^{-1}\int_0^\tau e^{-2\mathsf{F}/(x(\gamma(t))}x(\gamma(t))^{2k}\left|\mathrm{d}^\mathcal{B}v(\gamma(t))\right|_{g_{sc}}^2\mathrm{d}t.
\]
Integrating above inequality over the family of geodesics, we have
\[
\|e^{-\mathsf{F}/x}x^k\mathsf{X}^\perp(v)\|^2_{L^2(\Omega_1\setminus\Omega)}\leq C\mathsf{F}^{-1}\|x^{k}e^{-\mathsf{F}/x}\mathrm{d}^\mathcal{B}v\|^2_{L^2(\Omega_1\setminus\Omega;\,\mathcal{B}_1^1{^{sc}TX})},
\]
where $\mathsf{X}^\perp=\eta_x\mathrm{d}x+\eta_y\cdot\mathrm{d}y$ is chosen to be a smooth section of $T^*(\Omega_1\setminus\overline{\Omega})$ conormal to the tangent vector of the family of geodesics $\mathsf{X}$. Using different families of geodesics and choosing different conormal vectors, such that varying $(\eta_x,\eta_y)$ span $\mathbb{R}^n$, we have
\[
\|x^ke^{-\mathsf{F}/x}v\|^2_{L^2(\Omega_1\setminus\Omega;\,\mathcal{B}^1{^{sc}TX})}\leq C\mathsf{F}^{-1}\|x^{k}e^{-\mathsf{F}/x}\mathrm{d}^\mathcal{B}v\|^2_{L^2(\Omega_1\setminus\Omega;\,\mathcal{B}_1^1{^{sc}TX})}.
\]
Using the estimate
\[
|v(p)|_{g_{sc}}\leq Cx^{-2}(p)|v(p)|_{g},
\]
one can change the volume form to obtain
\[
\|x^{k+(n+1)/2}e^{-\mathsf{F}/x}v\|^2_{L_{sc}^2(\Omega_1\setminus\Omega;\,\mathcal{B}^1{^{sc}TX})}\leq C\mathsf{F}^{-1}\|x^{k-2+(n+1)/2}e^{-\mathsf{F}/x}\mathrm{d}^\mathcal{B}v\|^2_{L_{sc}^2(\Omega_1\setminus\Omega;\,\mathcal{B}_1^1{^{sc}TX})}.
\]

With $u=e^{-\mathsf{F}/x}v$, we have the following Poincar\'e estimate
\[
\|u\|^2_{H_{sc}^{0,r-2}(\Omega_1\setminus\Omega)}\leq C\mathsf{F}^{-1}\|\mathrm{d}_\mathsf{F}^\mathcal{B}u\|^2_{H^{0,r}_{sc}(\Omega_1\setminus\Omega)},
\]
with $r\leq -(n-3)/2$, for $u\in\dot{H}^{1,r}_{sc}(\Omega_1\setminus\Omega)$.

Together with the estimates in Lemma \ref{est_extension},
\[
\|u\|_{\bar{H}^{1,r-2}_{sc}(\Omega_1\setminus\Omega)}\leq C\left(\|\mathrm{d}^\mathcal{B}_\mathsf{F}u\|_{H^{0,r}_{sc}(\Omega_1\setminus\Omega)}+\|u\|_{H^{0,r}_{sc}(\Omega_1\setminus\Omega)}\right),
\]
we end up with
\[
\|u\|_{\dot{H}^{1,r-2}_{sc}(\Omega_1\setminus\Omega)}\leq C\|\mathrm{d}_\mathsf{F}^\mathcal{B}u\|^2_{H^{0,r}_{sc}(\Omega_1\setminus\Omega)},\quad\quad u\in \dot{H}^{1,r}_{sc}(\Omega_1\setminus\Omega).
\]
\end{proof}

In the following, we need to work in a sufficiently negatively weighted space $H^{0,r}_{sc}$, with $r\leq -(n-3)/2$. We start to compute, as in \eqref{SF1},
\[
\mathcal{S}_{\mathsf{F},\Omega}-r_{10}\mathcal{S}_{\mathsf{F},\Omega_1}e_{01}=-\mathrm{d}_\mathsf{F}^\mathcal{B}((\Delta_{\mathsf{F},\Omega}^\mathcal{B})^{-1}\delta^\mathcal{B}_\mathsf{F}-r_{10}(\Delta_{\mathsf{F},\Omega_1}^\mathcal{B})^{-1}\delta^\mathcal{B}_\mathsf{F}e_{01}).
\]

Notice that $u=((\Delta_{\mathsf{F},\Omega}^\mathcal{B})^{-1}\delta^\mathcal{B}_\mathsf{F}-r_{10}(\Delta_{\mathsf{F},\Omega_1}^\mathcal{B})^{-1}\delta^\mathcal{B}_\mathsf{F}e_{01})f$ is the solution to the Dirichlet problem
\[
\Delta_{\mathsf{F}}^\mathcal{B}u=0,\quad u\vert_{\partial\Omega}=-\gamma_{\partial_{int}\Omega}(\Delta_{\mathsf{F},\Omega_1}^\mathcal{B})^{-1}\delta^\mathcal{B}_\mathsf{F}e_{01}f,
\]
so
\[
u=-B_{\Omega}\gamma_{\partial_{int}\Omega}(\Delta_{\mathsf{F},\Omega_1}^\mathcal{B})^{-1}\delta^\mathcal{B}_\mathsf{F}e_{01}f.
\]
Therefore
\[
\begin{split}
\mathcal{S}_{\mathsf{F},\Omega}-r_{10}\mathcal{S}_{\mathsf{F},\Omega_1}e_{01}=&-\mathrm{d}_\mathsf{F}^\mathcal{B}B_{\Omega}\gamma_{\partial_{int}\Omega}(\Delta_{\mathsf{F},\Omega_1}^\mathcal{B})^{-1}\delta^\mathcal{B}_\mathsf{F}e_{01}\\
=&-\mathrm{d}_\mathsf{F}^\mathcal{B}B_{\Omega}\gamma_{\partial_{int}\Omega}P_{\Omega_1\setminus\Omega}\mathrm{d}_\mathsf{F}^\mathcal{B}(\Delta_{\mathsf{F},\Omega_1}^\mathcal{B})^{-1}\delta^\mathcal{B}_\mathsf{F}e_{01}\\
=&-\mathrm{d}_\mathsf{F}^\mathcal{B}B_{\Omega}\gamma_{\partial_{int}\Omega}P_{\Omega_1\setminus\Omega}\mathcal{S}_{\mathsf{F},\Omega_1}e_{01}.
\end{split}
\]
Together with \eqref{SF0}, we have
\[
r_{10}\mathcal{S}_{\mathsf{F},\Omega_1}r_{21}\mathcal{S}_{\mathsf{F},\Omega_2}GN_\mathsf{F}=\mathcal{S}_{\mathsf{F},\Omega}+\mathrm{d}_\mathsf{F}^\mathcal{B}B_{\Omega}\gamma_{\partial_{int}\Omega}P_{\Omega_1\setminus\Omega}\mathcal{S}_{\mathsf{F},\Omega_1}e_{01}+r_{10}K_2.
\]
Using \eqref{SF0} again for the term $\mathcal{S}_{\mathsf{F},\Omega_1}e_{01}$ on the right, we get
\[
r_{10}\mathcal{S}_{\mathsf{F},\Omega_1}r_{21}\mathcal{S}_{\mathsf{F},\Omega_2}GN_\mathsf{F}=\mathcal{S}_{\mathsf{F},\Omega}+\mathrm{d}_\mathsf{F}^\mathcal{B}B_{\Omega}\gamma_{\partial_{int}\Omega}P_{\Omega_1\setminus\Omega}(\mathcal{S}_{\mathsf{F},\Omega_1}r_{21}\mathcal{S}_{\mathsf{F},\Omega_2}GN_\mathsf{F}-K_2)+r_{10}K_2.
\]
Now we can write
\[
(r_{10}-\mathrm{d}_\mathsf{F}^\mathcal{B}B_{\Omega}\gamma_{\partial_{int}\Omega}P_{\Omega_1\setminus\Omega})\mathcal{S}_{\mathsf{F},\Omega_1}r_{21}\mathcal{S}_{\mathsf{F},\Omega_2}GN_\mathsf{F}=\mathcal{S}_{\mathsf{F},\Omega}+(r_{10}-\mathrm{d}_\mathsf{F}^\mathcal{B}B_{\Omega}\gamma_{\partial_{int}\Omega}P_{\Omega_1\setminus\Omega})K_2.
\]
Adding $\mathcal{P}_{\mathsf{F},\Omega}$ to both sides, we get
\[
(r_{10}-\mathrm{d}_\mathsf{F}^\mathcal{B}B_{\Omega}\gamma_{\partial_{int}\Omega}P_{\Omega_1\setminus\Omega})\mathcal{S}_{\mathsf{F},\Omega_1}r_{21}\mathcal{S}_{\mathsf{F},\Omega_2}GN_\mathsf{F}+\mathcal{P}_{\mathsf{F},\Omega}=\mathrm{Id}+(r_{10}-\mathrm{d}_\mathsf{F}^\mathcal{B}B_{\Omega}\gamma_{\partial_{int}\Omega}P_{\Omega_1\setminus\Omega})K_2.
\]
Using the smallness of $K_2$, we have the invertibility of $\mathrm{Id}+(r_{10}-\mathrm{d}_\mathsf{F}^\mathcal{B}B_{\Omega}\gamma_{\partial_{int}\Omega}P_{\Omega_1\setminus\Omega})K_2$, therefore
\[
(\mathrm{Id}+(r_{10}-\mathrm{d}_\mathsf{F}^\mathcal{B}B_{\Omega}\gamma_{\partial_{int}\Omega}P_{\Omega_1\setminus\Omega})K_2)^{-1}\left((r_{10}-\mathrm{d}_\mathsf{F}^\mathcal{B}B_{\Omega}\gamma_{\partial_{int}\Omega}P_{\Omega_1\setminus\Omega})\mathcal{S}_{\mathsf{F},\Omega_1}r_{21}\mathcal{S}_{\mathsf{F},\Omega_2}GN_\mathsf{F}+\mathcal{P}_{\mathsf{F},\Omega}\right)=\mathrm{Id}.
\]
Applying $\mathcal{S}_{\mathsf{F},\Omega}$ from the right yields
\begin{equation}\label{finalreconstructionformula}
\mathcal{S}_{\mathsf{F},\Omega}=(\mathrm{Id}+(r_{10}-\mathrm{d}_\mathsf{F}^\mathcal{B}B_{\Omega}\gamma_{\partial_{int}\Omega}P_{\Omega_1\setminus\Omega})K_2)^{-1}(r_{10}-\mathrm{d}_\mathsf{F}^\mathcal{B}B_{\Omega}\gamma_{\partial_{int}\Omega}P_{\Omega_1\setminus\Omega})\mathcal{S}_{\mathsf{F},\Omega_1}r_{21}\mathcal{S}_{\mathsf{F},\Omega_2}GN_\mathsf{F},
\end{equation}
where we have used the fact that $N_\mathsf{F}=N_\mathsf{F}\mathcal{S}_{\mathsf{F},\Omega}$ and $\mathcal{P}_{\mathsf{F},\Omega}\mathcal{S}_{\mathsf{F},\Omega}=0$.

Now assume $f_\mathsf{F}=e^{-\mathsf{F}/x}f$ satisfies the gauge condition $\delta_\mathsf{F}f_\mathsf{F}=0$. If $L_{1,1}f=0$, then $N_\mathsf{F}f_\mathsf{F}=0$, we have
\[
f_\mathsf{F}=\mathcal{S}_{\mathsf{F},\Omega}f_\mathsf{F}=0
\]
by applying \eqref{finalreconstructionformula} to $f_\mathsf{F}$.
This shows the $s$-injectivity of $L_{1,1}$ within the $e^{-2\mathsf{F}/x}$-solenoidal gauge. This means that 
\[
f=\mathrm{d}^\mathcal{B}v,
\]
for a $1$-tensor $v$ with $v\vert_{\partial M\cap X}=0$.

This gives
\begin{theorem}\label{localuniqueness11}
Assume $\partial M$ is strictly convex at $p\in\partial M$. There exists a function $\tilde{x}\in C^\infty(\widetilde{M})$ with $O_p=\{\tilde{x}>-c\}\cap M$ for sufficiently small $c>0$, such that for a given trace-free $(1,1)$-tensor $f$, there exists a $1$-tensor $v$ with $v\vert_{O_p\cap\partial M}=0$ such that $f-\mathrm{d}^\mathcal{B}v$ can be uniquely determined by $L_{1,1}f$ restricted to $O_p$-local geodesics.
\end{theorem}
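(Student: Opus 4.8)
The plan is to read the statement off the reconstruction formula \eqref{finalreconstructionformula}, whose ingredients have all been assembled in this section: the ellipticity of $N_\mathsf{F}$ at fiber and base infinity on the kernel of $\sigma(\delta^\mathcal{B}_\mathsf{F})$, the invertibility of $-\Delta^\mathcal{B}_\mathsf{F}$ on the nested regions $\Omega\subset\Omega_1\subset\Omega_2$ (Lemma \ref{lemma7}), the Poincar\'e estimate for $\mathrm{d}^\mathcal{B}_\mathsf{F}$, and the smoothing, small remainder $K_2$. First I would fix the boundary point $p$, choose $\tilde x\in C^\infty(\widetilde M)$ with strictly concave level sets from the superlevel side as in Section \ref{section2}, and set $\Omega=O_p\cap X=\{\tilde x>-c\}\cap M$ for $c>0$ small; the strict concavity guarantees that every geodesic meeting $\Omega$ and remaining in $M$ stays in $X$, so $N_\mathsf{F}=e^{-\mathsf{F}/x}LL_{1,1}e^{\mathsf{F}/x}$ is genuinely determined by the restriction of $L_{1,1}f$ to $O_p$-local geodesics. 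Then $\mathsf{F}>0$ is taken large enough for all preceding lemmas to hold simultaneously and $c$ small enough that $K_2$ has operator norm $<1$ on the weighted space $H^{0,r}_{sc}(\Omega)$ with $r\le-(n-3)/2$.

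Next I would perform the gauge reduction. Given a trace-free $(1,1)$-tensor $f$ on $\Omega$, set $f_\mathsf{F}=e^{-\mathsf{F}/x}f$ and split $f_\mathsf{F}=\mathcal{S}_{\mathsf{F},\Omega}f_\mathsf{F}+\mathcal{P}_{\mathsf{F},\Omega}f_\mathsf{F}=f_\mathsf{F}^s+\mathrm{d}^\mathcal{B}_\mathsf{F}v_\mathsf{F}$, where $v_\mathsf{F}=(\Delta^\mathcal{B}_{\mathsf{F},\Omega})^{-1}\delta^\mathcal{B}_\mathsf{F}f_\mathsf{F}$ solves the Dirichlet problem on $\Omega$ with zero value on $\partial_{int}\Omega\subset\partial M$, which is legitimate by Lemma \ref{lemma7}. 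Conjugating back gives $f=f^s+\mathrm{d}^\mathcal{B}v$ with $v=e^{\mathsf{F}/x}v_\mathsf{F}$ and $v|_{O_p\cap\partial M}=0$; since $\mathrm{d}^\mathcal{B}v$ lies in the kernel of $L_{1,1}$, we have $L_{1,1}f^s=L_{1,1}f$, and $f^s=f-\mathrm{d}^\mathcal{B}v$ is exactly the quantity we claim to recover.

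For the reconstruction itself, note $\delta^\mathcal{B}_\mathsf{F}f_\mathsf{F}^s=0$ and $N_\mathsf{F}\mathrm{d}^\mathcal{B}_\mathsf{F}=0$, so $N_\mathsf{F}f_\mathsf{F}=N_\mathsf{F}f_\mathsf{F}^s$, which is computed from $L_{1,1}f$ on $O_p$-local geodesics. Feeding $f_\mathsf{F}^s$ into \eqref{finalreconstructionformula} and using $N_\mathsf{F}=N_\mathsf{F}\mathcal{S}_{\mathsf{F},\Omega}$ together with $\mathcal{S}_{\mathsf{F},\Omega}f_\mathsf{F}^s=f_\mathsf{F}^s$ yields
\[
f_\mathsf{F}^s=(\mathrm{Id}+(r_{10}-\mathrm{d}_\mathsf{F}^\mathcal{B}B_{\Omega}\gamma_{\partial_{int}\Omega}P_{\Omega_1\setminus\Omega})K_2)^{-1}(r_{10}-\mathrm{d}_\mathsf{F}^\mathcal{B}B_{\Omega}\gamma_{\partial_{int}\Omega}P_{\Omega_1\setminus\Omega})\mathcal{S}_{\mathsf{F},\Omega_1}r_{21}\mathcal{S}_{\mathsf{F},\Omega_2}G\,(N_\mathsf{F}f_\mathsf{F}),
\]
so $f-\mathrm{d}^\mathcal{B}v=e^{\mathsf{F}/x}f_\mathsf{F}^s$ is uniquely and constructively determined by $L_{1,1}f$ restricted to $O_p$-local geodesics; in particular $L_{1,1}f=0$ forces $f_\mathsf{F}^s=0$, i.e. $f=\mathrm{d}^\mathcal{B}v$, the $s$-injectivity statement.

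The step I expect to demand the most care is not a single estimate but the bookkeeping across the nested regions $\Omega\subset\Omega_1\subset\Omega_2$: one must verify that the parametrix $G$ of the modified operator $A_\mathsf{F}=N_\mathsf{F}+\mathrm{d}^\mathcal{B}_\mathsf{F}M\delta^\mathcal{B}_\mathsf{F}$ can be chosen with $\mathrm{WF}_{sc}'(E)$ disjoint from $\Omega_1$ and $E=-\mathrm{Id}$ near $\partial_{int}\Omega_2$, that the Poisson operators $B_{\Omega_j}$ and the left inverse $P_{\Omega_1\setminus\Omega}$ interact correctly with the restriction and extension maps $r_{ij}$, $e_{ij}$, and crucially that $K_2$ is smoothing and small — which is where the thinness of the layer (small $c$) and the sufficiently negative weight $r\le-(n-3)/2$ enter, exactly as in \cite[Lemmas 4.10, 4.11, 4.13]{stefanov2018inverting}. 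Once these structural facts are in place, the proof of the theorem is a short assembly of the displayed identity.
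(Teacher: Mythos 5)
Your proposal is correct and follows essentially the same route as the paper: you feed the solenoidal projection $\mathcal{S}_{\mathsf{F},\Omega}f_\mathsf{F}$ into the reconstruction identity \eqref{finalreconstructionformula}, using $N_\mathsf{F}=N_\mathsf{F}\mathcal{S}_{\mathsf{F},\Omega}$ to see that the right-hand side is determined by $L_{1,1}f$ on $O_p$-local geodesics. The paper compresses the gauge-decomposition step by simply assuming $\delta_\mathsf{F}f_\mathsf{F}=0$ at the outset, but the underlying argument — ellipticity of $N_\mathsf{F}$ modulo the gauge, invertibility of $\Delta^\mathcal{B}_\mathsf{F}$ across the nested domains, smallness of $K_2$, and the identity \eqref{finalreconstructionformula} — is exactly the one you assemble.
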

Using the above local result, the global $s$-injectivity can be proved as in \cite{paternain2019geodesic}.
\begin{theorem}
Assume $\partial M$ is strictly convex and $(M,g)$ admits a smooth strictly convex function $\rho$. If
\[
L_{1,1}f(\gamma,\eta)=0
\]
for any geodesic $\gamma$ in $M$ with endpoints on $\partial M$, and $\eta$ a parallel vector field conormal to $\gamma$. Then 
\[
f=\mathrm{d}^\mathcal{B}v,\quad v\vert_{\partial M}=0.
\]
\end{theorem}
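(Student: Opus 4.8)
The plan is to reduce to a globally solenoidal tensor and then run a layer stripping argument along the foliation furnished by the strictly convex function $\rho$, using Theorem~\ref{localuniqueness11} as the inductive input. Since tensors in $\mathrm{Im}\,\lambda$ are annihilated by $L_{1,1}$, one may assume $f\in\mathcal{B}S^1_1TM$ is trace-free. The operator $-\Delta^\mathcal{B}=\delta^\mathcal{B}\mathrm{d}^\mathcal{B}$ on $(1,0)$-tensors is second order elliptic (its principal part dominates $\frac{n-2}{n}|\xi|^2\,\mathrm{Id}$ by the decomposition used above, and $n\ge3$), is formally self-adjoint with $\langle-\Delta^\mathcal{B}v,v\rangle=\|\mathrm{d}^\mathcal{B}v\|^2$, and has trivial Dirichlet kernel: if $\mathrm{d}^\mathcal{B}v=0$ and $v|_{\partial M}=0$, then \eqref{id_integralmixed} forces the $\eta$-contraction of $v$ to vanish along every geodesic issued from $\partial M$, and letting the geodesic and the conormal field $\eta$ vary gives $v\equiv0$ (there is no trapping, $M$ admitting a strictly convex function). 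Hence $\Delta^\mathcal{B}v=\delta^\mathcal{B}f$, $v|_{\partial M}=0$ has a unique solution $v\in C^\infty(S^1TM)$; with $h:=f-\mathrm{d}^\mathcal{B}v$ one has $\delta^\mathcal{B}h=0$ and $L_{1,1}h=0$, and it suffices to prove $h\equiv0$, which then yields $f=\mathrm{d}^\mathcal{B}v$ with $v|_{\partial M}=0$.

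By the foliation condition the level sets $\Sigma_\tau=\rho^{-1}(\tau)$ are strictly convex hypersurfaces foliating $M$ off the lower-dimensional set $\rho^{-1}(\tau_0)$, $\tau_0=\min\rho$, the sublevel sets $\{\rho\le\tau\}$ are geodesically convex, and $\partial M$ (strictly convex) lies in $\{\rho=\tau_1\}$, $\tau_1=\max\rho$. Let $A=\{\tau\in(\tau_0,\tau_1]:h\equiv0\text{ on }\{\rho>\tau\}\}$; since $\{\rho>\tau\}=\bigcup_{\tau'>\tau}\{\rho>\tau'\}$, $A$ is closed, and it suffices to show $A=(\tau_0,\tau_1]$, for then $h$ vanishes on the dense open set $\{\rho>\tau_0\}$, hence everywhere by continuity. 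For the base case I apply Theorem~\ref{localuniqueness11} at each $p\in\partial M$: on the associated lens $O_p$ one decomposes $h=h_\sharp+\mathrm{d}^\mathcal{B}w$ in the $e^{-2\mathsf{F}/x}$-solenoidal gauge on $O_p$, with $h_\sharp$ solenoidal and $w$ vanishing on $\partial O_p$ (solvable via the invertibility of $\Delta^\mathcal{B}_\mathsf{F}$ in Lemma~\ref{lemma7}). Then $L_{1,1}h_\sharp=L_{1,1}h=0$, so the reconstruction \eqref{finalreconstructionformula} gives $h_\sharp=0$ on $O_p$ for $O_p$ small, hence $h=\mathrm{d}^\mathcal{B}w$ on $O_p$; now $\delta^\mathcal{B}h=0$ gives $\Delta^\mathcal{B}w=0$ with $w|_{\partial O_p}=0$, so $\|\mathrm{d}^\mathcal{B}w\|^2=\langle-\Delta^\mathcal{B}w,w\rangle=0$ and then $w\equiv0$ by \eqref{id_integralmixed}, i.e. $h\equiv0$ near $p$. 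Compactness of $\partial M$ gives $h\equiv0$ on $\{\rho>\tau_1-\epsilon\}$ for some $\epsilon>0$.

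The inductive step is the analogue with the interior leaf $\Sigma_\tau$ replacing $\partial M$: let $\tau\in A$, $\tau>\tau_0$. For $q\in\Sigma_\tau$, take a small cap $C_q\subset\{\rho\le\tau\}$ meeting $\Sigma_\tau$ at $q$, so small that every $C_q$-local geodesic, extended to a maximal geodesic of $M$, meets $\{\rho\le\tau\}$ only inside $C_q$ — possible by strict convexity of $\Sigma_\tau$ and geodesic convexity of $\{\rho\le\tau\}$. Since $h\equiv0$ on $\{\rho>\tau\}$, the $C_q$-local mixed ray transform of $h$ equals $L_{1,1}h$ along the corresponding maximal geodesics and so vanishes; Theorem~\ref{localuniqueness11}, whose proof only uses strict convexity of a hypersurface and works on the chosen side of it, applies verbatim in $C_q$, and the same re-gauging and Dirichlet argument as in the base case gives $h\equiv0$ on $C_q$. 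Finitely many such caps cover a one-sided neighborhood of $\Sigma_\tau$, so $\tau-\epsilon\in A$ for some $\epsilon>0$. Thus $A$ is closed, contains $(\tau_1-\epsilon,\tau_1]$, and is relatively open to the left in $(\tau_0,\tau_1]$, whence $A=(\tau_0,\tau_1]$ and $h\equiv0$ on $M$.

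The technical heart — carried out exactly as in \cite{UV,stefanov2018inverting,paternain2019geodesic} — is twofold. First, the local statement Theorem~\ref{localuniqueness11}, formulated for $p\in\partial M$, must be applied uniformly along the leaves $\Sigma_\tau$, with the size of the caps $C_q$ bounded below as $q$ and $\tau$ vary; this follows from compactness together with the smooth dependence of the scattering-calculus construction on the base point. Second, at every step one must pass from the conclusion of Theorem~\ref{localuniqueness11}, which only determines the tensor modulo a potential field, to genuine vanishing of $h$: this relies on the re-gauging above, whose key point is that the new potential $w$ vanishes on the \emph{entire} boundary of the lens (both the real face and the scattering face at $x=0$, the latter automatically because of the weight $e^{-\mathsf{F}/x}$), so that the global gauge condition $\delta^\mathcal{B}h=0$ can be fed back via the Dirichlet problem for $\Delta^\mathcal{B}$ and the Poincar\'e-type identity \eqref{id_integralmixed}.
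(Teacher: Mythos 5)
Your proposal takes a genuinely different route from the paper's, and it contains a real gap.

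The paper does not perform a global gauge reduction at all. Instead, it applies Theorem~\ref{localuniqueness11} directly to $f$ at each $p\in\rho^{-1}(b)$ to obtain local potentials $f=\mathrm{d}^{\mathcal B}v_p$ with $v_p|_{O_p\cap\partial M}=0$, shows via the Fundamental-Theorem-of-Calculus identity \eqref{id_integralmixed} (an ODE along geodesics issued from $\partial M$, with the initial condition given by the vanishing of $v_p$ on $\partial M$) that the $v_p$'s agree on overlaps, patches them into a single $v$ on a collar $\rho^{-1}(b-\varepsilon,b)$, and then continues inward by invoking \cite[Proposition~6.2, Theorem~1.6]{paternain2019geodesic}. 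No global solenoidal projection and no Dirichlet energy identity is used.

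Your strategy instead first produces a globally $g$-solenoidal $h=f-\mathrm{d}^{\mathcal B}v$ and then tries to show $h\equiv 0$ by a local re-gauging. The problem is in the step where you claim the local potential $w$ vanishes on the \emph{entire} boundary of the lens $O_p$, so that $\langle -\Delta^{\mathcal B}w,w\rangle=\|\mathrm{d}^{\mathcal B}w\|^2$. The local decomposition coming from Lemma~\ref{lemma7} is carried out for $h_{\mathsf F}=e^{-\mathsf F/x}h$, yielding $h_{\mathsf F}=\mathcal S_{\mathsf F,\Omega}h_{\mathsf F}+\mathrm d^{\mathcal B}_{\mathsf F}w_{\mathsf F}$ with $w_{\mathsf F}\in\dot H^{1,0}_{sc}(\Omega)$; this gives $w_{\mathsf F}=0$ on $\partial_{int}\Omega=O_p\cap\partial M$ but only scattering-type (polynomial in $x$) decay at the artificial face $\{x=0\}=\{\tilde x=-c\}$. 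After un-conjugating, $w=e^{\mathsf F/x}w_{\mathsf F}$, and since $e^{\mathsf F/x}\to+\infty$ as $x\to 0^+$, the weight you appeal to points in the wrong direction: it does not force $w$ to vanish at the artificial boundary (it can even make $w$ blow up there). Thus $\Delta^{\mathcal B}w=0$ with only \emph{partial} Dirichlet data, the integration by parts produces uncontrolled boundary terms on the artificial face, and you cannot conclude $\mathrm d^{\mathcal B}w=0$ on $O_p$. The same problem recurs at every inductive step on $\Sigma_\tau$. This is precisely the obstruction the paper avoids by working directly with the local potentials and the transport identity \eqref{id_integralmixed} — where the boundary data needed is only on $\partial M$ (resp.\ $\Sigma_\tau$), the side from which the geodesics are issued — rather than passing through a global gauge and a Dirichlet energy estimate on the truncated lens.

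A secondary point: your global reduction relies on unique solvability of the Dirichlet problem for $-\Delta^{\mathcal B}=\delta^{\mathcal B}\mathrm d^{\mathcal B}$ w.r.t.\ $g$ on all of $M$; the paper only establishes ellipticity and invertibility for the conjugated $\Delta^{\mathcal B}_{\mathsf F}$ built from $g_{sc}$, so this step would need its own justification. That is a minor issue compared to the artificial-boundary problem above, but it should not be waved away.
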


\begin{proof}[Sketch of proof]
Assume $b=\sup_Mf$.
By the local uniqueness result Theorem \ref{localuniqueness11}, for any $p\in \rho^{-1}(b)\subset M$, there exists a wedge-shaped neighborhood $O_p$ in $M$ and a vector field $v_p$ satisfying
\[
f=\mathrm{d}^\mathcal{B}v_p,\quad v_p\vert_{O_p\cap\partial M}=0.
\]
Choose an arbitrary point $z\in O_p$, one can choose a geodesic $\gamma_{z,v}$ connecting $z$ to $O_p\cap \partial M$, where $\gamma_{z,v}(t)=z,\dot{\gamma}_{z,v}=v$. Choose $\eta(t)$ to be a parallel covector field on $\gamma_z$ such that $\eta(0)=\eta$ and $\eta(t)$ is conormal to $\dot{\gamma}(t)$. Then we have
\[
\langle f(\gamma(t)), \eta(t)\rangle=X\langle v_p(\gamma(t)),\eta(t)\rangle, \quad \langle v_p(\gamma(t)),\eta(t)\rangle\vert_{\tau(z,v)}=0,
\]
where $X$ is the vector field of the geodesic $\gamma_{z,v}$ and $\tau(z,v)$ is the exit time of $\gamma_{z,v}$.

By compactness of $\rho^{-1}(b)$, we can choose a finite cover $O_{p_j}$, $j=1,\cdots, m$, for some $p_1,\cdots,p_m\in\rho^{-1}(b)$ such that
\[
\rho^{-1}(b)\subset \cup_{j=1}^mO_{p_j},
\]
and
\[
\rho^{-1}{(b-\varepsilon,b)}\subset \cup_{j=1}^mO_{p_j}
\]
for some $\varepsilon>0$.
If $z\in O_{p_j}\cap O_{p_k}\neq \emptyset$, we have
\[
X\langle (v_{p_j}-v_{p_k})(\gamma(t)),\eta(t)\rangle=0\text{ in }O_{p_j}\cap O_{p_k},\quad \langle (v_{p_j}-v_{p_k})(\gamma(t)),\eta(t)\rangle\vert_{\tau(z,v)}=0.
\]
By the standard ODE theory, we have
\[
\langle(v_{p_j}-v_{p_k})(z),\eta\rangle=0.
\]
Then by perturbing $v$ and choosing different $\eta$'s, we can conclude that
\[
(v_{p_j}-v_{p_k})(z)=0.
\]
Therefore there exists $v\in \rho^{-1}{(b-\varepsilon,b)}$ such that
\[
f=\mathrm{d}^\mathcal{B}v,\quad v\vert_{\rho^{-1}{(b-\varepsilon,b)}\cap\partial M}=0.
\]

Using similar proof as for \cite[Proposition 6.2]{paternain2019geodesic}, one can show that
\[
f=\mathrm{d}^\mathcal{B}v\text{ in }M_{>a}
\]
for some $v\vert_{\partial M}=0$. Here $M_{>a}:=\rho^{-1}{(a,+\infty)}$ with $a=\inf_M f$. Dealing with the case $M_{>a}=M\setminus\{x_0\}$ as in the proof of \cite[Theorem 1.6]{paternain2019geodesic}, one can finish the proof of the theorem.
\end{proof}

\bibliographystyle{abbrv}
\bibliography{biblio}
\end{document}